\newtheorem{dummy}{dummy}[section]
\newtheorem{lemma}[dummy]{Lemma}
\newtheorem{theorem}[dummy]{Theorem}
\newtheorem{corollary}[dummy]{Corollary}
\newtheorem{proposition}[dummy]{Proposition}
\theoremstyle{definition}
\newtheorem{definition}[dummy]{Definition}
\newtheorem{example}[dummy]{Example}
\newtheorem{remark}[dummy]{Remark}
\newcommand{\bA}{\mathbb{A}}
\newcommand{\bC}{\mathbb{C}}
\newcommand{\bP}{\mathbb{P}}
\newcommand{\bQ}{\mathbb{Q}}
\newcommand{\bR}{\mathbb{R}}
\newcommand{\bZ}{\mathbb{Z}}
\newcommand{\bfR}{\mathbf{R}}
\newcommand{\cE}{\mathcal{E}}
\newcommand{\cF}{\mathcal{F}}
\newcommand{\cH}{\mathcal{H}}
\newcommand{\cL}{\mathcal{L}}
\newcommand{\cM}{\mathcal{M}}
\newcommand{\cP}{\mathcal{P}}
\newcommand{\cQ}{\mathcal{Q}}
\newcommand{\cO}{\mathcal{O}}
\newcommand{\cT}{\mathcal{T}}
\newcommand{\sfR}{\mathbf{k}}
\newcommand{\Hom}{\mathrm{Hom}}
\newcommand{\can}{{\mathrm{can}} }
\newcommand{\bGamma}{\mathbf{\Gamma}}
\newcommand{\Ga}{\Gamma}
\newcommand{\LS}{ {\Lambda_\Sigma} }
\newcommand{\Spec}{\mathrm{Spec}\,}
\newcommand{\Ext}{\mathrm{Ext}}
\newcommand{\Sh}{\mathit{Sh}}
\newcommand{\Perf}{\cP\mathrm{erf}}
\newcommand{\FT}{\cF\cT}
\renewcommand{\SS}{\mathit{SS}}
\newcommand{\ltr}{\langle \Theta \rangle}
\newcommand{\ltrp}{\langle\Theta'\rangle}
\newcommand{\Gm}{\mathbb{G}_{\mathrm{m}}}
\newcommand{\vC}{\check{\mathrm{C}}}
\newcommand{\RGa}{\mathbf{R}\Gamma}
\newcommand{\Rmod}{\mathsf{R}\text{-}\mathrm{mod}}
\newcommand{\codim}{\mathrm{codim}}
\newcommand{\Mo}{\mathrm{M}}
\newcommand{\Moi}{\mathit{mo}}
\newcommand{\PolF}{\mathrm{PolF}}
\numberwithin{equation}{section}
\begin{document}

\title{Morse theory and toric vector bundles}
\author{David Treumann}

\begin{abstract}
Morelli's computation of the $K$-theory of a toric variety $X$ associates a polyhedrally constructible function on a real vector space to every equivariant vector bundle $\cE$ on $X$.  The coherent-constructible correspondence lifts Morelli's constructible function to a complex of constructible sheaves $\kappa(\cE)$.  We show that certain filtrations of the cohomology of $\kappa(\cE)$ coming from Morse theory coincide with the Klyachko filtrations of the generic stalk of $\cE$.  We give Morse-theoretic (i.e. microlocal) conditions for a complex of constructible sheaves to correspond to a vector bundle, and to a nef vector bundle.
\end{abstract}

\maketitle


\section{Introduction}

Let $X$ be an $n$-dimensional toric variety, and write $T$ for the algebraic torus that acts on $X$.  In \cite{Mo}, Morelli constructed an injective homomorphism from the Grothendieck group $K_T(X)$ of equivariant vector bundles on $X$ to the group of polyhedrally-constructible functions on an $n$-dimensional real vector space $M_\bR$.  Morelli's homomorphism provided an interesting computation of the $K$-theory of $X$, and also suggested a framework for studying the customary relationship between toric and polyhedral geometry: the algebro-geometric aspects of $X$ that are visible in its $K$-theory will have some expression in terms of constructible functions, and a lot of polyhedral geometry can be profitably encoded in terms of such functions \cite{Mo2, KP, McM}.

There is a rich theory of constructible functions \cite{GM, BDK} and a close analogy between such functions and constructible sheaves.  The most powerful tool in this theory is a kind of Morse theory, which makes certain features of constructible functions and sheaves visible ``microlocally'' i.e. in the cotangent bundle.  The purpose of this paper is to explore the relevance of these tools to Morelli's map and polyhedra, a connection which seems to have been unnoticed before our paper \cite{fltz}. 

We showed in \cite{fltz} that Morelli's isomorphism lifts to a categorical equivalence, called the ``coherent-constructible correspondence'' or CCC.  The equivalence (which we review here in Section \ref{sec:CCCrev}) matches bounded complexes of vector bundles $\cE^\bullet$ on $X$ with bounded cochain complexes of constructible sheaves $\kappa(\cE^\bullet)$ on $M_\bR$.  In this paper, we investigate which complexes of constructible sheaves come from genuine vector bundles, regarded as complexes concentrated in a single degree.  The answer is expressed nicely in terms of Morse theory, and interacts well with a classification of equivariant vector bundles given by Klyachko.

\subsection{Vector bundles and convex sheaves}

We introduce here a class of sheaves real vector spaces that we call \emph{convex}.  Our first main result is Theorem \ref{thm:1.1} below, which states that a sheaf on $M_\bR$ is matched to a vector bundle on $X$ under the CCC if and only if the sheaf is convex.  Recall that each complex of sheaves $F^\bullet$ on $M_\bR$ leads to a family of functors $U \mapsto H^i_c(U;F^\bullet)$, the ``compactly supported cohomology of $U$ with coefficients in $F^\bullet$'', and that these functors are covariant for open inclusions.  

\begin{definition}
Let $F^\bullet$ be a complex of sheaves on a real vector space $M_\bR$.  We say that $F^\bullet$ is \emph{convex} if the following hold:
\begin{enumerate}
\item For any linear function $f:M_\bR \to \bR$ and any $k \in \bR$, the natural map
$$H^0_c(f^{-1}(-\infty,k);F^\bullet) \to H^0_c(M_\bR;F^\bullet)$$
is injective
\item For $f$ and $k$ as in (1) the groups $H^i(M_\bR;F^\bullet)$ and $H^i(f^{-1}(-\infty,k);F^\bullet)$ vanish when $i \neq 0$.
\end{enumerate}
\end{definition}

\begin{example}
Let $F$ be the constant sheaf on an open subset $U \subset M_\bR$ extended by zero to all of $M_\bR$, and placed in cohomological degree $n = -\dim(M_\bR)$.  Then $H^i_c(M_\bR;F) \cong H^{i+n}_c(U;\bC)$ and $H^i_c(f^{-1}(-\infty,k);F) \cong H^{i+n}_c(f^{-1}(-\infty,k) \cap U;\bC)$.  
\begin{enumerate}
\item If $U$ is convex then $H^i_c(U;\bC) = 0$ for $i \neq n$, whereas $f^{-1}(-\infty,k) \cap Y$ is either empty or another convex open subset of dimension $n$ so that $H^i_c(U;\bC) = 0$ for $i \neq n$ and we have an injection $H^n_c(U;\bC) \to H^n_c(U;\bC)$.  It follows that $F$ is convex. 

\item If $U = \{(x,y) \mid y > 0 \text{ and } 1 < x^2 + y^2 < 2\}$ is an open half-annulus in $M_\bR = \bR^2$, then $F$ can be seen to be nonconvex by considering the linear function $(x,y) \mapsto y$

\end{enumerate}

Note that if $U$ is the disjoint union of two convex open subsets then $F$ is convex, so that convexity of $U$ is not necessary for convexity of $F$.  More complicated examples of convex sheaves appear in Section \ref{subsec:introexamples}.
\end{example}

\begin{theorem}
\label{thm:1.1}
Let $X$ be a toric variety, let $\cE^\bullet$ be a bounded complex of equivariant vector bundles on $X$, and let $\kappa(\cE^\bullet)$ be the corresponding constructible complex on $M_\bR$.  Then $\kappa(\cE^\bullet)$ is convex if and only if $\cE^\bullet$ is quasi-isomorphic to a vector bundle concentrated in degree zero.
\end{theorem}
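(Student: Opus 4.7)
The plan is to exploit the Klyachko classification of equivariant vector bundles on $X$ together with the paper's promised identification of Morse-theoretic filtrations on $H^*(\kappa(\cE))$ with Klyachko filtrations of the generic stalk. Recall that Klyachko data consists of a finite-dimensional vector space $E$ together with, for each ray $\rho$ of the fan $\Sigma$, a decreasing filtration $E^\rho(k) \subseteq E$ by subspaces, satisfying a compatibility on each cone. A bounded complex of equivariant vector bundles upgrades this to a complex $E^\bullet$ with filtrations by subcomplexes $E^{\rho,\bullet}(k)$.

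The first step is to reinterpret the two groups in the definition of convexity in Klyachko terms. Given a linear function $f: M_\bR \to \bR$ with primitive normal direction $v \in N_\bR$, I expect to identify $H^*_c(M_\bR; \kappa(\cE^\bullet))$ with the generic stalk complex $E^\bullet$, and $H^*_c(f^{-1}(-\infty,k); \kappa(\cE^\bullet))$ with the Klyachko piece $E^{v,\bullet}(\lceil k \rceil)$ at $v$ and weight $\lceil k \rceil$, in such a way that the natural map $H^*_c(f^{-1}(-\infty,k); \kappa(\cE^\bullet)) \to H^*_c(M_\bR; \kappa(\cE^\bullet))$ corresponds to the tautological inclusion of the filtration piece. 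This identification should fall out of standard Morse theory for constructible sheaves applied to the explicit description of $\kappa$ provided by the CCC (Section \ref{sec:CCCrev}).

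Granting this identification, both directions of the theorem are essentially formal. For the forward direction, if $\cE$ is an honest vector bundle in degree zero, Klyachko's theorem says the $E^v(k)$ are honest subspaces of $E$, so condition (1) (injectivity) is immediate and condition (2) (higher vanishing) holds because $E$ and each $E^v(k)$ are vector spaces in degree zero. For the reverse direction, condition (2) forces the cohomology of $E^\bullet$ and of each $E^{v,\bullet}(k)$ to be concentrated in degree zero, and condition (1) forces each resulting $H^0$ filtration piece to inject into $H^0(E^\bullet)$ as a subspace. As $f$ and $k$ vary, $v$ ranges over all primitive ray directions and $\lceil k \rceil$ over all integers, so the cohomology of $\cE^\bullet$ carries Klyachko data of a genuine equivariant vector bundle $\cE'$; by the CCC this forces $\cE^\bullet \simeq \cE'[0]$.

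The main obstacle is making the identification in the second paragraph rigorous at the derived level, with control over the natural inclusion map. For a single vector bundle this is essentially the Morse-theoretic interpretation of Klyachko advertised in the introduction, but for a bounded complex one must track functoriality of the identification so that the Morse-theoretic map genuinely realizes the filtration inclusion on cohomology. A second subtle point is that the convexity axioms must automatically recover the cone compatibility conditions of Klyachko at higher-dimensional cones, not just the filtrations along individual rays; I expect this to follow from the behavior of compactly supported cohomology as $f$ varies continuously through a family of linear functions whose normals trace out a cone.
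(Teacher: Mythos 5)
The paper's own proof does not go through Klyachko's classification at all. It argues directly: the vanishing $H^i_c(M_\bR;\kappa(\cE))=0$ for $i\neq 0$ is already Theorem \ref{thm:fiber} (the generic stalk $\cE_{x_0}$ is concentrated in degree zero); for the Morse filtration, the key step is Lemma \ref{lem:morsetheta}, which says that when $f\in\tau^\circ$ the filtration $\RGa_{c,f<t}(\kappa(\cE))$ agrees with the one for the restriction $\cE\vert_{U_\tau}$. Since $U_\tau$ is affine, $\cE\vert_{U_\tau}$ splits as a sum of equivariant line bundles $\cO(\chi_i)$, so $\kappa(\cE\vert_{U_\tau})$ becomes an explicit direct sum of $\Theta(\tau,\chi_i)[\dim M_\bR]$'s, and Proposition \ref{prop:morsetheta} computes the filtration pieces directly. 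Your route, through Klyachko data and the promised identification of Morse filtrations with Klyachko filtrations, is genuinely different, and as written it has two substantive gaps.

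First, the convexity axioms quantify over \emph{all} linear functionals $f:M_\bR\to\bR$, not only those whose normal direction is a ray of $\Sigma$. Your identification with Klyachko filtrations only says something when $f$ is a ray direction, so your forward direction is silent on the generic $f$ (interior of a higher-dimensional cone). This is precisely what Lemma \ref{lem:morsetheta} is for: it lets you replace $X$ by the affine piece $U_\tau$ with $f\in\tau^\circ$, and the affine splitting takes over from there. Without something like it, you cannot conclude strictness of the Morse filtration for the $f$'s you are not covering. (Relatedly, your claim that ``condition (2) holds because $E^v(k)$ is a vector space in degree zero'' presupposes the very identification $H^*_c(f^{-1}(-\infty,k);\kappa(\cE))\cong E^v(k)$ as complexes concentrated in degree zero; the vanishing is part of what needs to be proved, not a consequence of calling the target a vector space.)

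Second, in the converse direction you produce Klyachko data $(H^0, \{H^0\text{-filtrations}\})$ and invoke Klyachko's theorem to get a genuine bundle $\cE'$, then assert ``by the CCC this forces $\cE^\bullet\simeq\cE'[0]$.'' This last step is not automatic: you need to construct an actual morphism $\cE^\bullet\to\cE'$ (or $\cE'\to\cE^\bullet$) in $D^b(\cQ_T(X))$ inducing your identifications, and check it is a quasi-isomorphism; and you also need the cone compatibility condition (C) of Klyachko, which you acknowledge but do not derive from convexity. You correctly flag both of these as ``subtle points,'' but they are where the real content lies, and as stated the argument does not close them. For comparison, the paper's argument in the forward direction sidesteps Klyachko entirely by reducing to the case where the bundle visibly splits, and Theorem \ref{thm:1.1.5} (your starting point) is proved afterwards by the same Lemma \ref{lem:morsetheta}/Proposition \ref{prop:morsetheta} machinery, so using it as the foundation for Theorem \ref{thm:1.1} would be circular unless you re-prove it independently.
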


\begin{remark}
\label{rem:dependence}
Note that the criterion on $\kappa(\cE^\bullet)$ for $\cE^\bullet$ to be a vector bundle does not depend on the toric variety on which $\cE^\bullet$ is defined.  That is, there is not a notion of ``$X$-convex sheaves on $M_\bR$'' that changes with toric variety $X$.  Part of the reason for this is that (derived) pullback along a birational map $f$ preserves the property of being a vector bundle, and it is proved in \cite{fltz} that $\kappa(\cE^\bullet) \cong \kappa(f^* \cE)$ for all $\cE^\bullet$.  

It would be interesting to have a criterion in terms of $\kappa(\cE^\bullet)$ for $\cE^\bullet$ to be quasi-isomorphic to a coherent sheaf concentrated in degree zero.  When $X$ is smooth, this would be a description of the heart of a $t$-structure on a subcategory of the derived category sheaves on $M_\bR$.  But since being a coherent sheaf is not preserved by derived pullbacks, such a criterion would necessarily depend on $X$.
\end{remark}

Each linear function $M_\bR \to \bR$ equips the global sections of a convex sheaf with an $\bR$-indexed filtration.  We review some of the theory of these ``Morse filtrations'' in Section \ref{sec:morsetheta}.  On the coherent side of the CCC the corresponding filtrations are well-known in toric geometry.  Recall that to a toric variety we associate a fan $\Sigma$ in the dual vector space $N_\bR$ to $M_\bR$.  In \cite{K}, Klyachko showed that to give a toric vector bundle $\cE$ on $X$ is equivalent to giving a vector space $E$ equipped with a compatible family of $\bZ$-indexed filtrations $\{E^\alpha_{\leq k}\}$ where $\alpha$ runs through the rays of $\Sigma$.  

\begin{theorem}
\label{thm:1.1.5}
Let $\cE$ be an equivariant vector bundle on a toric variety $X$ and let $\kappa(\cE)$ be the corresponding constructible complex.  Let $E$ be the stalk of $\cE$ at a generic point.  Then there is a natural isomorphism $H^0(M_\bR;F^\bullet) \cong E$.  If $\alpha \in N_\bR$ generates a ray of $\Sigma$ and $k$ is an integer, then under this natural isomorphism the inclusion
$$H^0_c(\alpha^{-1}(-\infty,k);\kappa(\cE)) \to H_c^0(M_\bR;\kappa(\cE))$$
is compatible with the inclusion of $E^\alpha_{\leq k}$ into $E$.
\end{theorem}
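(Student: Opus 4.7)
The plan is to establish the natural isomorphism $H^0(M_\bR;\kappa(\cE)) \cong E$ first, then reduce the filtration statement to the case of a single equivariant line bundle on a one-ray affine toric variety, where the computation becomes explicit.

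For the identification of global sections, I would exploit that $\cE$ is trivialized over the open dense torus $T \subset X$, so the generic stalk is canonically a vector space $E$. Under CCC, restriction to $T$ should correspond to taking a generic stalk of $\kappa(\cE)$, and this stalk is canonically identified with $E$ from the definition of $\kappa$ in \cite{fltz}. The isomorphism $H^0(M_\bR;\kappa(\cE)) \cong E$ then follows from Theorem \ref{thm:1.1}: convexity forces higher cohomology to vanish and makes global sections inject into a generic stalk.

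Next, I would show that both the Klyachko filtration $E^\alpha_{\leq k}$ and the Morse filtration attached to $\alpha$ are local: each depends only on $\cE$ in a $T$-invariant neighborhood of the toric divisor $D_\alpha$ (resp.\ on $\kappa(\cE)$ over the half-space $\alpha^{-1}(-\infty,k)$). By the functoriality noted in Remark \ref{rem:dependence}---pullback of $\cE$ along the open inclusion $X_\sigma \hookrightarrow X$, where $\sigma$ is the ray $\alpha$ itself, does not change $\kappa$---I reduce to the case $X = X_\sigma = \bA^1 \times (\Gm)^{n-1}$. On such an $X$, Klyachko's theorem implies that every equivariant vector bundle splits as a direct sum of equivariant line bundles $\bigoplus_i \cL_i$, and both filtrations are additive under direct sums, so the problem reduces to a single equivariant line bundle $\cL = \cO(k D_\alpha) \otimes \chi^m$. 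Here $E \cong \bC$, the Klyachko filtration has a single jump at the integer determined by $k$ and $\langle m, v_\alpha\rangle$, and $\kappa(\cL)$ is, up to a degree shift, the constant sheaf extended by zero from an explicit half-space of $M_\bR$. Computing $H^0_c$ of its restriction to $\alpha^{-1}(-\infty,k)$ then directly exhibits the matching jump.

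The main obstacle will be the bookkeeping in the line bundle case: the CCC of \cite{fltz} involves orientation choices, degree shifts, and sign conventions on the fan, and these must be aligned with Klyachko's conventions so that the two filtrations agree on the nose, not merely up to a constant shift or sign reversal. The remaining reductions are essentially formal consequences of the functoriality of CCC, Theorem \ref{thm:1.1}, and Klyachko's classification.
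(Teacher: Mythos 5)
Your overall strategy—identify the global sections, localize near the divisor corresponding to $\alpha$, split into equivariant line bundles on the affine piece, and compute explicitly for a single $\Theta$-sheaf—is exactly the shape of the paper's argument. However, the way you justify the localization step contains a real error.

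You write that ``pullback of $\cE$ along the open inclusion $X_\sigma \hookrightarrow X$, where $\sigma$ is the ray $\alpha$ itself, does not change $\kappa$,'' citing Remark \ref{rem:dependence}. This is false, and the remark you cite does not say it. Remark \ref{rem:dependence} concerns pullback along \emph{proper birational} maps (toric resolutions), and the invariance $\kappa(\cE)\cong\kappa(f^*\cE)$ is a special fact about those; pullback to a $T$-invariant open subvariety $U_\alpha$ changes the fan from $\Sigma$ to the subfan generated by $\alpha$, and correspondingly changes the \v Cech complex and the resulting $\Theta$-complex $\kappa(\cE)$ in an essential way (its support, its singular support, and its compactly-supported cohomology all change). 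What \emph{is} preserved under this restriction is only the Morse filtration of $\RGa_c$ with respect to a covector $f$ lying in the interior of the ray $\alpha$; this is precisely the content of Lemma \ref{lem:morsetheta}, and its proof is not tautological---it uses Proposition \ref{prop:morsetheta} and the observation that $\min_{\sigma^\vee} f = \min_{(\sigma\cap\tau)^\vee} f$ when $f\in\tau^\circ$. Without this lemma (or an equivalent substitute), your reduction to $X=U_\alpha$ does not go through.

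A smaller point: your identification $H^0(M_\bR;\kappa(\cE))\cong E$ as a ``generic stalk of $\kappa(\cE)$'' is the wrong mechanism. For a sheaf with compact support a generic stalk typically vanishes; the correct statement (Theorem \ref{thm:fiber}) is that the fiber $\cE_{x_0}$ over a point in the open torus orbit is naturally quasi-isomorphic to the \emph{compactly supported} hypercohomology $\RGa_c(\kappa(\cE))$, computed term-by-term on a $\Theta$-complex via Proposition \ref{prop:rgaacyc}. You do reach the right conclusion by invoking Theorem \ref{thm:1.1}, but the route through ``generic stalks of $\kappa$'' is not what makes this true. Also note a notational slip: the open subvariety you reduce to is $U_\alpha$ in the paper's notation, not $X_\alpha$ (the latter denotes the closed orbit closure).
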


\subsection{Microlocal stalks}

The microlocal theory of sheaves \cite{KS} associates to each constructible complex of sheaves $F^\bullet$ and each point $x$ of $M_\bR$ a complex of sheaves $\mu_x F^\bullet$ on the contangent space to $M_\bR$ at $x$, which we may identify with $N_\bR$.  This sheaf $\mu_x F^\bullet$ is called the ``microlocalization'' of $F^\bullet$ at $x$, and its stalks are called microlocal stalks.  If $f$ is a smooth function on $M_\bR$ then the microlocal stalks $\mu_{x,df_x} F^\bullet$ are closely related to the Morse theory of $f$ and $F^\bullet$.  The sheaves $F^\bullet$ appearing in the CCC have the property (indeed they are almost characterized by the property) that $\mu_x F^\bullet$ is constructible with respect to the cones in the fan $\Sigma$ associated to $X$.  Such a sheaf can be described combinatorially as
\begin{itemize}
\item A vector space $G_\sigma$ for each cone $\sigma \in \Sigma$.  $G_\sigma$ is the stalk of the sheaf at a point on the interior of $\sigma$.
\item A map $G_\sigma \to G_\tau$ for every pair of cones with $\sigma \subset \tau$, such that all triangles associated to triples of cones $\sigma \subset \tau \subset \upsilon$ commute.
\end{itemize}
A similar description can be obtained for any sheaf constant along the cells of a regular cell complex.  A constructible complex of sheaves can be represented by a chain complex of such data.  Our next result computes the sheaf $\mu_x(\kappa(\cE^\bullet))$ in terms of $\cE^\bullet$.

\begin{theorem}
\label{thm:1.2}
Let $X$ be a toric variety with fan $\Sigma$, let $\cE^\bullet$ be a bounded complex of equivariant vector bundles on $X$ and let $\kappa(\cE^\bullet)$ be the corresponding constructible complex.  Let $\sigma$ be a cone of $\Sigma$, and let $X_\sigma$ denote the closure of the $T$-orbit corresponding to $\sigma$.  Let $x$ be a character of $T$, regarded as a lattice point in $M_\bR$, and let $\xi \in N_\bR$ belong to the interior of $-\sigma$.  The following hold:
\begin{enumerate}
\item We have natural isomorphisms
$$\bfR\Gamma(X_\sigma,\cE^\bullet\vert_{X_\sigma})_x \cong \mu_{x,\xi}(\kappa(\cE^\bullet))$$
where on the left-hand side $(-)_x$ denote the $x$th weight space of the $T$-module $(-)$.
\item If $-\psi$ is a point in the interior of a cone $\tau \supset \sigma$, then under these natural isomorphisms the restriction map $\mu_{x,\xi}(\kappa(\cE^\bullet)) \to \mu_{x,\psi}(\kappa(\cE^\bullet))$ coincides with the restriction map $\bfR\Gamma(X_\sigma,\cE^\bullet\vert_{X_\sigma}) \to \bfR\Gamma(X_\tau,\cE^\bullet_{X_\tau})$
\end{enumerate}
\end{theorem}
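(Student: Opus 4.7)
The plan is to reduce to the case of equivariant line bundles and then match both sides by explicit polyhedral formulas. Both functors $\cE^\bullet \mapsto \mu_{x,\xi}(\kappa(\cE^\bullet))$ and $\cE^\bullet \mapsto \bfR\Gamma(X_\sigma, \cE^\bullet|_{X_\sigma})_x$ are triangulated in $\cE^\bullet$: on the coherent side, restriction to $X_\sigma$, derived global sections, and extraction of the $x$-weight space are all exact, while on the constructible side $\kappa$ is a triangulated equivalence by \cite{fltz} and microlocalization is a triangulated functor. Since the equivariant derived category is generated (under shifts and cones) by line bundles $\cO(D)$ for $T$-Cartier divisors $D$, it suffices to verify the isomorphism on such generators, provided the resulting pointwise isomorphism can be promoted to one natural in $\cE^\bullet$.

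For an equivariant line bundle $\cO(D)$, the sheaf $\kappa(\cO(D))$ is described in \cite{fltz} as a ``standard'' constructible sheaf, essentially the extension by zero of the constant sheaf from an open region $P_D \subset M_\bR$ cut out by the piecewise-linear support function $\varphi_D$. I would compute $\mu_{x,\xi}(\kappa(\cO(D)))$ for $x$ a lattice point and $\xi$ in the interior of $-\sigma$ using the Kashiwara--Schapira formula
$$\mu_{x,\xi}(F) \cong \bigl(\bfR\Gamma_{\{y \,:\, \langle y - x, \xi\rangle \geq 0\}}(F)\bigr)_x.$$
Near $x$, the boundary of $P_D$ is cut out by the rays of $\Sigma$, so this local cohomology reduces to the reduced cohomology of the link of $x$ in $P_D$ intersected with the specified closed half-space, a combinatorial datum depending only on $\sigma$, $x$, and $D$.

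On the coherent side, the $x$-weight space of $\bfR\Gamma(X_\sigma, \cO(D)|_{X_\sigma})$ has a classical polyhedral description in terms of the Cartier data of $D|_{X_\sigma}$: it is one-dimensional when $x$ lies in a certain polytope associated to $D|_{X_\sigma}$, with higher cohomology governed by the reduced cohomology of a spherical complement, by Demazure's theorem. The central input for matching the two sides is the identification of the star of $\sigma$ in $\Sigma$ with the fan defining $X_\sigma$, modulo the sign reversal $\sigma \leftrightarrow -\sigma$ that is intrinsic to the CCC. Part (2) then reduces to the observation that for $\sigma \subset \tau$, both the restriction map $\bfR\Gamma(X_\sigma,-) \to \bfR\Gamma(X_\tau,-)$ and the microlocal generization map $\mu_{x,\xi} \to \mu_{x,\psi}$ are induced by the same inclusion of faces in this combinatorial picture; compatibility with the identification in part (1) is then essentially automatic.

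The main obstacle will be tracking cohomological degree shifts and orientation conventions between the CCC and Kashiwara--Schapira's formalism: specifically, why the direction $-\sigma$ (rather than $\sigma$) is the natural one, how potential shifts by $\mathrm{codim}(\sigma)$ or $\dim(\sigma)$ arising from ``standard'' versus ``costandard'' conventions cancel, and how the $T$-equivariant structure on $\bfR\Gamma(X_\sigma,-)$ matches the natural $M$-grading on the microlocal side coming from the lattice structure on $M_\bR$. A related concern is upgrading the line-bundle computation to a natural isomorphism of functors, which should follow by writing both sides in terms of a common cellular complex on $M_\bR$ indexed by the cones of $\Sigma$ translated to lattice points---the same combinatorial data used in \cite{fltz} to construct $\kappa$ in the first place.
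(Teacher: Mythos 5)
Your proposal takes a genuinely different route from the paper. The paper does not reduce to line bundles; it reduces to the objects $\Theta'(\tau,\chi) = j_{\tau*}\cO_{U_\tau}(\chi)$, which are (typically non-coherent) pushforwards from torus-invariant open subsets. These are the natural generators because under $\kappa$ they correspond to the sheaves $\Theta(\tau,\chi)$---extension by zero of the constant sheaf on the interior of a translated dual cone $(\chi+\tau^\vee)^\circ$---whose microlocal stalks $\mu_{x,-\sigma}$ are computed by hand in Proposition \ref{prop:mutheta} to be $\sfR$ or $0$ depending only on whether $\sigma\subset\tau$ and $x$ lies on the corresponding face of $\chi+\tau^\vee$. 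The matching formula for $\RGa(X_\sigma,\Theta'(\tau,\chi))_x$ is immediate from the definition of $\Theta'$, and naturality is then handled in one line by Proposition \ref{prop:nat}, which asserts that a compatible system of maps on the $\Theta'$-objects assembles uniquely to a natural transformation. Your plan, by contrast, requires the extra step of regenerating $\Perf_T(X)$ from line bundles and then controlling the local geometry of a polytope near arbitrary faces, which is strictly more work.

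There is also a concrete gap in your description of the constructible side for a single line bundle. You assert that $\kappa(\cO(D))$ is ``essentially the extension by zero of the constant sheaf from an open region $P_D$ cut out by the support function.'' This is only correct when $\cO(D)$ is ample (for which $\kappa(\cO(D))$ is the costandard sheaf on the interior of $P_D$) or anti-ample (constant sheaf on the polytope). For line bundles that are neither, $\kappa(\cO(D))$ is a genuine complex with cohomology in several degrees---see Example \ref{ex:hirz}, where the bottom row shows sheaves for non-ample line bundles on the Hirzebruch surface with pieces in two different cohomological degrees. Your Kashiwara--Schapira computation of $\mu_{x,\xi}$ via the ``link of $x$ in $P_D$'' therefore does not apply as stated. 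You could repair this by restricting to ample generators (valid since twists of $\cO$ by ample line bundles generate $\Perf_T(X)$ on a smooth projective toric variety), but you would then need to justify that generating statement and carry out the face/vertex case analysis at $x$; the paper avoids all of this by working with $\Theta'$-objects whose $\kappa$-images are always supported on a single translated cone.
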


\begin{remark}
The microlocal stalk $\mu_{x,0}F^\bullet$ at a zero covector is naturally isomorphic to the ``costalk'' of $F^\bullet$ at $x$.  This is the right-derived functor of the assignment that carries a sheaf to the group of sections that are supported at the single point $x$, and differs from the usual stalk functor by Verdier duality.  Thus a special case of Theorem \ref{thm:1.2} is the formula
$$\bfR\Gamma(X;\cE)_x \cong \mathit{costalk}_x \kappa(\cE^\bullet)$$
The constructible sheaf $\kappa(\cE^\bullet)$ is defined in such a way to make this formula reminiscent of formulas of Demazure (in the case when $\cE$ is a line bundle, \cite{D}) and Klyachko (\cite[Section 4.3]{K}).
\end{remark}

\begin{remark}
Theorem \ref{thm:1.2} together with Grothendieck vanishing imply that constructible sheaves of the form $F^\bullet = \kappa(\cE)$ have the property that $\mu_x(F^\bullet)$ is concentrated in degrees between $0$ and $n$.  It would be interesting to know if this is true for general convex sheaves $F^\bullet$.
\end{remark}

\subsection{Microlocal characterizations of vector bundles and nef vector bundles}

A corollary of Theorem \ref{thm:1.2} is a second (after Theorem \ref{thm:1.1}) characterization of those sheaves that come from equivariant vector bundles, this time in terms of microlocal stalks.  Combining Theorem \ref{thm:1.2} with a result of Hering, Mustata and Payne allows us to give a similar characterization of those sheaves that come from numerically effective vector bundles.

\begin{theorem}
\label{thm:vecbund}
Let $X$ be a proper toric variety with fan $\Sigma$, let $\cE^\bullet$ be a bounded complex of equivariant vector bundles on $X$, and let $\kappa(\cE^\bullet)$ be the corresponding constructible complex.  The following are equivalent:
\begin{enumerate}
\item $\cE^\bullet$ is quasi-isomorphic to a vector bundle concentrated in degree zero
\item For each $\xi \in N_\bR$ that belongs to the interior of a top-dimensional cone of $\Sigma$, and for each lattice point $x \in M_\bZ$, the following holds: the microlocal stalk $\mu_{x,-\xi}\kappa(\cE^\bullet)$ is concentrated in degree zero.
\end{enumerate}
\end{theorem}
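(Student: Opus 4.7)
The plan is to use Theorem \ref{thm:1.2} to translate condition (2) into a statement about fibers of $\cE^\bullet$ at the torus-fixed points of $X$, then apply a derived Nakayama lemma and globalize via equivariance together with the properness of $X$.

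When $\sigma \in \Sigma$ is top-dimensional the orbit closure $X_\sigma$ is the single fixed point $p_\sigma$, and if $\xi$ lies in the interior of $\sigma$ then $-\xi$ lies in the interior of $-\sigma$. Theorem \ref{thm:1.2} therefore yields a natural isomorphism
$$\mu_{x,-\xi}\bigl(\kappa(\cE^\bullet)\bigr) \;\cong\; \bigl(\cE^\bullet|_{p_\sigma}\bigr)_x$$
between the microlocal stalk and the $x$-weight space of the fiber $\cE^\bullet|_{p_\sigma}$, viewed as a bounded complex of finite-dimensional vector spaces. The direction (1) $\Rightarrow$ (2) is then immediate: a vector bundle restricts to a vector space in degree zero, whose weight spaces are likewise concentrated in degree zero.

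For (2) $\Rightarrow$ (1), hypothesis (2) says that every weight space of $\cE^\bullet|_{p_\sigma}$ has cohomology only in degree zero; since the total cohomology is the direct sum of the weight-space cohomologies, $\cE^\bullet|_{p_\sigma} \simeq \cE^\bullet \otimes^{\bL}_{\cO_X} k(p_\sigma)$ is itself acyclic outside degree zero. I would then invoke the derived Nakayama lemma at each local ring $\cO_{X,p_\sigma}$: a bounded complex of finitely generated free modules over a Noetherian local ring is quasi-isomorphic to a free module in degree zero as soon as its reduction modulo the maximal ideal is acyclic outside degree zero. This yields $\cH^i(\cE^\bullet)_{p_\sigma} = 0$ for $i \neq 0$ and $\cH^0(\cE^\bullet)_{p_\sigma}$ free, for every top-dimensional cone $\sigma$.

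To globalize, observe that $\supp \cH^i(\cE^\bullet)$ for $i \neq 0$ and the non-free locus of $\cH^0(\cE^\bullet)$ are $T$-invariant closed subsets of $X$; since $X$ is proper, any nonempty $T$-invariant closed subset contains a fixed point, and by the preceding paragraph none of these subsets contains a fixed point, so each is empty. Hence $\cH^i(\cE^\bullet) = 0$ for $i \neq 0$ and $\cH^0(\cE^\bullet)$ is locally free, i.e.\ $\cE^\bullet$ is quasi-isomorphic to a vector bundle in degree zero. The only delicate step is the derived Nakayama reduction, but this is a standard minimization argument for bounded complexes of finitely generated free modules over a Noetherian local ring.
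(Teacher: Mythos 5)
Your proof is correct, but it takes a genuinely different route from the paper's. The paper reduces first to the case $X$ smooth (by observing that $\kappa$ is unchanged under pullback to a toric resolution of singularities), then works chart by chart on the affine pieces $U_\sigma \cong \bA^n$, and invokes BGG Koszul duality for $\Gm$-equivariant complexes on $\bA^n$ to conclude that a complex is a vector bundle iff its fiber at the origin is concentrated in degree zero. You skip the resolution step entirely: after using Theorem \ref{thm:1.2} to identify the microlocal stalks with weight spaces of derived fibers at fixed points (the same first move as the paper), you apply the non-equivariant derived Nakayama lemma at each local ring $\cO_{X,p_\sigma}$, and then globalize by noting that the loci where $\cE^\bullet$ fails to be a degree-zero vector bundle are $T$-invariant and closed, hence (by the Borel fixed point theorem applied to the proper $T$-scheme they define) would have to contain a fixed point if nonempty. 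The two local arguments are morally equivalent --- Koszul duality over $\bA^n$ is a graded refinement of minimization over a local ring --- but your version avoids both the smoothness reduction and the equivariance hypothesis in the local step, replacing the "cover by $\bA^n$-charts" globalization with the fixed-point theorem. This is a real simplification, and it keeps the argument self-contained at the price of citing Borel's theorem, which is standard. One minor point worth spelling out when writing this up: since $T$ is connected it stabilizes each irreducible component of a $T$-invariant closed subset, so Borel's theorem does apply even though these loci need not be irreducible.
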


Let us recall the notion of a \emph{numerically effective} or nef vector bundle.  A line bundle on a proper algebraic variety is nef if it has nonnegative degree on every embedded curve. A vector bundle $\cE$ on a proper algebraic variety is called nef if the line bundle $\cO_{\bP(\cE)}(1)$ on the projectivization of $\cE$ is nef.  On $\bP^1$, recall that every vector bundle splits as a sum of line bundles $\cO(n_1) \oplus \cdots \oplus \cO(n_k)$---such a vector bundle is nef if and only if each $n_i \geq 0$.

In \cite{HMP}, it was proved that an equivariant vector bundle on a proper toric variety is nef if and only if its restriction to every $T$-invariant curve (necessarily isomorphic to $\bP^1$) is nef.  Using this and Theorem \ref{thm:1.2}, we can establish the following:

\begin{theorem}
\label{thm:nefness}
Let $X$ be a proper toric variety, let $\cE^\bullet$ be a bounded complex of equivariant vector bundles on $X$, and let $\kappa(\cE^\bullet)$ be the corresponding constructible complex.  The following are equivalent:
\begin{enumerate}
\item $\cE^\bullet$ is quasi-isomorphic to a nef vector bundle concentrated in degree zero.
\item For each lattice point $x \in M_\bZ$, the following hold:
\begin{enumerate}
\item the microlocal stalks $\mu_{x,-\xi}\kappa(\cE^\bullet)$ are concentrated in degree zero whenever $\xi$ belongs to the interior of a codimension zero or codimension one cone.
\item if $\xi$ belongs to a codimension one cone and $\psi$ belongs to an incident codimension zero cone, then the restriction map $\mu_{x,-\xi} \kappa(\cE^\bullet) \to \mu_{x,-\psi}\kappa(\cE^\bullet)$ is surjective.
\end{enumerate}
\end{enumerate}
\end{theorem}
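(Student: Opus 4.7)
The plan is to translate both parts of condition (2) via Theorem \ref{thm:1.2} into statements about the weight-graded pieces of $\bfR\Gamma(X_\sigma, \cE^\bullet\vert_{X_\sigma})$ for $\sigma$ a codimension zero or codimension one cone of $\Sigma$, and then to invoke the theorem of Hering, Mustata and Payne that reduces nefness of an equivariant vector bundle to nefness of its restriction to each $T$-invariant curve. A cone of codimension one corresponds to exactly such a curve, $X_\sigma \cong \bP^1$, on which any equivariant vector bundle splits as $\bigoplus_i \cO_{\bP^1}(n_i)$ (with character twists), nef exactly when all $n_i \geq 0$.

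First I would apply (2a) at top-dimensional cones, together with Theorem \ref{thm:vecbund}, to identify $\cE^\bullet$ with a genuine equivariant vector bundle $\cE$ concentrated in degree zero; conversely any genuine vector bundle automatically satisfies (2a) on top-dimensional cones by the same theorem. The equivalence in Theorem \ref{thm:nefness} therefore reduces to showing that, for $\cE$ a vector bundle, conditions (2a) and (2b) for codimension one $\sigma$ characterize the positivity of all $n_i$ in the splitting $\cE\vert_{X_\sigma} = \bigoplus_i \cO(n_i)$.

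For (1) $\Rightarrow$ (2): if $\cE$ is nef then all $n_i \geq 0$, so $H^1(X_\sigma, \cE\vert_{X_\sigma}) = 0$ in every weight and (2a) follows. For (2b), each weight space of $\Gamma(\bP^1, \cO(n_i))$ surjects onto the corresponding weight space of the fiber at the fixed point $X_\tau$, because for $n_i \geq 0$ the weight of the fiber is one of the endpoints of the arithmetic progression of weights appearing in $\Gamma$; Theorem \ref{thm:1.2}(2) then transfers this to surjectivity of $\mu_{x,-\xi}\kappa(\cE) \to \mu_{x,-\psi}\kappa(\cE)$. For (2) $\Rightarrow$ (1), condition (2a) applied to $\sigma$ forbids any $n_i \leq -2$ (a single such summand contributes a nonzero $H^1$ in some weight), while (2b) then forbids $n_i = -1$, since $\cO(-1)$ has zero global sections but a nonzero fiber at each fixed point of $\bP^1$, violating weightwise surjectivity. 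Hence all $n_i \geq 0$ for every codimension one $\sigma$, and Hering--Mustata--Payne yields nefness of $\cE$.

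The main obstacle I anticipate is careful bookkeeping of equivariant structures: one must verify that the weight through which the relevant one-parameter subtorus acts on the fiber of $\cO(n_i)$ at the fixed point $X_\tau \subset X_\sigma$ really lies in the range of weights appearing in $\Gamma(\bP^1, \cO(n_i))$ when $n_i \geq 0$, and that these characters match the lattice points $x \in M_\bZ$ labelling the weight spaces in (2) under the isomorphism of Theorem \ref{thm:1.2}. This is elementary but is the place where the combinatorics of the fan must be aligned with the algebraic splitting for the surjectivity claims to go through cleanly.
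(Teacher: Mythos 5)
Your proposal is correct and follows essentially the same route as the paper's proof: reduce via Hering--Musta\c{t}\u{a}--Payne to nefness on each $T$-invariant curve $X_\tau\cong\bP^1$, characterize nefness on $\bP^1$ by vanishing of $H^1$ together with surjectivity of $H^0(\cE\vert_{X_\tau})\to\cE_{X_\sigma}$ at each fixed point $X_\sigma$, pass to weight spaces, and translate through Theorem~\ref{thm:1.2}. One small but genuine virtue of your write-up is that you make explicit the initial reduction from a complex $\cE^\bullet$ to a genuine vector bundle $\cE$ by applying condition (2a) at codimension-zero cones together with Theorem~\ref{thm:vecbund}; the paper's proof takes this step silently.
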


\begin{remark}
\label{rem:2a2b}
Conditions (2a) and (2b) are easy to check in practice.  For instance, the constant sheaf on a closed union of cones in $N_\bR$ satisfies them, as does any sheaf supported on a union of cones of codimension at least 2.
\end{remark}

\begin{remark}
\label{rem:local}
Unlike Theorem \ref{thm:1.1}, the criteria of Theorems \ref{thm:vecbund}(2) and \ref{thm:nefness}(2) are manifestly local.  In fact, they can be checked in a neighbhorhood of each lattice point.
\end{remark}

\subsection{Examples}
\label{subsec:introexamples}

In dimension two it's sometimes possible to give ad hoc descriptions of constructible sheaves in pictures.  We give here some examples of convex sheaves on $\bR^2$ coming from equivariant vector bundles (in fact nef equivariant vector bundles) on toric surfaces.

\begin{example}
\label{ex:fujino}
The figure shows the constructible sheaf associated to a vector bundle on $\bP^2$ considered by Fujino (\cite[Example 4.10]{HMP}).
\begin{center}
\includegraphics[scale=.5]{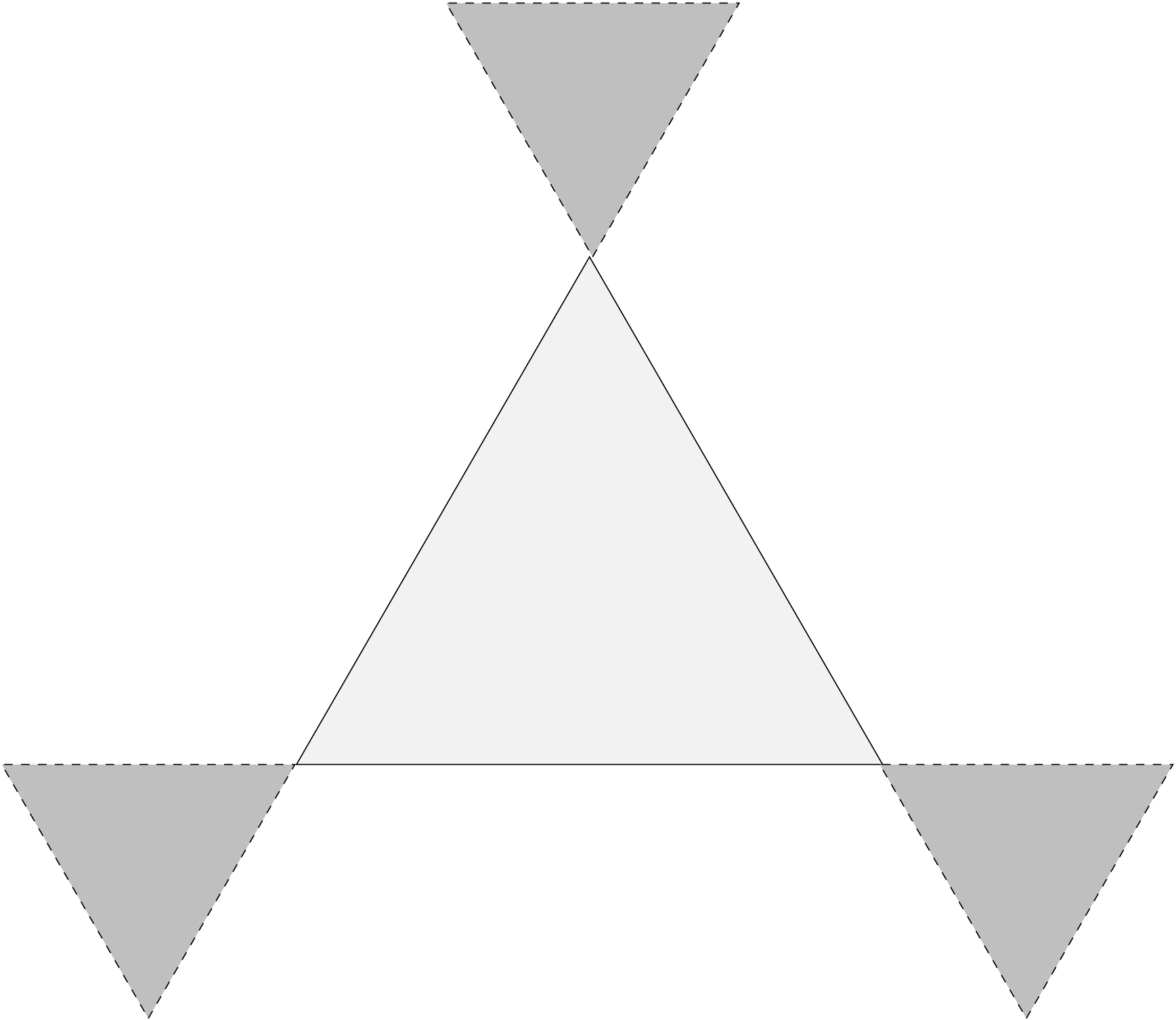}
\end{center}
The vector bundle is of the form $\mathit{Fr}^* T_{\bP^2} \otimes \cO(-m) = (\mathit{Fr}^* T_{\bP^2})$ where $T_\bP^2$ is the tangent sheaf to $\bP^2$, $\mathit{Fr}:\bP^2 \to \bP^2$ is a toric Frobenius map that raises each coordinate to the $n$th power for some integer $n$, and $m \gg 0$.  The equivariant structure on $\mathit{Fr}^* T_{\bP^2}$ is induced by that on $T_\bP^2$, and there is a $\bZ^2$-torsor of equivariant structures on $\cO(-m)$ that we can chose from.  We list some of the features of this sheaf:
\begin{enumerate}
\item As we have not specified an equivariant structure on $\cO(-m)$, the associated constructible sheaf is only well-defined up to translation.  Changing the equivariant structure by a character of the torus translates the constructible sheaf the corresponding lattice vector.
\item The darkly-shaded triangles are all congruent and have side lengths that grow with $n$, and the lightly shaded triangle has side lengths that grow with $m$.  Each side of a dark triangle passes through $n+1$ lattice points including the vertices, and each side of a light triangle passes through $m+1$ lattice points including the vertices.
\item The darkly-shaded (resp. lightly-shaded) region in the sheaf indicates a rank-one sheaf placed in degree -2 (resp. degree -1), and the stalks of the sheaf vanish along the dotted lines.  
\item The behavior of the sheaf in a neighborhood of the three points where the dark triangles meet the light triangle is described in more detail in Example \ref{ex:weirdlocal}.  In particular, the microlocalization at these points is the constant sheaf on a union of two rays in $N_\bR$, extended by zero.
\end{enumerate}
The vector bundle is nef, and this is visible in the diagram using the criterion of Theorem \ref{thm:nefness}: the microlocalizations $\mu_x$ can be computed as in Section \ref{sec:seomx}, and in each case Remark \ref{rem:2a2b} applies.  Fujino pointed out that this is a nef vector bundle on $\bP^2$ whose higher cohomology does not vanish, in contrast to the case of nef toric line bundles whose higher cohomology is always trivial.  This is also visible in the diagram: any lattice point in the interior of the middle triangle (which exist for $m$ large enough) will contribute to $H^1(\bP^2,\cE)$ by Theorem \ref{thm:1.2}.
\end{example}

\begin{example}
\label{ex:ml}
Let $L$ be an ample equivariant line bundle on $X$---then $L$ is globally generated.  The kernel of the surjection
$$H^0(X;L) \otimes \cO_X \to L$$
is another equivariant vector bundle called $\cM_L$, studied in the toric case in \cite{HMP}.  It is slightly easier to diagram $\kappa(\cM_L \otimes L')$ where $L'$ is another ample line bundle---here it is in case $X = \bP^1 \times \bP^1$, $L = \cO(3,2)$, and $L' = \cO(1,1)$:
\begin{center}
\includegraphics[scale=.5,angle=90]{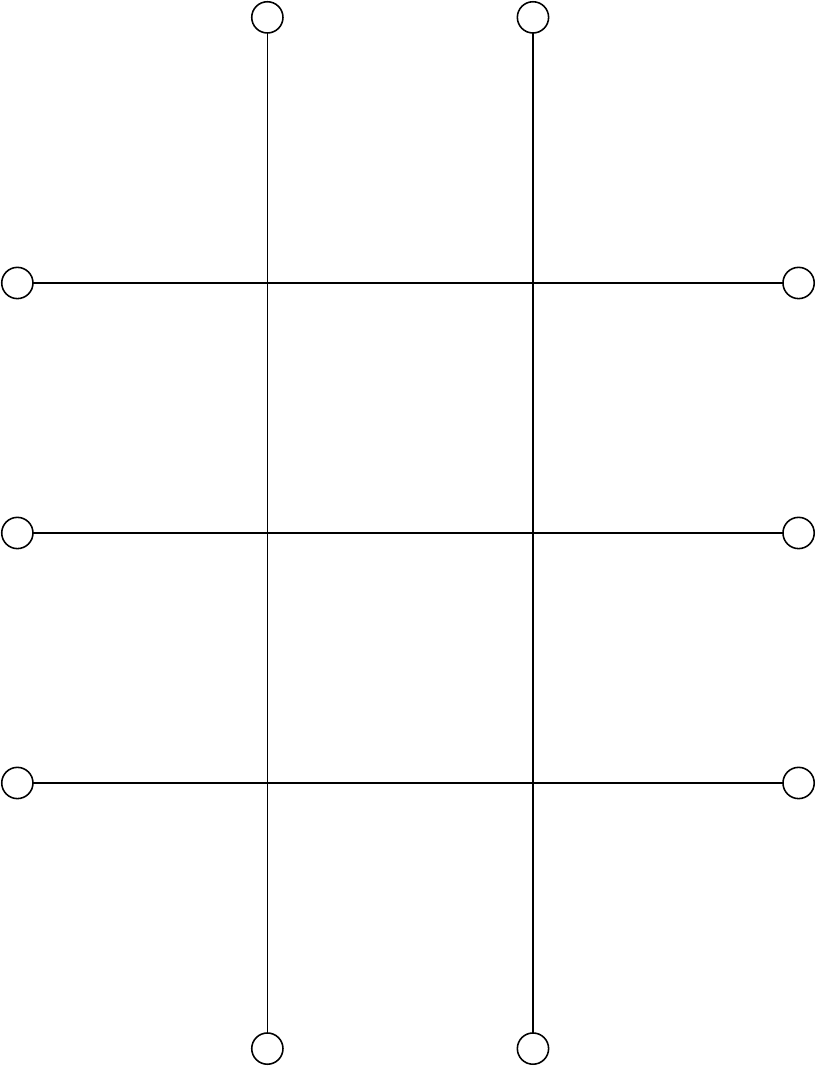}
\end{center}
That is, $\kappa(\cM_{\cO(3,2)} \otimes \cO(1,1))$ is a rank one constant sheaf on the union of three vertical open intervals and two horizontal open intervals, placed in degree $-2$, and extended by zero.  Note that the fact that this sheaf is convex depends crucially on the fact that the stalks at each endpoint of these intervals in zero.  

The vector bundle is nef and this is again visible in the figure by computing the microlocalizations at the noncrossing points of the intervals, at the endpoints of the intervals, and at the crossing points:
\begin{enumerate}
\item At the noncrossing points, the microlocalization is the constant sheaf supported on the conormal line to that point.

\item At the endpoints, the microlocalization is the constant sheaf supported on a closed half-plane

\item At the crossing points, the microlocalization is concentrated in degree zero, and is isomorphic to the kernel of the natural map
$$F_I \oplus F_{II} \oplus F_{III} \oplus F_{IV} \to \delta$$
where each $F$ is a constant sheaf supported on a closed quadrant and $\delta$ is the skyscraper sheaf at the origin.
\end{enumerate}

\end{example}

\begin{remark}
By Remark \ref{rem:local}, any sheaf locally isomorphic to the one displayed in Example \ref{ex:ml}---or locally isomorphic to a direct sum of $r$ copies of it---also comes from a nef vector bundle.  There is therefore a full embedding of the abelian category of representations of the free group on two generators (the fundamental group of the figure) to the additive category of nef vector bundles on $\bP^1 \times \bP^1$.  In the language of representation theory \cite{Dr}, this means that nef vector bundles on toric surfaces have ``wild representation type''---the problem  of classifying the indecomposable objects is as hard as classifying the indecomposable representations of a free group.
\end{remark}


\subsection{Notation and conventions}
\label{sec:filt}

We work over a commutative noetherian base ring $\sfR$.

Our filtrations will usually be indexed by the real numbers.  An \emph{open increasing filtration} of an $\sfR$-module $\cM$ is a sequence of submodules $\cM_{< t} \subset \cM$ such that whenever $s \leq t$, $\cM_{< s} \subset \cM_{< t}$, and such that $\cM_{< t} = \bigcup_{s < t} \cM_s$ for all $t$.  A \emph{closed increasing filtration} of $\cM$ is a sequence of submodules $\cM_{\leq t} \subset \cM$ such that whenever $s \leq t$, $\cM_{\leq s} \subset \cM_{\leq t}$ and such that $\cM_{\leq t} = \bigcap_{s > t} \cM_{\leq s}$.  Open increasing filtrations are equivalent to closed increasing filtrations: to go from one to the other set $\cM_{\leq t} = \bigcap_{s > t} \cM_{< s}$ and $\cM_{< t} = \bigcup_{s \leq t} \cM_{\leq s}$.

A toric variety $X$ is a variety equipped with an action of an algebraic torus $T \cong \Gm^n$.  We let $M$ be the character lattice of $T$ and $N$ the dual cocharacter lattice.  We write $M_\bR$ and $N_\bR$ for the realifications of $M$ and $N$.  To $X$ we can associate a fan $\Sigma \subset N_\bR$.  For each $\sigma \in \Sigma$, we have
\begin{itemize}
\item The associated $T$-orbit $O_\sigma \subset X_\sigma$, with $\codim(O_\sigma) = \dim(\sigma)$
\item The smallest $T$-stable Zariski open set $U_\sigma$ containing $O_\sigma$
\item The smallest $T$-stable Zariski closed set $X_\sigma$ containing $O_\sigma$
\end{itemize}
If $\chi:T \to \Gm$ is a character of $T$, we let $\cO(\chi)$ denote the structure sheaf of $X$ with an equivariant structure given by $\chi$ is the following way: if $t \in T$ and $(x,y) \in X \times \mathbb{A}^1$ then 
$$t \cdot (x,y) = (tx,\chi(t)^{-1} \cdot y)$$
If $X = \Spec \sfR[\sigma^\vee \cap M]$ is affine, then $\cO(\chi)$, regarded as an $M$-graded module, has a nonzero element in degree $\psi \in M$ if and only if $\psi - \chi \in \sigma^\vee$.

This paper requires less from modern homological algebra than \cite{fltz}---we work with derived categories in the sense of Verdier rather than any kind of triangulated dg category.  Some of our notation therefore clashes with that of \cite{fltz} but this should not cause confusion.  We let $\cQ_T(X)$ denote the abelian category of $T$-equivariant quasi-coherent sheaves on $X$, and $\Sh(V)$ for the abelian category of sheaves of $\sfR$-modules on a real vector space $V$.  The bounded derived categories are denoted by $D^b(\cQ_T(X))$ and $D^b(\Sh(V))$.  
\begin{itemize}
\item An object of $D^b(\Sh(V))$ is called \emph{constructible} if its cohomology sheaves are constant along the strata of  a Whitney stratification (for instance, the open simplices of a piecewise-smooth triangulation) of $V$.  We write $D^b_c(V)$ for the full subcategory of $D^b(\Sh(V))$ whose objects are constructible complexes.
\item A complex of quasicoherent sheaves on $X$ is \emph{perfect} if it is locally on $X$ quasi-isomorphic to a bounded complex of vector bundles.  We write $\Perf_T(X)$ for the full subcategory of $D^b(\cQ_T(X))$ spanned by equivariant quasicoherent sheaves whose underlying nonequivariant sheaf is perfect.  It's not known if every perfect complex on a toric variety is quasi-isomorphic to a bounded complex of vector bundles but this is true on smooth varieties (where furthermore every bounded complex of coherent sheaves is perfect) and on projective varieties.
\end{itemize}

If $Y$ is a locally compact topological space, let $H^i_c(Y;\bQ)$ denote the compactly-supported cohomology of $Y$ with rational coefficients.  If the latter groups are finite-dimensional and vanish in large degrees, we set
$$\chi_c(Y) = \sum (-1)^i \dim(H^i_c(Y;\bQ))$$

\section{Review of Morelli's isomorphism}

Let $X$ be a projective toric variety.  An equivariant ample line bundle $\cL$ on $X$ a toric variety determines a polytope $P_\cL$ in $M_\bR$ whose vertices are at lattice points.  We may encode a polytope $P$ by the normalized indicator function
$$j_P(x) := \bigg\{
\begin{array}{ll} 
(-1)^{\dim(M_\bR)} & \text{if $x$ is in the interior of $P$} \\
0 & \text{if $x$ is on the boundary or outside of $P$}
\end{array}
$$
Let us call a $\bZ$-valued function on $M_\bR$ \emph{polyhedral} (what Morelli calls \emph{hedral} in \cite{Mo2}) if it is piecewise-constant along a polyhedral stratification of $M_\bR$---we will define this in more detail in \ref{sec:ecm} below.  Write $\PolF(M_\bR)$ for the group of polyhedral functions on $M_\bR$.  Morelli's theorem is the following:

\begin{theorem}[Morelli \cite{Mo}]
\label{thm:morelli1}
Let $X$ be a projective toric variety with torus $T$.  Let $K_T(X)$ denote the Grothendieck group of $T$-equivariant vector bundles on $X$.  There is a unique injective homomorphism of groups $\Moi:K_T(X) \hookrightarrow \PolF(M_\bR)$ that carries the class of an ample equivariant line bundle $\cL$ to the polyhedral function $j_{P_\cL}$.
\end{theorem}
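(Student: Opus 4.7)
The plan is to construct $\Moi$ directly from the coherent-constructible correspondence $\kappa$ of \cite{fltz} reviewed below. For an equivariant vector bundle $\cE$ on $X$, I would define
$$\Moi(\cE)(x) := \sum_i (-1)^i \dim_\sfR H^i(\kappa(\cE)_x), \qquad x \in M_\bR.$$
Because $\kappa(\cE)$ is constant on the strata of a polyhedral stratification of $M_\bR$, the function $\Moi(\cE)$ lies in $\PolF(M_\bR)$. Since $\kappa$ sends short exact sequences of equivariant vector bundles to distinguished triangles in $D^b_c(M_\bR)$ and stalkwise Euler characteristic is additive on triangles, this descends to a group homomorphism $\Moi : K_T(X) \to \PolF(M_\bR)$.

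To verify the normalization $\Moi([\cL]) = j_{P_\cL}$ for an ample equivariant line bundle $\cL$, I would compute $\kappa(\cL)$ from the explicit description of $\kappa$ recorded in Section \ref{sec:CCCrev}: the sheaf $\kappa(\cL)$ is quasi-isomorphic to the constant sheaf $\sfR$ on the interior of $P_\cL$, extended by zero and placed in cohomological degree $-\dim M_\bR$. The stalkwise Euler characteristic then takes the value $(-1)^{\dim M_\bR}$ on the interior of $P_\cL$ and zero elsewhere, matching $j_{P_\cL}$ exactly.

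For uniqueness I would argue that classes of ample equivariant line bundles generate $K_T(X)$: any equivariant line bundle $\cL$ can be written as $(\cL\otimes\cL_0)\otimes\cL_0^{-1}$ with $\cL_0$ sufficiently ample, reducing generation by ample line bundles to generation by equivariant line bundles, which for a projective toric variety follows by resolving $\cE$ by finite sums of twists $\cO(\chi)\otimes\cL_0^{\otimes n}$ (an equivariant form of Serre's theorem). For injectivity, fully-faithfulness of $\kappa$ identifies $K_T(X)$ with a subgroup of $K_0(D^b_c(M_\bR))$, and the stalkwise Euler characteristic $K_0(D^b_c(M_\bR)) \to \PolF(M_\bR)$ is injective by standard constructible-sheaf theory (its inverse on polyhedrally constructible functions is built by sending the indicator function of an open cell to a shifted constant sheaf on that cell).

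The main obstacle is the uniqueness step, and specifically the finiteness of the generating resolutions it requires. On a smooth projective toric variety this is immediate from the splitting principle applied to an equivariant tautological filtration of $\cE$, but for singular projective $X$ one needs to know that every equivariant vector bundle admits a finite resolution by direct sums of $\cO(\chi)\otimes\cL_0^{\otimes n}$---a fact which is essentially a prerequisite for the CCC we are invoking, and whose verification for general projective toric $X$ is the most delicate part of the argument.
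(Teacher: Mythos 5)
The paper does not prove Theorem \ref{thm:morelli1} at all: it is cited directly to Morelli \cite{Mo}, and Section 2.2 only sketches a formula (equation \eqref{eq:1}) for $\Moi$ together with an assertion that a \v Cech argument verifies its basic properties. Injectivity and uniqueness are never argued in this paper, so there is no ``paper's proof'' to compare against; what you are offering is a new proof by running the logic of the paper backwards (deducing the decategorified theorem from the CCC that categorifies it). That is a legitimate and aesthetically natural route, and the construction of $\Moi$ as a stalkwise Euler characteristic of $\kappa(\cE)$, its additivity on triangles, and the normalization check against $j_{P_\cL}$ using Example \ref{ex:taut} are all fine.

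The genuine gap is in the injectivity step. You assert that ``fully-faithfulness of $\kappa$ identifies $K_T(X)$ with a subgroup of $K_0(D^b_c(M_\bR))$.'' This inference is not valid: a fully faithful exact functor between triangulated categories need not induce an injection on Grothendieck groups. A standard counterexample is the fully faithful inclusion of the derived category of sheaves supported at the origin, $D^b_{\{0\}}(\bA^1)\hookrightarrow D^b(\bA^1)$, under which the class of the skyscraper $k_0 = \mathrm{Cone}(\cO\xrightarrow{x}\cO)$ is sent to $[\cO]-[\cO]=0$. In your setting the CCC gives an equivalence $\Perf_T(X)\cong D^b_{cc}(M_\bR;\LS)$, hence an isomorphism $K_T(X)\cong K_0(D^b_{cc}(M_\bR;\LS))$; but what you then need is injectivity of $K_0(D^b_{cc}(M_\bR;\LS))\to K_0(D^b_c(M_\bR))$, i.e.\ that no nonzero class supported in the Lagrangian $\LS$ becomes trivial in the ambient $K_0$. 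That statement is precisely the content of Morelli's injectivity and needs its own argument --- for instance, one could try to show directly that the classes $[\Theta(\sigma,\chi)]$ remain linearly independent in $K_0(D^b_{\cH_\Sigma}(M_\bR))$ by a M\"obius-type inversion relating the $\Theta$-sheaves to constant sheaves on cells of $\cH_\Sigma$, but that is a real step you have omitted. You also correctly flag the second soft spot yourself: generation of $K_T(X)$ by ample equivariant line bundle classes, which underlies the uniqueness assertion, is not automatic when $X$ is singular, since resolutions of equivariant vector bundles by sums of $\cO(\chi)\otimes\cL_0^{\otimes n}$ need not terminate.
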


Much more is true.  For instance, there is a local criterion for an element of $\PolF(M_\bR)$ to be in the image of the map from $K_T(X)$, and the map is compatible with ring structures on $K_T(X)$ and $\PolF(M_\bR)$.  These refined results are best expressed using the concept Euler characteristic measure.  In this section we will review the language of Euler characteristic measure, summarize some of Morelli's results, and give a construction of Morelli's map $\Moi$.  We mostly follow Morelli \cite{Mo,Mo2} but make some changes that reflect our sheaf- and Morse-theoretic goals.  We omit many details, for a more complete story we refer of course to \cite{Mo,Mo2} and also to \cite{GM,KS} and their references.

\subsection{Euler characteristic measure and operations on constructible functions}
\label{sec:ecm}

The compactly-supported Euler characteristic of topological spaces can be thought of as a kind of measure, in the sense of measure theory; this point of view was advocated early on in \cite{Ha1,Ha2}.  The Euler characteristic can take negative values, cannot take noninteger values, and is not countably additive, but it does satisfy the relation
$$\chi_c(A \cup B) = \chi_c(A) + \chi_c(B) - \chi_c(A \cap B)$$
whenever $A$ and $B$ are sufficiently nice subsets of a nice topological space $X$.  This turns out to be enough to develop interesting analogs of familiar notions from function theory---notably, the notions of integration and of Fourier transform.

To make sense of the phrase ``sufficiently nice'' requires some stratification theory and, for some purposes, model theory.  For our purposes, however, $X$ can always be taken to be a real vector space, and ``sufficiently nice'' to mean polyhedral, in which case the theory is much simpler.  Let $V$ be a finite-dimensional real vector space.  A \emph{polyhedral cell} in $V$ is a subset of $V$ that is cut out by finitely many linear equations and finitely many strict linear inequalities.  Note that a polyhedral cell is allowed to be unbounded in $V$.  A \emph{polyhedral stratification} $S$ of $V$ is a decomposition of $V$ into finitely many polyhedral cells that are disjoint, and that satisfy the following property: the closure of a polyhedral cell in $S$ is a union of polyhedral cells in $S$.  (Such a stratification is always a Whitney stratification.)  A function on $V$ is polyhedral if it is constant along the cells of a polyhedral stratification.

If $P$ is any polyhedral cell, we define polyhedral functions $i_{P}$ and $j_P$ as follows:
$$
\begin{array}{c}
i_{P}(x) := 
{
\bigg\{
\begin{array}{ll}
1 & \text{if $x$ is in the closure of $P$} \\
0 & \text{otherwise}
\end{array}
}
\\
j_P(x)  :=  
{\bigg\{
\begin{array}{ll}
(-1)^{\dim P} & \text{if $x$ is in the interior of $P$}\\
0 & \text{if $x$ is outside of or on the boundary of $P$}
\end{array}
}
\end{array}
$$
We refer to $i_P$ and $j_P$ as the \emph{standard} and \emph{costandard} indicator functions associated to $P$.  (Note that $i_P$ is literally the indicator function of $\overline{P}$, not $P$ itself.  The names ``standard'' and ``costandard'' are motivated by the standard and costandard sheaves of \cite{NZ}.)  If $Q$ is the closure of a polyhedral cell $P$ we will set $i_Q = i_P$, $j_Q = j_P$.

\begin{proposition}
There is a unique linear map $\int:\PolF(V) \to \bZ$ that carries the indicator functions $j_P$ to 1. If $1_X$ denotes the indicator function of a polyhedral subset $X \subset V$, then we have
$$\int f = \chi_c(X)$$
where $\chi_c(X)$ denotes the alternating sum of the compactly-supported Betti numbers of $X$.
\end{proposition}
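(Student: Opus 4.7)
The plan has three pieces: uniqueness via spanning, existence via a formula on stratifications with a refinement-invariance check, and identification with $\chi_c$ via additivity.

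For uniqueness, any polyhedral function $f$ constant on the interiors of cells of a polyhedral stratification $S$, with value $f(P)$ on $\mathrm{int}(P)$, decomposes as
\[ f = \sum_{P \in S} (-1)^{\dim P}\, f(P)\, j_P, \]
as one verifies pointwise. The $j_P$ therefore span $\PolF(V)$, and so any linear map with $\int j_P = 1$ is determined by this condition.

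For existence, for each polyhedral stratification $S$ of $V$ define a linear map on the subgroup of $S$-polyhedral functions by
\[ I_S(f) := \sum_{P \in S} (-1)^{\dim P}\, f(P). \]
Direct substitution gives $I_S(j_P) = (-1)^{\dim P}(-1)^{\dim P} = 1$. The main step is to show $I_S$ is unchanged under refinement, so that any two stratifications yield the same value (any two admit a common polyhedral refinement) and the $I_S$ glue to a single map $\int : \PolF(V) \to \bZ$. By linearity, refinement invariance reduces to the identity
\[ \sum_{Q \in S',\, Q \subset \mathrm{int}(P)} (-1)^{\dim Q} = (-1)^{\dim P} \]
for any refinement $S'$ of a single cell $P$. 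I would prove this by invoking two classical facts about compactly supported Euler characteristic: an open polyhedral $d$-cell is homeomorphic to $\bR^d$, so its $\chi_c$ equals $(-1)^d$; and $\chi_c$ is additive on finite locally closed polyhedral decompositions. Both sides of the identity then read $\chi_c(\mathrm{int}(P))$.

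For the last assertion, choose a stratification $S$ refining $V = X \sqcup (V \setminus X)$ and compute
\[ \int 1_X = \sum_{P \subset X} (-1)^{\dim P} = \sum_{P \subset X} \chi_c(\mathrm{int}(P)) = \chi_c(X), \]
where the final equality is once again additivity of $\chi_c$ applied to the decomposition of $X$ into the interiors of its cells. The main obstacle is the refinement invariance: it is the unique place where a topological input (rather than purely combinatorial bookkeeping) is needed, and it also explains why the costandard normalization $j_P \mapsto 1$, rather than $i_P \mapsto 1$, is the natural one.
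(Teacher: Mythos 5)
Your proof is correct. The paper states this proposition without proof, as it is classical material on Euler characteristic measure (the surrounding discussion attributes the point of view to Hadwiger \cite{Ha1,Ha2}), so there is no in-paper argument to compare against. Your route is the standard one, and you correctly isolate the two topological inputs: an open polyhedral $d$-cell is homeomorphic to $\bR^d$ and so has $\chi_c = (-1)^d$, and $\chi_c$ is additive across a finite locally closed partition (a consequence of the long exact sequence in compactly supported cohomology). Two small points worth tightening. The cells of a polyhedral stratification are already relatively open, so ``$\mathrm{int}(P)$'' in your displays should just read $P$ (the relative interior, not the ambient interior, which is empty for lower-dimensional cells); correspondingly the middle term of your last display is $\chi_c(P)$. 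And for the existence step to actually yield $\int j_P = 1$ for an \emph{arbitrary} polyhedral cell $P$, you should observe that any such $P$ occurs as a cell of some polyhedral stratification of $V$---for instance the one induced by the arrangement of hyperplanes appearing among the equalities and inequalities cutting out $P$---so that the normalization is genuinely verified for every $j_P$, not only for cells of a fixed stratification.
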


We will refer to $\int$ as ``integration with respect to Euler characteristic measure.''

If $f \in \PolF_c(V)$ and $X \subset V$ is a polyhedral subset, then we define
$$
\int_X f\vert_X := \int f \cdot 1_X
$$
where $1_X$ denotes the indicator function of $X \subset V$.
Given a linear map $u:V \to V'$, we may define operations $u^*:\PolF(V') \to \PolF(V)$ and $u_!:\PolF(V) \to \PolF(V')$ as follows:
$$
\begin{array}{rcl}
u^*(\phi)(x) & = &  \phi(u(x)) \\
u_!(\phi)(x) & = & \int_{u^{-1}(x)} \phi\vert_{u^{-1}(x)}
\end{array}
$$

\begin{remark}
The ring structure on $\PolF_c(M_\bR)$ that matches that on $K_T(X)$ under Morelli's map is the convolution product, given by the formula
$$f \star g := v_!(f \times g)$$
where $v:M_\bR \times M_\bR \to M_\bR$ denotes the addition map and $f \times g$ is the function that sends $(x,y)$ to $f(x)g(y)$.  Thus informally we have
$$f \star g (x) = \int f(y)g(x-y) d\chi_y$$
If $P$ and $Q$ are polytopes then the convolution of the functions $i_P$ and $i_Q$ is the standard indicator function on another polytope known as the Minkowski sum of $P$ and $Q$.
\end{remark}

Morelli \cite[2.5]{Mo2} defines a group of functions generated by ``spherical polyhedra.''  We will rephrase some of this theory in order to introduce a Fourier(-Sato) transform.  A polyhedral function $f:V \to \bR$ is called \emph{conical} if $f(\lambda \cdot x) = f(x)$ for all $\lambda \in \bR_{>0}$.  We write $\PolF_{\bR_{>0}}(V)$ for the group of conical functions on $V$.  

The germ of a polyhedral function around a point $x \in V$ is naturally identified with an element of $\PolF_{\bR_{>0}}(T_x V)$.  More precisely, there is a unique homomorphism $\nu_x:\PolF(V) \to \PolF_{\bR_{>0}}(T_x V)$ such that for all $f \in \PolF(V)$ there exists a neighborhood $U$ of $x$ with $\nu_x(f)\vert_U = f\vert_U$.  We will refer to the homomorphism $\nu_x$ as the \emph{specialization operator} at $x$.  A partner $\mu_x$ to the specialization operator, which we will refer to as the \emph{microlocalization operator}, can be described in terms of a kind of Fourier transform which we now define.

\begin{definition}
Let $V$ be a real vector space and let $V^*$ be its dual.  We define the \emph{Fourier-Sato transform} for functions to be the homomorphism
$$\FT:\PolF_{\bR_{> 0}}(V) \to \PolF_{\bR_{>0}}(V^*)$$
by the formula
$$\FT(f)(\xi) := \int_{\{v \in V \mid \langle\xi,v\rangle \leq 1\}} f$$
if $\xi \neq 0$ and $\FT(f)(0) := \int f$.
\end{definition}

\begin{remark}
This Fourier-Sato transform is closely related to a construction given by Barvinok \cite[Lecture 2, Theorem 4]{Ba}.
\end{remark}

\begin{example}
Suppose that $V = \bR$ and $V^* = \bR$ and the pairing is given by $\langle \xi,v\rangle = \xi v$.  If $f = i_{\{x \mid x \geq 0\}}$ is given by
$$f(x) = \bigg\{\begin{array}{cl}
1 & \text{if }x\geq 0\\
0 & \text{if }x < 0
\end{array}
$$
we have
\begin{itemize}
\item $\FT(f)(\xi) = \chi_c([0,1/\xi]) = 1$ is the compactly-supported Euler characteristic of a closed interval if $\xi$ is positive
\item $\FT(f)(\xi) = \chi_c([0,\infty)) = 0$ is the compactly supported Euler characteristic of a half-open interval if $\xi$ is negative or zero.
\end{itemize}
\end{example}

We have the following properties of the operator $\FT$:
$$\FT \circ \FT(f) (x) = f(-x)$$
for all $f \in \PolF_{\bR_{>0}}(V)$ and $x \in V$, and
$$\begin{array}{rcl}
\FT(i_\sigma) & = & (-1)^{\dim(\tau)} j_\tau\\
\FT(j_\tau) & = &i_{-\sigma}
\end{array}
$$
whenever $\sigma$ is a polyhedral cone in $V$, $\tau = \{\xi \in V^* \mid \forall x \in \sigma \xi(x) \geq 0\}$ is the dual cone in $V^*$, and $-\sigma$ denotes the image of $\sigma$ under the antipodal map.

We define a homomorphism $\mu_x:\PolF(V) \to \PolF(T_x^* V)$ by $\mu_x(f) := \FT(\nu_x(f))$.  If $\xi \in T_x^* V$ define $\mu_{x,\xi}(f) = \mu_x(f)(\xi)$.  For $f \in \PolF(V)$, we define the \emph{singular support} $\SS(f) \subset T^*V$ of $f$ to be the closure of the set of covectors $(x,\xi) \in V \times V^*$ such that $\mu_{x,\xi}(f) \neq 0$.  We have the following remarkable property:

\begin{theorem}
$\SS(f)$ is the closure of a Lagrangian submanifold of $T^*V$ with its natural symplectic structure.  In particular $\dim(\SS(f)) = \dim(V)$.
\end{theorem}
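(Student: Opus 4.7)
The plan is to reduce the problem to a local statement at each point $x \in V$: namely, that if $x$ lies in a cell $C$ of some polyhedral stratification on which $f$ is constant, then $\mu_{x,\xi}(f)$ vanishes unless $\xi$ annihilates $T_xC$. Once this is proved, it follows that
$$\SS(f) \subset \bigcup_{C \in S} \overline{T^*_CV},$$
a finite union of closures of conormal bundles to the cells of $S$. Such a union is well known to be a closed conical Lagrangian subvariety of $T^*V$ (each $T^*_CV$ is smooth Lagrangian of pure dimension $\dim V$), and this simultaneously yields Lagrangianness and the dimension bound.

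To carry out the local step, I would first choose a polyhedral stratification $S$ along whose cells $f$ is constant. Fix $x \in V$ lying in the relative interior of a cell $C$ with $L := T_xC$. The germ $\nu_x(f) \in \PolF_{\bR_{>0}}(T_xV)$ is then constant in the directions of $L$: this is just because near $x$, the cell $C$ extends uniformly in those directions, and any cell whose closure contains $x$ is $L$-invariant near $x$ by the frontier condition of the stratification. Thus the conical stratification of $T_xV$ underlying $\nu_x(f)$ is made up of cones of the form $L + \sigma$ for various cones $\sigma$.

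Next I would use the elementary fact that any conical polyhedral function can be written as a finite integer combination of standard indicators $i_\sigma$ of polyhedral cones; since $\nu_x(f)$ is $L$-invariant, the cones $\sigma$ appearing can be chosen to contain $L$. Applying the identity $\FT(i_\sigma) = (-1)^{\dim \tau} j_\tau$ from the excerpt, where $\tau \subset V^*$ is the dual cone, one finds $\tau \subset L^\perp$: indeed $\xi \in \tau$ means $\xi$ is nonnegative on $\sigma \supset L$, which since $L$ is linear forces $\xi|_L = 0$. Hence $\mu_x(f) = \FT(\nu_x(f))$ is supported in $L^\perp = (T_xC)^\perp$, which is the fiber of $T^*_CV$ at $x$. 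This gives the desired containment $\SS(f) \subset \bigcup_C \overline{T^*_CV}$.

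Finally, I would verify that taking the closure of this union does not enlarge its dimension: each $\overline{T^*_CV}$ is an irreducible conical Lagrangian of dimension exactly $\dim V$, so the finite union has dimension $\dim V$, and its smooth locus (away from the union of $\overline{T^*_{C'}V}$ for $C' \neq C$) is a Lagrangian submanifold whose closure is $\SS(f)$. The main obstacle is the decomposition-plus-Fourier-transform step, which requires treating cones that need not be strongly convex: one has to verify that the formula $\FT(i_\sigma) = (-1)^{\dim\tau}j_\tau$ is applied consistently when $\sigma$ contains a linear subspace, and that the combinatorial decomposition of $\nu_x(f)$ into $i_\sigma$'s can be chosen $L$-compatibly. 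Once the bookkeeping is done, the Lagrangian statement reduces to the standard fact that conormal bundles to a polyhedral cellulation assemble into a conical Lagrangian.
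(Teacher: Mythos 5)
Your argument is correct, but it follows a genuinely different route from the paper's, which offers only a one-sentence sketch: check the claim directly for the standard indicator functions $1_P$, then invoke the subadditivity $\SS(f+g) \subset \SS(f) \cup \SS(g)$ and the fact that any polyhedral $f$ is an integer combination of such indicators. Your approach instead works microlocally at each point: you choose a stratification $S$ compatible with $f$, observe that at $x$ lying in a cell $C$ the germ $\nu_x(f)$ is invariant under $L = T_xC$, decompose it into indicators $i_\sigma$ of cones $\sigma \supset L$, and use the identity $\FT(i_\sigma) = (-1)^{\dim\tau} j_\tau$ (valid for cones with lineality, as you correctly flag) to conclude that $\mu_x(f)$ is supported in $L^\perp$, hence $\SS(f) \subset \Lambda_S$. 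The paper's route is shorter once the subadditivity lemma is granted; yours is more explicit about \emph{which} Lagrangian contains $\SS(f)$, dovetails with the microlocal formalism the rest of the paper actually uses, and avoids subadditivity entirely. One caution applying to both arguments: each establishes that $\SS(f)$ is \emph{contained} in a closed conic Lagrangian (giving $\dim\SS(f)\le\dim V$), but the stronger assertion that $\SS(f)$ is the closure of a Lagrangian \emph{submanifold} requires, additionally, that the set $\{(x,\xi): \mu_{x,\xi}(f)\neq 0\}$ meets the smooth locus of $\Lambda_S$ densely in $\SS(f)$. Your closing sentence conflates the smooth locus of $\bigcup_C\overline{T^*_CV}$ (whose closure is $\Lambda_S$, not $\SS(f)$) with the smooth locus of $\SS(f)$ itself; this should be tightened, though the paper's sketch glosses over the same point and so it is not a defect particular to your proof.
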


This can be proved in the polyhedral case by checking that it is true for standard functions $1_P$, and that $\SS(f + g) \subset \SS(f) \cup \SS(g)$.  

\begin{remark}
As $T^*V \cong  V \times V^*$, we can project the singular support $\SS(f)$ to $V^*$.  When $f$ is of the form $\Moi(\cE)$ for $\cE$ a toric vector bundle, the map $\SS(f) \to N_\bR$ is similar to Payne's ``branched cover of $\Sigma$'' constructed in \cite{P1}.
\end{remark}

\subsection{Construction of Morelli's map}

Morelli constructs the map $\Moi:K_T(X) \to \PolF(M_\bR)$ so that it obeys two rules
\begin{enumerate}
\item[(a)] If $\cE$ is a toric vector bundle, then for $\chi \in M_\bZ$
$$\Moi(\cE)(\chi) = \sum_{i = 0}^n (-1)^i \mathrm{rank}(H^i(X;\cE)_\chi)$$
where $H^i(X,\cE)_\chi$ denotes the $\chi$th weight space of the $T$-module $H^i(X,\cE)$.
\item[(b)] If $\Psi^k:K_T(X) \to K_T(X)$ denotes the $k$th Adams operation, then $\Moi(\Psi^k(\cE))(\chi) = \Moi(\cE)(\chi/k)$.
\end{enumerate}
These rules determines the function $\Moi(\cE)$ at elements of $M_\bQ = M_\bZ \otimes \bQ$, and can be extended to all of $M_\bR$ by a piecewise-continuity argument.  To verify that $\Moi$ is a well-defined isomorphism from this perspective takes some work.  We will here define Morelli's map more simply and directly, by a process that can also be done at the level of sheaves.

For each vector bundle $\cE$ and cone $\sigma$ of $\Sigma$, following \cite[3.1]{P1} we associate a multiset $u(\sigma,\cE) \subset M/(\sigma^\perp \cap M)$ as follows.  If $x \in X$ is a point in the $T$-orbit corresponding to $\sigma$, and $T_x \subset T$ is the isotropy subgroup, then $u(\sigma,\cE)$ is the collection of weights of the $T_x$ action on the fiber $\cE_x$.  Then we define
\begin{equation}
\label{eq:1}
\Moi(\cE) = \sum_{\sigma \in \Sigma} \sum_{\chi \in u(\sigma,\cE)} (-1)^{\dim(\sigma)} j_{\chi + \sigma^\vee}
\end{equation}
where $i_{\chi + \sigma^\vee}$ is the costandard indicator function of $\chi + \sigma^\vee \subset M_\bR$.  That this is additive for short exact sequences and satisfies Morelli's conditions (a) and (b) follows from a \v Cech theory argument.  

\begin{remark}
Note that we are expressing $\Moi(\cE)$ as an alternating sum of functions that are not in the image of $\Moi$.  Morally, the summands are the classes of certain equivariant \emph{quasi}coherent sheaves that $\cE$ can be expressed as a virtual sum of.  See Section \ref{sec:cech}.
\end{remark}

\begin{remark}
\label{rem:chern}
It follows from equation \ref{eq:1} and the formula for $\FT(j_{\sigma^\vee})$ that if $\sigma$ is a top-dimensional cone and $\xi$ belongs to the interior of $\sigma$, then $\mu_{x,-\xi} \Moi(\cE)$ is the multiplicity of $x$ in $u(\sigma,\cE)$.  Payne has shown \cite{P3} that the total equivariant Chern class $c_T(X) \in A_T^*(X)$---the equivariant Chow ring of $X$---can be computed from the multisets $u(\sigma,\cE)$.  This shows that $c_T(\cE)$ can be computed from $\Moi(\cE)$ using the microlocal operators $\mu_{x,-\xi}$.
\end{remark}

We can use the language of Section \ref{sec:ecm} to state a sharper version of Theorem \ref{thm:morelli1}.  First, if $\Sigma$ is a complete rational polyhedral fan in $N_\bR$, let $\cH_\Sigma$ be the periodic affine hyperplane arrangement in $M_\bR$ obtained from translating the hyperplanes perpendicular to the rays of $\Sigma$ by all lattice points $M_\bZ$.  This hyperplane arrangement cuts $M_\bR$ into polyhedral cells, and we write $\PolF(M_\bR;\cH_\Sigma) \subset \PolF(M_\bR)$ for the subgroup of functions that are constructible along $\cH_\Sigma$.  Then Morelli's theorem is as follows

\begin{theorem}[{\cite[Theorem 8]{Mo}}]
\label{thm:morelli2}
Let $X$ be a complete toric variety with fan $\Sigma$.  The following are isomorphic
\begin{enumerate}
\item $K_T(X)$.
\item The subgroup of $\PolF(M_\bR;\cH_\Sigma)$ of functions $f$ satisfying the following condition: at each point $x \in M_\bR$, the germ $\nu_x f$ can be written as a linear combination of functions of the form $j_{\sigma^\vee}$, where $\sigma$ runs through $\Sigma$.
\end{enumerate}
\end{theorem}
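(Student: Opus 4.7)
The plan is to establish both inclusions. Injectivity of $\Moi$ is already part of Theorem~\ref{thm:morelli1}; what remains is to identify the image with the described subgroup of $\PolF(M_\bR;\cH_\Sigma)$.

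For the containment of the image in the described subgroup, first note that every summand $j_{\chi+\sigma^\vee}$ appearing in formula~(\ref{eq:1}) is constructible along $\cH_\Sigma$, since the facets of $\chi+\sigma^\vee$ are translates by lattice points of hyperplanes perpendicular to rays of $\sigma$. For the local germ condition, the key calculation is the specialization of a single costandard indicator at a point $x\in M_\bR$. If $x-\chi$ lies in the relative interior of the face of $\sigma^\vee$ dual to a face $\tau$ of $\sigma$, then the tangent cone of $\chi+\sigma^\vee$ at $x$ is exactly $\tau^\vee$, and a direct computation gives $\nu_x\, j_{\chi+\sigma^\vee}=j_{\tau^\vee}$; otherwise this germ vanishes. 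Since every face of a cone of $\Sigma$ is again in $\Sigma$, summing as in~(\ref{eq:1}) shows that $\nu_x\Moi(\cE)$ is a $\bZ$-linear combination of the $j_{\tau^\vee}$ with $\tau\in\Sigma$.

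For the reverse inclusion I would use a \v Cech-style localization argument. The affine toric opens $U_\sigma$, for $\sigma$ running through the maximal cones of $\Sigma$, form an open cover of $X$, and a Mayer--Vietoris sequence realizes $K_T(X)$ as the equalizer of $\bigoplus_\sigma K_T(U_\sigma) \rightrightarrows \bigoplus_{\sigma,\sigma'} K_T(U_{\sigma\cap\sigma'})$. On each affine piece $U_\sigma$ the map $\Moi$ is transparent, carrying $[\cO_{U_\sigma}(\chi)]$ to $j_{\chi+\sigma^\vee}$, and an analogous local version of the theorem identifies $K_T(U_\sigma)$ with the subgroup of germs that are linear combinations of the $j_{\tau^\vee}$ with $\tau$ a face of $\sigma$. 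Given a function $f$ in the claimed subgroup, its germ at a point of the top-dimensional stratum associated to $\sigma$ determines a local class $[f]_\sigma\in K_T(U_\sigma)$; the germ condition at points of shared walls of adjacent maximal cones becomes exactly the Mayer--Vietoris compatibility on the overlaps $U_\sigma\cap U_{\sigma'}$.

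The main obstacle is surjectivity: one must check that the pointwise germ conditions translate precisely into the \v Cech gluing data, and that the candidate local classes $[f]_\sigma$ actually assemble to a global class whose image under $\Moi$ is $f$. This requires careful bookkeeping linking the germ calculus of the first step to the Klyachko/Payne description used in~(\ref{eq:1}), and is most delicate at non-simplicial cones, where $K_T(U_\sigma)$ is not simply a group ring on $M/(\sigma^\perp\cap M)$ and extra care is needed to see that the germ data pins down a unique virtual bundle class.
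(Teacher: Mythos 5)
The paper does not prove this theorem: it is cited verbatim from Morelli (Theorem~8 of \cite{Mo}), and the surrounding text only supplies the explicit formula~(\ref{eq:1}) for $\Moi$ and the microlocal rephrasing (Corollary~\ref{cor:morelli3}). There is therefore no in-paper argument to compare yours against, only the external reference.

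Your forward inclusion is fine and is essentially what formula~(\ref{eq:1}) gives for free: if $x-\chi$ lies in the relative interior of the face $\sigma^\vee\cap\tau^\perp$ of $\sigma^\vee$ (with $\tau$ a face of $\sigma$), the tangent cone of $\chi+\sigma^\vee$ at $x$ is $\tau^\vee$, so $\nu_x j_{\chi+\sigma^\vee}=j_{\tau^\vee}$, and the facet hyperplanes are translates by a lattice representative of $\chi$ of ray-perpendiculars, hence lie in $\cH_\Sigma$. Faces of cones of $\Sigma$ are again cones of $\Sigma$, so $\nu_x\Moi(\cE)$ lands in the prescribed span. The reverse inclusion is where all the content of Morelli's theorem lives, and the argument you sketch does not close it. Two concrete problems. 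First, the asserted equalizer description of $K_T(X)$ in terms of $\bigoplus_\sigma K_T(U_\sigma)\rightrightarrows\bigoplus_{\sigma,\sigma'} K_T(U_{\sigma\cap\sigma'})$ is not automatic: Zariski descent for $K$-theory is a statement at the level of spectra, and at the level of Grothendieck groups there is in general a contribution from $K_1$ of overlaps; you would need to prove this equalizer statement for the toric $U_\sigma$, not invoke it. Second, the passage from the family of germs $\nu_x f$ (a conical function on $T_xM_\bR$ for each real point $x$) to ``local classes'' $[f]_\sigma\in K_T(U_\sigma)$ is exactly the nontrivial step and is left unexplained: the germ condition constrains $f$ at every point of $M_\bR$, whereas a class in $K_T(U_\sigma)$ is pinned down by lattice-indexed multiplicity data, and reconciling the two (in particular at points of $M_\bR\setminus M_\bQ$, and for non-simplicial $\sigma$, which you flag yourself) is where Morelli's actual proof does its work. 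As it stands this is a reasonable outline of a strategy with the surjectivity half genuinely missing.
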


We can rephrase Morelli's condition (2) in terms of $\mu_x$ and singular support.  Define a subset $\Lambda_\Sigma \subset T^* M_\bR$ by the formula
\begin{equation}
\label{eq:2}
\LS = \bigcup_{\tau\in\Sigma} (\tau^\perp + M)\times -\tau
\end{equation}

\begin{corollary}
\label{cor:morelli3}
Let $X$ be a complete toric variety with fan $\Sigma$.  Then (1) and (2) of Theorem \ref{thm:morelli2} are also isomorphic to the following:
\begin{enumerate}
\item[(3)] The subgroup of $\PolF(M_\bR;\cH)$ consisting of functions $f$ with the property that at each point $x \in M_\bR$, the function $\mu_x(f):N_\bR \to \bZ$ is constant along the interior of each cone of the fan $-\Sigma$ antipodal to $\Sigma$.
\item[(4)] The group $\PolF(M_\bR;\LS)$ of functions $f$ satisfying $\SS(f) \subset \LS$.
\end{enumerate}
\end{corollary}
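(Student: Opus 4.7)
The plan is to prove the two new equivalences (2)$\Leftrightarrow$(3) and (2)$\Leftrightarrow$(4) separately. The first is formal Fourier--Sato duality, while the second follows from the explicit Morelli formula \eqref{eq:1} and a local analysis at each point of $M_\bR$.

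For (2)$\Leftrightarrow$(3), since $\mu_x=\FT\circ\nu_x$ and $\FT(j_{\sigma^\vee})=i_{-\sigma}$, the Fourier--Sato transform carries the $\bZ$-linear span of $\{j_{\sigma^\vee}\}_{\sigma\in\Sigma}$ isomorphically onto the $\bZ$-linear span of $\{i_{-\sigma}\}_{\sigma\in\Sigma}$. Because $\Sigma$ is complete, the antipodal fan $-\Sigma$ partitions $N_\bR$ into relatively open cones, and M\"obius inversion on the face poset of $-\Sigma$ identifies $\mathrm{span}\{i_{-\sigma}\}_{\sigma\in\Sigma}$ with $\mathrm{span}\{j_{-\sigma}\}_{\sigma\in\Sigma}$, which is exactly the space of conical polyhedral functions on $N_\bR$ that are constant along the relatively open cones of $-\Sigma$. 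Hence (2) and (3) are equivalent.

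For (2)$\Rightarrow$(4), I analyze each summand $j_{\chi+\sigma^\vee}$ in \eqref{eq:1} locally. If $x\in M_\bR$ lies in the relative interior of a face $\chi+(\sigma^\vee\cap\tau^\perp)$ for some face $\tau\leq\sigma$, the tangent cone of $\chi+\sigma^\vee$ at $x$ is $\sigma^\vee+\tau^\perp$, whose dual cone is $\sigma\cap\mathrm{span}(\tau)=\tau$. By the Fourier--Sato formulas the contribution of this summand to $\mu_x\Moi(\cE)$ is a multiple of $i_{-\tau}$, supported on $-\tau$; and $\chi\in M$ combined with $x-\chi\in\tau^\perp$ forces $x\in\tau^\perp+M$. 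Summing over all summands of \eqref{eq:1} gives $\SS(\Moi(\cE))\subseteq\LS$.

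For (4)$\Rightarrow$(2), I use the equivalence (2)$\Leftrightarrow$(3) and prove (4)$\Rightarrow$(3). Since $f\in\PolF(M_\bR;\cH_\Sigma)$, at each $x$ the germ $\nu_x f$ is polyhedral with respect to the central hyperplane arrangement $\{\alpha^\perp:\alpha\text{ a ray of }\Sigma,\ x\in\alpha^\perp+M\}$; dually, $\mu_x f$ is polyhedral with respect to the fan $\mathcal{F}_x$ in $N_\bR$ generated by these rays and their negatives. The hypothesis $\SS(f)\subseteq\LS$ forces $\supp(\mu_x f)\subseteq\bigcup_{\tau:\,x\in\tau^\perp+M}(-\tau)$. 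The key combinatorial point is that every ray of $\mathcal{F}_x$ lying inside such a $-\tau$ is of the form $-\alpha$ for some ray $\alpha$ of $\tau$, since any ray of $\Sigma$ contained in $\tau$ must be a ray of $\tau$ (by the fan axioms); hence the restriction of $\mathcal{F}_x$ to each $-\tau$ in the support coincides with the face decomposition of $-\tau$ inside $-\Sigma$. Therefore $\mu_x f$ is constant along the interior of each cone of $-\Sigma$, which is (3).

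The hard part will be the last step: the local polyhedral structure of $\mu_x f$ inherited from $\cH_\Sigma$-constructibility could a priori subdivide cones of $-\Sigma$, and one must verify that the support restriction from $\LS$ prevents this. Once the combinatorial matching above is established, the corollary follows.
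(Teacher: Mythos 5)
The proposal agrees with the paper on the Fourier--Sato argument for $(2)\Leftrightarrow(3)$, but for $(3)\Leftrightarrow(4)$ it departs from the paper's route and contains a genuine, self-acknowledged gap.

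The paper handles $(3)\Leftrightarrow(4)$ by a single structural observation: a polyhedral $f$ is $\cH_\Sigma$-constructible if and only if $\SS(f)\subset\Lambda_{\cH}$, and $\LS\subset\Lambda_{\cH}$. Both containments of sets of functions then follow from the microlocal side directly. Your proof replaces this with a two-pronged argument: $(2)\Rightarrow(4)$ via the global decomposition~\eqref{eq:1} of $\Moi(\cE)$ (which requires first invoking Theorem~\ref{thm:morelli2} to write an arbitrary $f$ in (2) as $\Moi$ of a virtual bundle), and $(4)\Rightarrow(3)$ by a local fan analysis. The first of these prongs, while it uses more machinery than the paper does, is plausibly correct. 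The second is where the proof breaks down, and you say so yourself in the final paragraph: \emph{``the local polyhedral structure of $\mu_x f$ inherited from $\cH_\Sigma$-constructibility could a priori subdivide cones of $-\Sigma$, and one must verify that the support restriction from $\LS$ prevents this.''} That verification is the substance of the implication and is not supplied.

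Moreover, the ``key combinatorial point'' you assert on the way to that verification is not correct as stated. You describe the local microlocal fan $\mathcal{F}_x$ as ``the fan $\ldots$ generated by these rays and their negatives,'' and claim its rays contained in $-\tau$ are precisely $-\alpha$ for $\alpha$ a ray of $\tau$. But $\mathcal{F}_x$ is (up to sign) the normal fan of the zonotope $\sum_i[-\alpha_i,\alpha_i]$, and the rays of that normal fan are the normals to hyperplanes spanned by subsets of the $\alpha_i$; already in dimension $3$ these are generally not among $\pm\alpha_i$ (e.g.\ with $\alpha_i=e_1,e_2,e_3,e_1+e_2+e_3$ one gets rays such as $\pm(e_1-e_2)$). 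So the statement ``every ray of $\mathcal{F}_x$ lying inside $-\tau$ is $-\alpha$ for $\alpha$ a ray of $\tau$'' needs a different justification, and the appeal to ``any ray of $\Sigma$ contained in $\tau$ must be a ray of $\tau$'' does not apply since the offending ray need not be a ray of $\Sigma$ at all. The paper's argument sidesteps this entire combinatorial difficulty by working at the level of singular supports and characteristic varieties; your $(4)\Rightarrow(3)$ argument would need to actually confront and resolve it.
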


\begin{proof}
The groups (2), (3), and (4) are all subgroups of $\PolF(M_\bR)$.  That (2) and (3) coincide follows from the fact that $\nu_x$ and $\mu_x$ differ by a Fourier-Sato transform, which is an isomorphism, and from the fact that the Fourier transform of a costandard indicator function associated to $\sigma^\vee$ is a standard indicator function associated to $-\sigma$.  To see that (3) and (4) coincide, note that $f$ is constructible with respect to $\cH$ if and only if $\SS(f)$ is contained in the \emph{characteristic variety} $\Lambda_\cH$ of the stratification $\cH_\Sigma$
$$\Lambda_\cH:= \bigcup_{s \text{ a cell of } \cH} T^*_s V$$
and that $\Lambda_\cH$ contains $\LS$.
\end{proof}

\section{Review of the coherent-constructible correspondence (CCC)}
\label{sec:CCCrev}

A basic consequence of the coherent-constructible correspondence is the following:

\begin{theorem}
For every equivariant vector bundle $\cE$ on $X$, there is a complex $\kappa(\cE)$ of sheaves on $M_\bR$, together with natural isomorphisms
$$\Hom(\cE,\cF) \cong \Hom(\kappa(\cE),\kappa(\cF))$$
for any pair of equivariant vector bundles.  The homs on the left are taken in the category of equivariant vector bundles, and the homs on the right are taken in the homotopy category of complexes of sheaves.
\end{theorem}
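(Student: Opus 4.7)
The plan is to reduce to the affine case, where the functor admits an explicit description, and then globalize via a \v Cech-style resolution. First I would define $\kappa$ on the generating equivariant line bundles on affine patches: for each cone $\sigma \in \Sigma$ and each character $\chi \in M$, I set $\kappa(\cO_{U_\sigma}(\chi))$ to be the constant sheaf on an open translate of the interior of $\sigma^\vee$ (translated by $\chi$, according to the conventions of Section \ref{sec:filt}), extended by zero to all of $M_\bR$ and placed in cohomological degree $-\dim M_\bR$. The shift is calibrated so that the underlying class on Grothendieck groups is the costandard indicator function appearing in Morelli's formula (\ref{eq:1}), which aligns the lift $\kappa$ with Morelli's map $\Moi$.

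Next I would verify the Hom isomorphism on these generators. On the coherent side, the convention recalled in Section \ref{sec:filt} gives
$$\Hom_{\cQ_T(U_\sigma)}(\cO(\chi), \cO(\psi)) \cong \sfR$$
when $\psi - \chi$ lies in $\sigma^\vee$ and zero otherwise, the nonzero case being generated by multiplication by the monomial of that weight. On the constructible side, a direct computation with extensions by zero of open translated dual cones yields the same answer: Hom between constant sheaves on nested open translates is freely generated by the canonical map of the smaller into the larger, and vanishes when the nesting fails. Agreement is tracked character-by-character via the $M$-grading.

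Third, I would globalize by a \v Cech procedure. The affine cover $\{U_\sigma\}_{\sigma \in \Sigma}$ of $X$ has intersections $U_\sigma \cap U_\tau = U_{\sigma \cap \tau}$, so one can resolve an equivariant vector bundle $\cE$ by a \v Cech complex whose terms are direct sums of equivariant line bundles on these overlaps. Applying $\kappa$ termwise yields the cochain complex $\kappa(\cE)$ of constructible sheaves on $M_\bR$. To compare $\Hom(\cE,\cF)$ with $\Hom(\kappa(\cE),\kappa(\cF))$, I would form the double complex built from the two \v Cech resolutions on both sides; by the affine computation each term matches across the correspondence, and the induced isomorphism on total complexes descends to the homotopy category.

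The main obstacle will be independence of all the choices involved---different \v Cech resolutions, different orderings of the cover, different line-bundle presentations of the same vector bundle---and compatibility with composition of Hom. This requires a quasi-isomorphism argument in the spirit of cohomological descent, and is essentially the step at which \cite{fltz} upgrades $\kappa$ to a (dg-)functor defined on a larger category. A subsidiary technical point is that higher sheaf cohomology of equivariant line bundles on each $U_\sigma$ must vanish so that the \v Cech complex genuinely computes Hom on the coherent side, which is automatic from affineness of $U_\sigma$; together these ingredients reduce the global claim to the affine calculation of the previous step.
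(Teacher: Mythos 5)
Your approach is essentially the one in the paper: define the correspondence on the generators $\Theta'(\sigma,\chi) = j_{\sigma*}\cO_{U_\sigma}(\chi) \leftrightarrow \Theta(\sigma,\chi)$ (extension-by-zero of the constant sheaf on $(\chi+\sigma^\vee)^\circ$, shifted), match Homs on generators, and globalize by the canonical \v Cech resolution $\vC^\bullet(\cE)$ with terms $j_{\sigma*}j_\sigma^*\cE$. Two points deserve correction.

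First, a sign slip: with the conventions of Section \ref{sec:filt}, a nonzero equivariant map $\cO(\chi) \to \cO(\psi)$ on $U_\sigma$ exists iff $\chi - \psi \in \sigma^\vee$ (the generator $1 \in (\cO(\chi))_\chi$ must land in $(\cO(\psi))_\chi$, which is nonzero iff $\chi - \psi \in \sigma^\vee$), not iff $\psi - \chi \in \sigma^\vee$. This is also what makes the constructible side match: $(\chi+\sigma^\vee)^\circ \subset (\psi+\sigma^\vee)^\circ$ iff $\chi - \psi \in \sigma^\vee$.

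Second, the lemma you need for the homotopy-category comparison is not ``higher cohomology of line bundles on affine $U_\sigma$ vanishes'' (that only guarantees $\vC^\bullet(\cE)$ resolves $\cE$); it is the vanishing of \emph{higher Ext groups between the $\Theta'$-objects themselves}, and the matching vanishing of higher Ext between the $\Theta$-objects. This is exactly Proposition 3.3 of the paper (quoted from \cite{fltz}): $\Ext^i(\Theta'(\sigma,\phi),\Theta'(\tau,\psi))$ and $\Ext^i(\Theta(\sigma,\phi),\Theta(\tau,\psi))$ are each $\sfR$ when $(\sigma,\phi)\leq(\tau,\psi)$ and $i=0$, and zero otherwise. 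Once you have this, Hom in the homotopy category of $\Theta$- (resp.\ $\Theta'$-) complexes agrees with derived-category Hom, and since the \v Cech complex $\vC^\bullet$ is a canonical resolution---no choices beyond a once-and-for-all orientation of the cones---your ``main obstacle'' of choice-independence disappears. The rest of your argument, and the delegation of the dg-level upgrade to \cite{fltz}, mirrors the paper.
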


The complete result says more.  For instance, the higher Ext groups between vector bundles $\cE$ and $\cF$ are naturally isomorhpic to the Ext groups (or what are sometimes called hyper-Ext groups) between the complexes $\kappa(\cE)$ and $\kappa(\cF)$.  A more modern formulation of this is that there is a fully faithful embedding of derived categories, from the derived category of equivariant coherent sheaves on $X$ (or perfect complexes if $X$ is singular) to the derived category of constructible sheaves on $M_\bR$.  The main result of \cite{fltz} is a characterization of which constructible sheaves can appear, similar to Morelli's characterization \ref{thm:morelli2} of the image of $\mathit{mo}$.

Although this result gives us a good understanding of which complexes of constructible sheaves on $X$ come from \emph{complexes} of vector bundles, it does not say which ones come from genuine vector bundles, i.e. complexes concentrated in a single homological degree.  We turn to this question in sections \ref{sec:four} and \ref{sec:five}.

\subsection{$\Theta$-complexes} In this section we define the functor $\kappa$.  It is a categorical version of the definition of $\mathit{mo}$ given in equation \ref{eq:1}.

Let $\bGamma(M_\bR,\Sigma)$ denote the set of pairs $(\sigma,\chi)$ where $\sigma$ is a cone in $\Sigma$ and $\chi \subset M_\bR$ is an integral coset of $\sigma^\perp$.  (``Integral'' means that $\chi$ passes through a lattice point.)  The set $\bGamma(M_\bR,\Sigma)$ indexes a class of constructible sheaves on $M_\bR$ and a class of equivariant quasicoherent sheaves on $X$.

\begin{definition}[$\Theta(\sigma,\chi)$]
For each $(\sigma,\chi) \in \bGamma(\Sigma,M)$ we have the open set $(\chi + \sigma^\vee)^\circ \subset M_\bR$, consisting of all those $v \in M_\bR$ with $\langle v,-\rangle > \langle \chi,-\rangle$ when evaluated at elements of $\sigma$.  The object $\Theta(\sigma,\chi)$ is the extension-by-zero of the rank one constant sheaf $\sfR$ on$(\chi + \sigma^\vee)^\circ$.
\end{definition}

\begin{remark}
This differs from the definition of $\Theta(\sigma,\chi)$ given in \cite{fltz}: there, we put the Verdier dualizing sheaf on the closed set $\chi + \sigma^\vee$ and extended by zero.  Choosing an orientation of $M_\bR$ identifies (up to a shift) these two veresions of $\Theta(\sigma,\chi)$.
\end{remark}

\begin{definition}
For each $(\sigma,\chi) \in \bGamma(\Sigma,M)$, let $\Theta'(\sigma,\chi) = j_*(\cO_{U_\sigma}(\chi))$ where $j$ is the inclusion of $U_\sigma$ into $X$ and $\cO_{U_\sigma}(\chi)$ denotes the structure sheaf on $U_\sigma$ endowed with an equivariant structure $\chi$.
\end{definition}

The set $\bGamma(\Sigma,M)$ has a partial order: we say $(\sigma,\chi) \leq (\tau,\psi)$ whenever $\tau \subset \sigma$ and $\phi-\psi \in \tau^\vee$.

\begin{proposition}[{\cite[Proposition 3.3]{fltz}}]
We have canonical isomorphisms
$$\begin{array}{ccc}
\Ext^i(\Theta(\sigma,\phi),\Theta(\tau,\psi)) & = & \bigg\{ 
\begin{array}{cc}
\sfR & \text{if $(\sigma,\phi) \leq (\tau,\psi)$ and $i = 0$}\\
0 & \text{ otherwise}
\end{array} \\
\Ext^i(\Theta'(\sigma,\phi),\Theta'(\tau,\psi)) & = & \bigg\{ 
\begin{array}{cc}
\sfR & \text{if $(\sigma,\phi) \leq (\tau,\psi)$ and $i = 0$}\\
0 & \text{ otherwise}
\end{array} \\
\end{array}
$$
\end{proposition}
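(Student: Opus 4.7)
The plan is to prove the two Ext computations separately, using the adjunction for an open inclusion in each setting.

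On the constructible side, set $U = (\phi+\sigma^\vee)^\circ$, $U' = (\psi+\tau^\vee)^\circ$, and $V = U \cap U'$. Open-inclusion adjunction identifies
\[
\Ext^i(\Theta(\sigma,\phi),\Theta(\tau,\psi)) = H^i(U,(i_V)_!\sfR_V),
\]
where $i_V \colon V \hookrightarrow U$ is the open inclusion and $(i_V)_!\sfR_V$ is the restriction of $\Theta(\tau,\psi)$ to $U$. Applying $\RGa(U,-)$ to the recollement triangle
\[
(i_V)_!\sfR_V \to \sfR_U \to (i_W)_*\sfR_W
\]
(with $W := U \setminus V$) presents this Ext as the fiber of the restriction $\RGa(U,\sfR) \to \RGa(W,\sfR)$. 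Convexity of $U$ gives $\RGa(U,\sfR) = \sfR[0]$. The answer splits into three cases: (a) if $(\sigma,\phi) \leq (\tau,\psi)$, equivalently $U \subset U'$, then $V = U$ and $W = \emptyset$, yielding the fiber $\sfR[0]$; (b) if $V = \emptyset$, the restriction is the identity and the fiber vanishes; (c) if $V$ is proper and nonempty, I must show that $W$ is connected with trivial higher cohomology and that the restriction is an isomorphism, which gives zero.

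On the coherent side, because $j_\tau \colon U_\tau \hookrightarrow X$ is an affine open inclusion, $\bfR j_{\tau,*} = j_{\tau,*}$ and the adjunction
\[
\bfR\Hom_X(\cG, j_{\tau,*}\cF) = \bfR\Hom_{U_\tau}(\cG|_{U_\tau}, \cF)
\]
applies. Combined with $\Theta'(\sigma,\phi)|_{U_\tau} = l_*\cO_{U_{\sigma\cap\tau}}(\phi)$, where $l \colon U_{\sigma\cap\tau} \hookrightarrow U_\tau$ is the inclusion of the toric chart of the intersection, this reduces the Ext to an $M$-graded $\Ext$ over $R_\tau := \sfR[\tau^\vee \cap M]$ between the shifted localization $R_{\sigma\cap\tau}(\phi)$ and the shifted free module $R_\tau(\psi)$, taking the degree-zero ($T$-invariant) part. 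A direct inspection of degree-preserving module maps shows that their space is one-dimensional exactly when $R_{\sigma\cap\tau} = R_\tau$ (i.e.\ $\tau \subset \sigma$) and $\phi - \psi \in \tau^\vee$, matching the poset condition, and is zero otherwise. The higher $\Ext^{i,T}$ vanish: trivially when $R_{\sigma\cap\tau} = R_\tau$ (since the module is free), and otherwise by a graded local cohomology argument, the prototype being $\Ext^*_{k[x]}(k[x,x^{-1}], k[x])_0 = 0$.

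The main technical obstacle is case (c) on the constructible side: showing $W = U \setminus U'$ is cohomologically trivial. I would write $M_\bR \setminus U' = \bigcup_\alpha H_\alpha$ as a finite union of closed half-spaces indexed by the rays $\alpha$ of $\tau$, and cover $W$ by the convex sets $W_\alpha := U \cap H_\alpha$; since every nonempty multi-intersection of the $W_\alpha$'s is convex (hence contractible), a Mayer--Vietoris/nerve-theoretic argument reduces the claim to showing that the nerve of this cover is contractible. This uses crucially the assumption that $\sigma$ and $\tau$ both lie in the fan $\Sigma$ (so that $\sigma \cap \tau$ is a face of each), which prevents pathological configurations where $W$ could be disconnected.
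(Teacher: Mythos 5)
The paper simply cites \cite[Proposition 3.3]{fltz} for this statement and gives no proof of its own, so there is nothing in the present text to compare against directly; let me instead evaluate the proposal on its merits.

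Your reduction on the constructible side is correct: by open-inclusion adjunction the $\Ext$ becomes the fiber of the restriction $\RGa(U) \to \RGa(W)$ with $W = U \setminus U'$, and cases (a), (b) are handled cleanly. The real content, which you rightly isolate as the main obstacle, is showing in case (c) that $\RGa(U) \to \RGa(W)$ is a quasi-isomorphism. You propose this via a nerve argument but leave the crucial step — contractibility of the nerve — unjustified, and the remark that this ``uses crucially'' the fan hypothesis is a hint rather than an argument. The fact you need is actually stronger and cleaner than general nerve contractibility: writing $W_\alpha = U \cap H_\alpha$ for the rays $\alpha$ of $\tau$, one can show that $\bigcap_{\alpha : W_\alpha \neq \emptyset} W_\alpha \neq \emptyset$, so the nerve is a full simplex, and equivalently $W$ is star-shaped from any point of that intersection. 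To prove it: $W_\alpha \neq \emptyset$ iff $\alpha \notin \sigma$ (a ray of $\tau$ lying in $\sigma$ gives $\langle v,\alpha\rangle > 0$ on all of $(\sigma^\vee)^\circ$, while a ray not in $\sigma$ admits $v\in(\sigma^\vee)^\circ$ with $\langle v,\alpha\rangle < 0$, which after scaling gives a point of $W_\alpha$). Now suppose the open convex cones $(\sigma^\vee)^\circ$ and $D := \{v : \langle v,\alpha_i\rangle < 0 \text{ for all } i\}$ (where the $\alpha_i$ are the rays of $\tau$ not in $\sigma$) were disjoint. Hahn--Banach separation by a hyperplane through the origin with normal $n \neq 0$ forces $n \in \sigma$ (from the first cone) and $n = \sum c_i \alpha_i$ with $c_i \geq 0$ (from the second). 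Then $n \in \sigma \cap \tau$, which is a \emph{face} of $\tau$ because $\sigma, \tau \in \Sigma$; the defining support functional of that face then annihilates $n$ and hence each $\alpha_i$ with $c_i > 0$, forcing those $\alpha_i$ into the face $\sigma \cap \tau \subset \sigma$, contradicting $\alpha_i \notin \sigma$. So $(\sigma^\vee)^\circ \cap D \neq \emptyset$, and after translation and scaling this gives a point in $\bigcap W_{\alpha_i}$. This is the missing lemma; without it the proof is incomplete. (A minor point: the nerve theorem in its standard form wants an open cover; it is cleaner to bypass it with the star-shapedness argument above, since once a common point exists every segment from it to a point of some $W_\alpha$ stays inside that convex $W_\alpha$.)

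On the coherent side, the reduction via the affine adjunction $\bfR\Hom_X(-, j_{\tau,*}\cF) = \bfR\Hom_{U_\tau}((-)|_{U_\tau}, \cF)$ and the identification $\Theta'(\sigma,\phi)|_{U_\tau} = (j_{\sigma\cap\tau,\tau})_*\cO_{U_{\sigma\cap\tau}}(\phi)$ is correct, and the $\Ext^0$ computation is straightforward. But the vanishing of the higher graded $\Ext$ when $R_{\sigma\cap\tau} \neq R_\tau$ is asserted with only the one-variable prototype $\Ext^*_{k[x]}(k[x,x^{-1}],k[x])_d = 0$ as justification; you should supply the multivariable argument (e.g.\ realize $R_{\sigma\cap\tau}$ as a filtered colimit of graded free $R_\tau$-modules, note the graded $\bfR\Hom$ becomes a derived inverse limit that is degreewise eventually zero, or else use a Koszul/local-cohomology computation). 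As written, both halves of the proof have the correct skeleton but delegate their hardest steps to sketches that need filling in before the proof is complete.
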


A consequence of the proposition is that we can model the derived category $D^b(\cQ_T(X))$ and $D^b(\Sh(M_\bR))$---at least, full subcategories of them---by complexes of the objects $\Theta$.

\begin{definition}
A $\Theta$-complex (resp. $\Theta'$-complex) is a bounded complex of sheaves on $M_\bR$ each term of which is isomorphic to something of the form $\bigoplus_{i = 1}^k \Theta(\sigma_i,\chi_i)$ (resp $\bigoplus_{i = 1}^k \Theta'(\sigma_i,\chi_i)$).  We let $\ltr$ (resp. $\ltrp$) denote the category of $\Theta$-complexes and chain maps (resp. $\Theta'$-complexes and chain maps) and we let $h\ltr$ and $h\ltrp$ denote the categories with the same objects whose homomorphisms are chain homotopy classes of maps.
\end{definition}

\begin{remark}
The notation $\ltr$ and $\ltrp$ means something slightly different in \cite{fltz}---a dg enrichment of the categories of chain complexes we are considering here.
\end{remark}

\begin{theorem}[{\cite[Theorem 3.4]{fltz}}]
\begin{enumerate}
\item
The tautological functors $\ltrp \to D^b(\cQ_T(X))$ and $\ltr \to D^b(\Sh(M_\bR)$ induce full embeddings of triangulated categories $h\ltrp \hookrightarrow D^b(\cQ_T(X))$ and $h\ltr \hookrightarrow D^b(\Sh(M_\bR))$.
\item Up to unique natural isomorphism, there is a unique equivalence of categories $\ltr \cong \ltrp$ that carries $\Theta'(\sigma,\chi)$ to $\Theta(\sigma,\chi)$ and that for $(\sigma,\phi) \leq (\tau,\psi)$ carries the inclusion map $\Theta'(\sigma,\phi)$ to the inclusion map $\Theta(\sigma,\phi) \to \Theta(\tau,\psi)$.
\end{enumerate}
\end{theorem}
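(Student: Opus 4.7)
The plan is to derive both assertions from the preceding proposition, which shows that the $\Theta$ and $\Theta'$ families are each governed by the same poset-indexed Hom calculus: every nonzero Ext sits in degree zero and is one-dimensional over $\sfR$, with nonvanishing exactly for comparable pairs in $\bGamma(\Sigma,M)$. For part (1), I would invoke the standard lemma: if $\cC$ is a full additive subcategory of an abelian category $\cA$ with $\Ext^i_\cA(X,Y) = 0$ for all $X,Y \in \cC$ and all $i > 0$, then the tautological functor $K^b(\cC) \to D^b(\cA)$ is fully faithful. This can be proved by induction on the length of a complex, using a hyperext spectral sequence that degenerates at $E_2$ under the Ext-vanishing hypothesis, or directly by representing morphisms in $D^b(\cA)$ as roofs and using Ext-vanishing to lift them uniquely up to chain homotopy. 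Applying this with $\cA = \cQ_T(X)$ and $\cC$ the additive envelope of $\{\Theta'(\sigma,\chi)\}$ gives the first full embedding; applying it with $\cA = \Sh(M_\bR)$ and $\cC$ the additive envelope of $\{\Theta(\sigma,\chi)\}$ gives the second.

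For part (2), both $\{\Theta(\sigma,\chi)\}$ and $\{\Theta'(\sigma,\chi)\}$ realize diagrams of shape $\bGamma(\Sigma,M)$ in which each Hom space is either zero or a copy of $\sfR$ generated canonically by a geometric ``inclusion'' map: on the constructible side, the literal inclusion of constant sheaves extended by zero from the nested open sets $(\phi + \sigma^\vee)^\circ \subset (\psi + \tau^\vee)^\circ$; on the coherent side, the restriction map along $U_\tau \hookrightarrow U_\sigma$ applied to the pushforwards of equivariantly twisted structure sheaves. Composition of two such canonical generators yields the canonical generator for the composite order relation (in both families this is just composition of geometric restrictions), so the two $\sfR$-linear diagrams are strictly isomorphic. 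Matching generator to generator produces a canonical $\sfR$-linear equivalence $F_0$ between the additive envelopes of the two families, carrying $\Theta'(\sigma,\chi)$ to $\Theta(\sigma,\chi)$ and inclusions to inclusions. Extending $F_0$ termwise to chain complexes and chain maps yields the equivalence $\ltr \cong \ltrp$.

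The main obstacle is the uniqueness claim. Here the one-dimensionality of each nonvanishing Hom space does the work: any equivalence with the prescribed behavior on generating objects and inclusions acts by a fixed scalar (namely $1$) on every basis generator of a Hom space, hence agrees with $F_0$ on all morphisms in the additive envelope, and therefore on all chain maps by functoriality. Similarly, any natural isomorphism between two such equivalences is determined by its components at the generators $\Theta(\sigma,\chi)$; these components must be scalars compatible with the inclusion maps into and out of $\Theta(\sigma,\chi)$, and the inclusion-preserving constraint forces every such scalar to be $1$, so the natural isomorphism is the identity. This gives uniqueness up to unique natural isomorphism, completing the plan.
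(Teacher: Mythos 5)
The paper itself does not prove this theorem --- it is quoted verbatim from \cite{fltz} (Theorem 3.4) and used as a black box in the review section --- so there is no in-paper argument to compare yours against; I will just evaluate your proposal. (One presentational note: \cite{fltz} formulates $\ltr$ and $\ltrp$ as dg categories, so its proof is phrased at that level; your triangulated-category-level argument is a sensible translation for the setting of this paper.)

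For part (1), your plan is correct. The $\Ext$-vanishing in positive degrees recorded in the preceding proposition is exactly the hypothesis of the standard lemma: if $\cC$ is a full additive subcategory of an abelian category $\cA$ whose objects are mutually $\Ext^{>0}$-orthogonal, then $K^b(\cC)\to D^b(\cA)$ is fully faithful, proved by d\'evissage via stupid truncations and the five-lemma. For part (2), you correctly identify the combinatorial content --- both families realize the same $\sfR$-linear diagram of shape $\bGamma(\Sigma,M)$ with each nonzero $\Hom$ free of rank one on a canonical inclusion generator, and composition of generators is a generator --- and the termwise extension to chain complexes and chain maps is the right way to build the equivalence.

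The one genuine gap is in your uniqueness argument. You assert that compatibility with inclusions forces each component $\eta_{\Theta'(\sigma,\chi)}$ of a natural isomorphism $\eta:F\Rightarrow F'$ to equal the identity. But the constraint that $F$ and $F'$ send inclusions to inclusions is a constraint on the \emph{functors}; what it gives for $\eta$ is only naturality, i.e.\ the scalars $\lambda_{\sigma,\chi}$ with $\eta_{\Theta'(\sigma,\chi)}=\lambda_{\sigma,\chi}\cdot\mathrm{id}$ must agree along comparable pairs of the poset $\bGamma(\Sigma,M)$. Since $(\{0\},M_\bR)$ is a maximum element of this poset, it is connected, so $\lambda$ is a single global unit in $\sfR$ --- but it need not be $1$, and any such $\lambda$ yields a nontrivial natural automorphism of $F$. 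To get genuine uniqueness of the natural isomorphism one must normalize, e.g.\ require $\eta$ to restrict to the identity on the objects $\Theta'(\sigma,\chi)$; with that normalization the remainder of your argument, which is essentially Proposition~\ref{prop:nat} of this paper, does close the proof. Without the normalization, the uniqueness claim fails whenever $\sfR^\times\neq\{1\}$.
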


\begin{definition}
Let $\kappa:h\ltrp \to h\ltr$ denote the shift-by-dim($M_\bR)$ of the equivalence of the theorem. i.e. $\kappa(\Theta'(\sigma,\chi)) := \Theta(\sigma,\chi)[\dim(M_\bR)]$.
\end{definition}

When comparing coherent constructions to constructible ones, we will frequently use the following fact:

\begin{proposition}
\label{prop:nat}
Let $u$ and $v$ be two exact functors from $h\ltrp$ to a second triangulated category $D$.  Any natural transformation $n:u \to v$ is determined by its values on the objects $\Theta'(\sigma,\chi)$, and conversely any system of maps $n_{\sigma,\chi}:u(\Theta'(\sigma,\chi)) \to v(\Theta'(\sigma,\chi))$ in $D$ that make the squares
$$
\xymatrix{
u(\Theta'(\sigma,\chi)) \ar[r] \ar[d] & u(\Theta'(\tau,\psi)) \ar[d]\\
v(\Theta'(\sigma,\chi)) \ar[r] & v(\Theta'(\tau,\psi))
}
$$
commute extends to a unique natural transformation $n:u \to v$.  The natural transformation is an isomorphism if and only if each $n_{\sigma,\chi}$ is an isomorphism.
\end{proposition}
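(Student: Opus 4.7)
The plan is to exploit the very simple structure of $h\ltrp$ recorded in the preceding proposition: since $\Ext^i(\Theta'(\sigma,\phi),\Theta'(\tau,\psi))$ vanishes for $i \neq 0$ and is at most one-dimensional for $i = 0$ (spanned by the canonical inclusion when $(\sigma,\phi)\leq(\tau,\psi)$), every object $C$ of $h\ltrp$ is by definition a bounded chain complex whose terms $C^k$ are finite direct sums of generators and whose differentials are matrices of scalar multiples of these canonical inclusions. In particular, $h\ltrp$ is built from the $\Theta'(\sigma,\chi)$ by iterated cones and shifts, and morphisms between complexes are represented, modulo homotopy, by matrices of scalar multiples of inclusions.

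For the uniqueness statement (two natural transformations $n,n'\colon u\to v$ agreeing on all generators must coincide everywhere), I would induct on the number of nonzero terms of $C$. The base case of a single-degree complex reduces to agreement on generators using that exact functors preserve finite direct sums and commute with shifts. For the inductive step, view $C$ as the middle of a brutal truncation triangle
$$T \longrightarrow C \longrightarrow C^k[-k] \longrightarrow T[1]$$
with $T$ and $C^k[-k]$ strictly shorter, apply $u$ and $v$, and compare. Naturality together with agreement on the outer vertices (by induction) forces the difference $n_C-n'_C$ to factor simultaneously through certain cofibers and fibers; standard long-exact-sequence bookkeeping, seeded by the $\Ext$-vanishing of generators just recorded, then forces $n_C=n'_C$.

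For existence, I would run the same induction in reverse. The data $\{n_{\sigma,\chi}\}$ extends first to single-term complexes by direct sums of the $n_{\sigma_i,\chi_i}$ (well-defined since exact functors preserve direct sums and the hypothesized squares cover identity inclusions), then to arbitrary $C$ by choosing a fill-in along the brutal truncation triangle; the rigidification established for uniqueness makes the fill-in canonical and independent of the truncation chosen. Naturality along an arbitrary morphism $f\colon C\to C'$ in $h\ltrp$ is then verified by lifting $f$ to a chain map between $\Theta'$-complexes and reducing the naturality square to the hypothesized squares on each matrix entry. The isomorphism clause follows by induction on width using the two-out-of-three property for distinguished triangles.

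The main obstacle is the recurring danger of triangulated-category arguments: the axiom TR3 asserts existence but not uniqueness of fill-ins. Here the rigidification is provided precisely by the vanishing portion of the preceding proposition, whose content propagates into the vanishing of the obstruction $\Hom$-groups in $D$ via the structural triangles produced by $u$ and $v$. Without this vanishing the inductive scheme collapses, which is why the essential input is the explicit calculation of $\Ext^*$ between the $\Theta'(\sigma,\chi)$.
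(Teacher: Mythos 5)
The paper offers no proof of this proposition --- the statement is followed immediately by the next subsection --- so there is no authorial argument to compare against; I will assess the proposal on its own merits.

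Your overall plan is the natural one: induct on the number of nonzero terms, split off a brutal truncation triangle, and use TR3 to propagate the transformation. You also correctly flag the central danger: TR3 asserts existence but not uniqueness of the fill-in. However, the rigidification you offer does not hold up. You claim that the vanishing of $\Ext^{\neq 0}(\Theta'(\sigma,\phi),\Theta'(\tau,\psi))$ ``propagates into the vanishing of the obstruction $\Hom$-groups in $D$.'' This is the gap. The ambiguity in the TR3 fill-in for $n_C$ along a triangle $T \to C \to C^k[-k] \to T[1]$ lives in $\Hom$-groups of the form $\Hom_D\bigl(u(C^k)[1-k], v(T)\bigr)$, i.e.\ in the \emph{target} category $D$ between objects that $u$ and $v$ produce from \emph{different} sides of the triangle. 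The $\Ext$-vanishing of the preceding proposition is an assertion about morphism groups in $h\ltrp$ (the source); since $u \neq v$ in general and $D$ is arbitrary, there is no map taking the source $\Ext$-groups to these mixed $\Hom$-groups in $D$, and the latter can be nonzero even when the former all vanish. Your long-exact-sequence bookkeeping therefore has no vanishing to seed it.

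What actually has to be exploited is the full strength of naturality: the candidate $n_C$ must commute with \emph{every} morphism in $h\ltrp$ involving $C$ (including connecting maps, the second brutal truncation triangle from the other end, and all degree-shifted $\Hom$'s), not just the two morphisms in a single truncation triangle. Alternatively, one can observe that the result is a formality at the dg level --- the dg category $\ltrp$ is freely generated by the $\Theta'$-objects precisely because of the $\Ext$-vanishing, so dg-natural transformations between dg-lifts of $u$ and $v$ are determined by their values on generators --- which is consistent with the remark in the paper that $\ltrp$ here is a de-enriched version of the dg category used in \cite{fltz}. As written, your argument conflates obstruction groups in $h\ltrp$ with obstruction groups in $D$, and the induction does not close.
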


\subsection{Vector bundles, perfect complexes, and $\Theta'$-complexes}
\label{sec:cech}

Let $\cE$ be a vector bundle on $X$.  For each $\sigma \in \Sigma$ we may form the quasicoherent sheaf $j_{\sigma *} j_\sigma^* \cE$, which is necessarily of the form $\bigoplus_{i = 1}^n \Theta'(\sigma,\chi_i)$.  These sheaves naturally form a \v Cech complex quasi-isomorphic to $\cE$, and in this way we can construct a functor from the category of $T$-equivariant vector bundles to $h\ltrp$.  (Or in fact to $\ltrp$.)  We make the following definitions:

\begin{itemize}
\item $\vC^\sigma(\cE)$ is the quasicoherent sheaf $j_{\sigma*}j_\sigma^* \cE$.  If $\tau \subset \sigma$ then we let $\can_{\sigma,\tau}$ denote the canonical adjunction map $j_{\sigma*} j_\sigma^* \cE \to j_{\tau *} j_\tau^* \cE$. 
\item $\vC^k(\cE) = \bigoplus_{\sigma \in \Sigma(\dim(M_\bR)-k)} \vC^\sigma(\cE)$
\end{itemize}
We wish to assemble the $\vC^k$ into a cochain complex $\vC^\bullet$.  A differential $d:\vC^k \to \vC^{k+1}$ may be given by describing its matrix entries $d_{\sigma,\tau}:\vC^\sigma \to \vC^\tau$, where $\sigma$ runs through $k$-codimensional cones and $\tau$ through $\sigma$-codimensional cones.  \begin{itemize}
\item Pick, once and for all, an orientation of each cone in $\Sigma$.  $\vC^\bullet(\cE)$ is the cochain complex whose degree $k$ piece is $\vC^k(\cE)$ and whose differential has matrix entries
 $$d_{\sigma,\tau} = \Bigg\{ \begin{array}{cl}
\can_{\sigma,\tau} & \text{if $\tau \subset \sigma$ and the orientations of $\tau$ and $\sigma$ agree}\\
-\can_{\sigma,\tau} & \text{if $\tau \subset \sigma$ and the orientations of $\tau$ and $\sigma$ disagree}\\
0 & \text{if $\tau$ is not contained $\sigma$}
\end{array}
 $$
\end{itemize}

A standard \v Cech theory argument shows that $\vC^\bullet(\cE)$ is a resolution of $\cE$.  We will abuse notation and write $\kappa(\cE) = \kappa(\vC^\bullet(\cE))$.

\begin{example}
\label{ex:taut}
If $\cL$ is an equivariant ample line bundle on a projective toric variety, then the associated moment polytope is the convex hull in $M_\bR$ of the weights of the $T$-action on the fibers of $\cL$ over fixed points.  There is a similar polytope associated to an anti-ample line bundle (i.e. a line bundle whose dual is ample.)  It is proved in \cite{fltz} that $\kappa$ of an anti-ample line bundle is quasi-isomorphic to the ``standard'', i.e. constant, sheaf on the associated polytope, and that $\kappa$ of an ample line bundle is quasi-isomorphic to the ``costandard'' sheaf (or complex of sheaves) on the interior of the associated polytope.  (If $U$ is an open subset of an oriented $n$-dimensional manifold, then the costandard sheaf is the extension-by-zero of the constant sheaf on $U$, placed in degree $-n$.)  For example, if $\cO(-1)$ and $\cO(1)$ are the tautological and anti-tautological bundles on $\mathbb{P}^2$, endowed with equivariant structures, then the sheaves $\kappa(\cO(1))$ and $\kappa(\cO(-1))$ are as in the diagram
\begin{center}
\includegraphics[scale=.6]{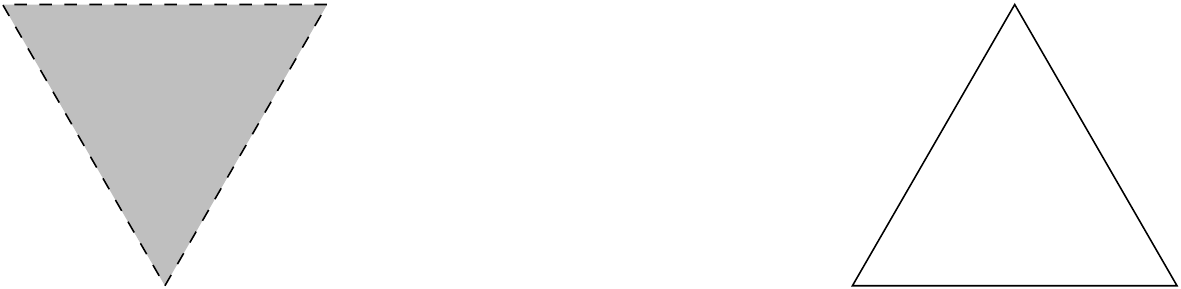}
\end{center}
The left figure is $\kappa(\cO(1))$ and the right figure is $\kappa(\cO(-1))$.  The stalks along the dotted boundary are zero.  The darkly shaded region is a rank one constant sheaf placed in cohomological degree $-2$, and the unshaded region is a rank one constant sheaf placed in cohomological degree zero.
\end{example}

\begin{example}
\label{ex:hirz}
The figure shows the constructible sheaves associated to several equivariant line bundles on the Hirzebruch surface $F_1$ (i.e., on the blowup of $\mathbb{P}^2$ at a $T$-fixed point.)
\begin{center}
\includegraphics[scale=.6]{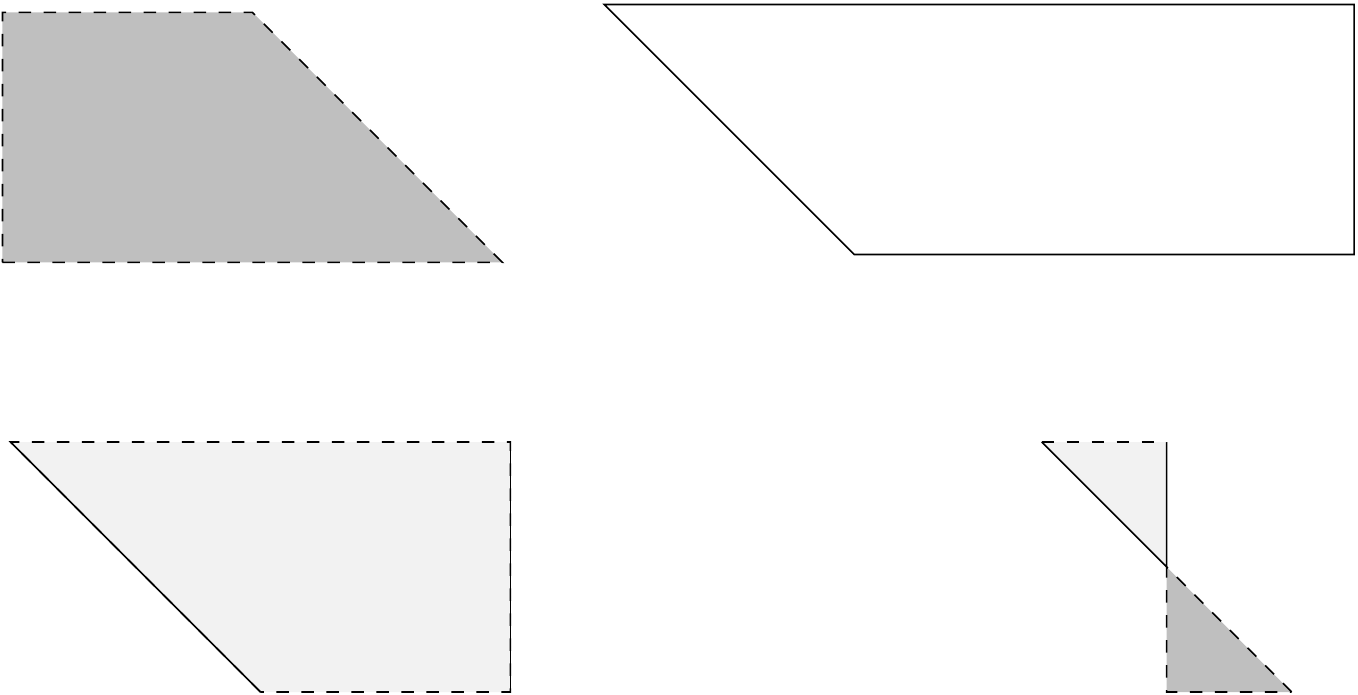}
\end{center}
The top row shows sheaves associated to ample and anti-ample line bundles.  The bottom row shows line bundles that are neither ample nor anti-ample.  The meanings of the dotted lines and shadings are the same as in the previous example; the lighter shade indicates that the sheaf is in cohomological degree $-1$.  For a discussion of how the lower right sheaf behaves in a neighborhood of the point where the two triangles meet (a point which does not necessarily belong to $M_\bZ$), see Example \ref{ex:weirdlocal}.
\end{example}

\subsection{Generic stalks and global sections}

\begin{proposition}
\label{prop:rgaacyc}
Let $\sigma \subset N_\bR$ be a polyhedral cone and let $c \subset M_\bR$ be a coset of $\sigma^\perp$.  Then if we pick an orientation of $M_\bR$ we have canonical isomorphisms
$$
\bfR^i\Gamma_c(\Theta(\sigma,\chi)[\dim(M_\bR)]) \cong \bigg\{
\begin{array}{cc}
\sfR & \text{if $i = 0$} \\
0 & \text{otherwise}
\end{array}
$$
\end{proposition}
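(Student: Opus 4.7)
The plan is to read $\bfR\Gamma_c$ as derived compactly supported global sections, consistent with the subscript-$c$ convention used for $H^*_c$ throughout the paper. On this reading the coset ``$c$'' introduced at the beginning of the statement plays the role of $\chi$ (it is a coset of $\sigma^\perp$, which is exactly what $\chi$ is in the definition of $\bGamma(\Sigma,M)$), and the proposition becomes the acyclicity statement that $\Theta(\sigma,\chi)[\dim M_\bR]$ is concentrated in degree $0$ for compactly supported cohomology, with value $\sfR$.

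First I would unfold the definition: $\Theta(\sigma,\chi)$ is by construction $j_!\sfR_U$, where $j:U\hookrightarrow M_\bR$ is the inclusion of the open set $U=(\chi+\sigma^\vee)^\circ$. Because compactly supported global sections commute with extension by zero along an open inclusion, this reduces the computation to $\bfR\Gamma_c(U;\sfR_U)$. Next I would observe that $U$ is convex, being the $\chi$-translate of the interior of the convex cone $\sigma^\vee$. Provided $\sigma$ is strongly convex (the standard convention for cones in a fan), $\sigma^\vee$ has nonempty interior in $M_\bR$, so $U$ is a nonempty open convex subset of $M_\bR$; in particular $U$ is diffeomorphic to $\bR^n$ where $n=\dim M_\bR$. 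The standard computation of $H^*_c(\bR^n;\sfR)$ then gives $H^i_c(U;\sfR)=\sfR$ for $i=n$ and $0$ otherwise, and shifting by $n$ puts the nonzero group in degree $0$. The orientation of $M_\bR$ restricts to $U$ and fixes the canonical generator of the top compactly supported cohomology group.

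The only subtle point, which I would flag at the outset, is that if $\sigma$ contains a line then $U=\varnothing$ and $\Theta(\sigma,\chi)=0$, so the statement as worded can only hold with the implicit hypothesis that $\sigma$ is strongly convex. I do not expect any real obstacle beyond this: once the reduction to compactly supported cohomology of an open convex subset of $M_\bR$ is made, the rest is the standard Poincar\'e--Lefschetz computation for Euclidean space.
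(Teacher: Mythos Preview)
Your proposal is correct and follows essentially the same route as the paper: reduce via $\RGa_c(M_\bR,j_!\sfR_U)\cong\RGa_c(U,\sfR_U)$ to the compactly supported cohomology of the open convex set $U=(\chi+\sigma^\vee)^\circ$, compute this to be $\sfR$ concentrated in top degree, and shift. Your explicit remark about the strongly convex hypothesis is a point the paper leaves implicit (the cones arising from $\Sigma$ are automatically strongly convex), but otherwise the arguments match.
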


\begin{proof}
For any sheaf of the form $j_! F$, where $j:A \hookrightarrow X$ is a locally closed inclusion, there is a natural isomorphism $\RGa_c(X,j_!F) \cong \RGa_c(A,F)$.  Thus $\bfR^i\Gamma_c(M_\bR;\Theta(\sigma,\chi)) \cong \bfR^i\Gamma_c((\chi + \sigma^\vee)^\circ;\sfR) \cong H^i_c((\chi + \sigma^\vee)^\circ)$.  Since $(\chi + \sigma^\vee)$ is an oriented contractible open manifold of dimension $\dim(M_\bR)$, this group is $\sfR$ if $i = \dim(M_\bR)$, and $0$ otherwise.  The isomorphism $\bfR^i\Gamma_c(\Theta(\sigma,\chi)[\dim(M_\bR)]) \cong \bfR^{i + \dim(M_\bR)}\Gamma_c(\Theta(\sigma,\chi))$ completes the proof.
\end{proof}

As a consequence, when $F^\bullet$ is a $\Theta$-complex, we may compute $\RGa_c(F^\bullet)$ by applying $\Gamma_c$ term by term.

\begin{theorem}
\label{thm:fiber}
Let $\cE$ be an equivariant perfect complex on $X$ and let $x_0$ be a point in the open orbit of $X$.  Then if we pick an orientation of $M_\bR$ there is a natural quasi-isomorphism
$$\cE_{x_0} \cong \RGa_c(\kappa(\cE))$$
\end{theorem}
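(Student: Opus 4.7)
The plan is to invoke Proposition \ref{prop:nat} to build a natural isomorphism between two exact functors defined on $h\ltrp$. Let $F_1 \colon h\ltrp \hookrightarrow D^b(\cQ_T(X)) \to D^b(\sfR\text{-mod})$ be the ``generic fiber'' functor: restrict to the open torus $T \subset X$ and then take the fiber at $x_0$. Let $F_2(K^\bullet) := \RGa_c(\kappa(K^\bullet))$. Both functors are exact: $F_2$ because $\kappa$ is a (shifted) triangulated equivalence and, by Proposition \ref{prop:rgaacyc}, $\RGa_c$ can be computed termwise on a $\Theta$-complex; $F_1$ because each restriction $\Theta'(\sigma,\chi)\vert_T = \cO_T(\chi)$ is a trivial line bundle (and in particular flat), so the fiber-at-$x_0$ functor commutes with quasi-isomorphisms of $\Theta'$-complexes after restriction to $T$.

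I would next compute both functors on the generators. On the coherent side, $F_1(\Theta'(\sigma,\chi)) = \cO_T(\chi)\vert_{x_0} \cong \sfR$ since $x_0 \in T \subset U_\sigma$. On the constructible side, Proposition \ref{prop:rgaacyc} gives $F_2(\Theta'(\sigma,\chi)) = \bfR\Gamma_c(\Theta(\sigma,\chi)[\dim M_\bR]) \cong \sfR$ concentrated in degree zero (using the chosen orientation of $M_\bR$). For each relation $(\sigma,\chi) \leq (\tau,\psi)$ in $\bGamma(M_\bR,\Sigma)$, both $F_1$ and $F_2$ send the structure map $\Theta'(\sigma,\chi) \to \Theta'(\tau,\psi)$ to an isomorphism $\sfR \to \sfR$: for $F_1$ it is multiplication by the unit $x_0^{\chi-\psi}$ (nowhere-vanishing because $\chi-\psi$ is a character of $T$ evaluated at a torus point), and for $F_2$ it is the isomorphism on top-degree compactly-supported cohomology induced by the open inclusion $(\chi+\sigma^\vee)^\circ \subset (\psi+\tau^\vee)^\circ$ of contractible oriented $n$-manifolds.

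To assemble a natural transformation, I would rescale the chosen trivialization of $F_1(\Theta'(\sigma,\chi))$ coherently across the poset by the unit $x_0^\chi$; this makes each $F_1$-image of a structure map equal to the identity $\sfR \to \sfR$. The orientation of $M_\bR$ provides the parallel normalization on the $F_2$ side. With both functors sending every structure map to the identity, Proposition \ref{prop:nat} produces a unique natural isomorphism $F_1 \cong F_2$. Evaluating at the \v Cech resolution $\vC^\bullet(\cE)$: the right-hand side is $F_2(\vC^\bullet(\cE)) = \RGa_c(\kappa(\cE))$ by the definition $\kappa(\cE) := \kappa(\vC^\bullet(\cE))$, while the left-hand side is $F_1(\vC^\bullet(\cE)) \cong \cE_{x_0}$ since every term satisfies $\vC^\sigma(\cE)\vert_T = \cE\vert_T$ (because $T \subset U_\sigma$ for each cone) and $F_1$ is exact.

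The main obstacle is the normalization step: one must verify that the $x_0^\chi$-rescaling on the coherent side and the orientation choice on the constructible side together force the naturality squares of Proposition \ref{prop:nat} to commute on the nose, not merely up to a cocycle of unit scalars. Equivalently, one must check that the $1$-cocycle of scalars $x_0^{\chi-\psi}$ realized by $F_1$ is the coboundary of the $0$-cochain $(\sigma,\chi) \mapsto x_0^{-\chi}$, which is immediate once written out but does require the careful indexing.
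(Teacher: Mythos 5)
Your proposal takes essentially the same route as the paper, whose entire proof consists of citing Propositions \ref{prop:nat} and \ref{prop:rgaacyc} and leaving the bookkeeping implicit. The normalization concern you raise is real and correctly resolved; it is perhaps cleanest to observe that $\bGamma(M_\bR,\Sigma)$ has a terminal object $(\{0\},M_\bR)$, so one may trivialize $F_1(\Theta'(\sigma,\chi))$ and $F_2(\Theta'(\sigma,\chi))$ via the unique structure maps to $\Theta'(\{0\},M_\bR)$, after which every naturality square commutes on the nose (and the lattice-representative ambiguity in writing the scalar as $x_0^{\chi-\psi}$, since $\chi$ and $\psi$ are cosets rather than lattice points, disappears).
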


\begin{proof}
It follows from Propositions \ref{prop:nat} and \ref{prop:rgaacyc}.
\end{proof}

In particular, if $\cE$ is a vector bundle rather than a complex of vector bundles, then $\RGa_c(\kappa(\cE))$ is concentrated in one degree.

\section{Morse and Klyachko filtrations}
\label{sec:four}

In this section we prove Theorems \ref{thm:1.1} and \ref{thm:1.1.5}.

\subsection{Klyachko's filtrations}
Let $G$ be an algebraic group and let $X$ be a variety equipped with a $G$-action.  Let $\pi:\cE \to X$ be a vector bundle on $X$.   A $G$-equivariant structure on $\cE$ is a $G$-action on the total space of $\cE$ that's compatible with $\pi$ and the action on $X$.

An important example is when $X = \bA^1$ and $G = \Gm$.    Vector bundles on $X$ are all trivial: a vector bundle $\cE$ can be recovered naturally from the fiber $\cE_1$ of $\cE$ at $1 \in \bA_1$.  An equivariant structure on $\cE$ induces an interesting structure on $\cE_1$: a filtration.  The $i$th filtered piece of $\cE_1$ is given by those fibers that extend to global sections of weight $i$ (under the $\Gm$-action on $\Gamma(\cE)$).  If $\Gm$ acts on $\bA^1$ via $(\lambda, z) \mapsto \lambda \cdot z$, then this is an decreasing filtration, and if it acts via $(\lambda,z)\mapsto \lambda^{-1} \cdot z$ it is an increasing filtration.  We prefer the latter convention.

Let us spell this out more precisely.

\begin{definition}
If $\cE$ is an equivariant vector bundle on $\bA^1$ (with $\Gm$ acting via $\lambda^{-1} \cdot z$) then we introduce the following notation:
\begin{itemize}
\item $\Gamma(\cE)$ denotes the global sections of $\cE$, together with its $\Gm$-action by
$$(\lambda \star s)(z) = \lambda \star (s(\lambda^{-1} \star z)) = \lambda \star (s(\lambda \cdot z))$$
\item $\Gamma(\cE)_i$ denotes the $i$th weight space, i.e.
$$\Gamma(\cE)_i = \{s \mid \lambda \star s = \lambda^i \cdot s\}$$
\end{itemize} 
Note that if $s \in \Gamma(\cE)_i$ then $z \cdot s \in \Gamma(\cE)_{i+1}$.  
\begin{itemize}
\item We let $F_{\leq i} (\cE_1)$ be the image of the (injective) restriction map $\Gamma(\cE)_i \to \cE_1$
\end{itemize}
\end{definition}

\begin{theorem}
The assignment $\cE \mapsto (\cE_1,F_{\leq})$ is an equivalence between the category of $\Gm$-equivariant vector bundles on $\bA^1$ and vector spaces equipped with increasing filtrations.  Under this correspondence, the $i$th piece of the associated graded vector space is naturally isomorphic to the $i$th weight space of the $\Gm$-action on the fiber $\cE_0$. 
\end{theorem}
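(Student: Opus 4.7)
The plan is to translate the statement into graded commutative algebra. A $\Gm$-equivariant quasi-coherent sheaf on $\bA^1=\Spec k[z]$ is the same as a graded module over $k[z]$; with the convention $\lambda\cdot z=\lambda^{-1}z$ on the base, the coordinate $z$ acquires weight $+1$ (this is exactly what the identity $z\cdot s\in\Gamma(\cE)_{i+1}$ in the excerpt records). Vector bundles of finite rank correspond to finitely generated graded $k[z]$-modules that are locally free, hence free (since $k[z]$ is a PID). So the whole theorem reduces to comparing, on the one hand, graded finitely generated free $k[z]$-modules and grading-preserving maps, and on the other hand, finite-dimensional filtered vector spaces (with exhaustive, bounded-below, eventually constant increasing filtrations) and filtered maps.

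\textbf{Forward direction.} First I would fix a decomposition $\Gamma(\cE)\cong\bigoplus_{j=1}^{r}k[z](-n_j)$ with generators $e_j$ of weight $n_j$ and compute the functor on this normal form. The fiber at $1$ is $\Gamma(\cE)/(z-1)\Gamma(\cE)$, with basis $\{\bar e_j\}$; the restriction of the weight-$i$ subspace $\Gamma(\cE)_i=\mathrm{span}\{z^{i-n_j}e_j:n_j\leq i\}$ has image $\mathrm{span}\{\bar e_j:n_j\leq i\}$, which is manifestly an increasing filtration that exhausts $\cE_1$ and vanishes for $i\ll 0$. The associated graded of this filtration is spanned by $\{\bar e_j:n_j=i\}$ in degree $i$, and the fiber at $0$ is $\Gamma(\cE)/z\Gamma(\cE)$, whose $i$th weight space is likewise spanned by those $\bar e_j$ with $n_j=i$. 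This gives the claimed natural isomorphism $\gr_i F_{\leq}(\cE_1)\cong\Gamma(\cE)_0\!\mid_{\text{weight }i}$.

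\textbf{Inverse functor.} To produce a quasi-inverse, I would use the Rees construction: given $(V,F_{\leq})$, set $R(V):=\bigoplus_{i\in\bZ}F_{\leq i}\subset V[z,z^{-1}]$, where the summand $F_{\leq i}$ sits in internal degree $i$ (embedded as $F_{\leq i}\cdot z^i$), and $z$ acts as the inclusion $F_{\leq i}\hookrightarrow F_{\leq i+1}$. Then $R(V)$ is a finitely generated graded submodule of the torsion-free module $V[z,z^{-1}]$ over the PID $k[z]$, hence graded free. Specializing at $z=1$ gives $R(V)/(z-1)R(V)\cong V$ with the original filtration recovered as the image of $R(V)_i$, and specializing at $z=0$ gives $R(V)/zR(V)\cong\gr V$ with the correct weight decomposition. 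One checks directly that $R$ is inverse, up to natural isomorphism, to $\cE\mapsto(\cE_1,F_{\leq})$ on standard-form modules; since every graded free module is of that form, this suffices.

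\textbf{Fully faithful and expected obstacle.} Finally, I would note that a $\Gm$-equivariant map $\cE\to\cE'$ is the same as a grading-preserving $k[z]$-linear map $\Gamma(\cE)\to\Gamma(\cE')$, which necessarily sends $\Gamma(\cE)_i$ into $\Gamma(\cE')_i$ and therefore induces a filtered map on the fibers at $1$; conversely any filtered map $\phi:V\to V'$ extends by $\phi[z,z^{-1}]$ and restricts to $R(\phi):R(V)\to R(V')$ because $\phi(F_{\leq i})\subset F'_{\leq i}$. The two constructions are manifestly mutually inverse. There is no real obstacle here; the only subtlety worth being explicit about is keeping the weight conventions straight (so that $z$ really has weight $+1$, which is what makes $F_{\leq}$ increasing rather than decreasing) and observing that the Rees module is automatically free, so one does not need to choose a splitting of the filtration to build the quasi-inverse.
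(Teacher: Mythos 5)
Your proof is correct. Note that the paper does not actually supply a proof of this theorem: it is quoted as a standard preliminary to Klyachko's classification, and the intended argument is precisely the Rees-module equivalence between increasingly filtered vector spaces and $\Gm$-equivariant sheaves on $\bA^1$ that you spell out. Your treatment is complete: the weight bookkeeping (so that $z$ lies in weight $+1$, matching the remark in the text that $z\cdot s\in\Gamma(\cE)_{i+1}$, and hence $F_{\leq}$ is increasing), the reduction to graded free modules over the PID $k[z]$, the natural identification $\gr_i F_{\leq}(\cE_1)\cong(\cE_0)_i$ (which one can also see without choosing a basis, via the injective map $\Gamma(\cE)_i\to\cE_1$ together with $\gr_i\cong\Gamma(\cE)_i/z\Gamma(\cE)_{i-1}=(\Gamma(\cE)/z\Gamma(\cE))_i$), and the Rees construction as quasi-inverse, using that a finitely generated torsion-free graded $k[z]$-module is graded free. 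One implicit hypothesis worth flagging: the equivalence as stated requires the filtration to be exhaustive and bounded below (equivalently, for finite-dimensional $V$, eventually $0$ and eventually all of $V$), which you note, and the base here is tacitly a field even though the paper elsewhere allows a noetherian base ring $\sfR$; with those conventions the argument is exactly right.
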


Klyachko \cite{K} used this theorem to classify torus-equivariant vector bundles on toric varieties.  Let $X$ be a toric variety with a $T$-action, and let $x_0 \in X$ be a base point in the open orbit.  If $\alpha \in N = \Hom(\Gm,T)$ generates a ray in the fan defining $X$, then there is an $\alpha$-equivariant map $i_\alpha: \bA^1 \to X$ that carries $1$ to $x_0$ and $0$ to a point in the $T$-stable divisor corresponding to $\alpha$.  If $\cE$ is an equivariant vector bundle on $X$ we can pull it back via this map to obtain a $\Gm$-equivariant vector bundle on $\bA^1$.  By the previous theorem we get a filtration on the vector space $E = \cE_{x_0}$, which we denote by $E^\alpha$.  More precisely we define
$$E^{\alpha}_{\leq k} = F_{\leq k} (i_\alpha^* \cE)$$
Klyachko observed that these filtrations satisfy the following compatibility condition

\vspace{.1in}

(C)\quad Let $\sigma \in \Sigma$, and let $\alpha_1,\ldots, \alpha_k$ be the generators of the 1-dimensional rays in $\sigma$.  Then the subspaces $E^{\alpha_j}_{\leq i}$ consist of coordinate subspaces of some basis of $E$.
\vspace{.1in}

\begin{theorem}[Klyachko]
The assignment $\cE \mapsto (E,\{E_\alpha\})$ is an equivalence between the category of equivariant vector bundles and the category of vector spaces equipped with filtrations satisfying condition (C).
\end{theorem}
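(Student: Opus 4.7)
The plan is to prove the theorem in three stages: functoriality, reduction to the affine case, and classification on each affine chart. Functoriality of $\cE \mapsto (E, \{E^\alpha\})$ is immediate once we fix the basepoint $x_0$ in the open orbit: a morphism of equivariant bundles restricts to a map of generic fibers, and since pullback along each $i_\alpha$ is $\Gm$-equivariant, the previous theorem guarantees that this map of fibers respects all the filtrations $E^\alpha$.

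For the reduction step, I would note that an equivariant bundle $\cE$ on $X$ is the same data as a compatible family of equivariant bundles $\cE\vert_{U_\sigma}$ on each affine open $U_\sigma$, glued along the intersections $U_\sigma \cap U_\tau = U_{\sigma \cap \tau}$. Because each $U_\sigma$ contains the open orbit (so the generic fiber is the same $E$ for every chart) and each filtration $E^\alpha$ depends only on the ray $\alpha$, not on the cone containing it, the compatibility on overlaps reduces to checking that condition (C) holds at each cone separately. Thus it suffices to prove the equivalence for $X = U_\sigma$ with $\sigma$ a fixed cone.

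For the affine case, first handle a smooth cone $\sigma = \bR_{\geq 0}\alpha_1 + \cdots + \bR_{\geq 0}\alpha_k$ with $\{\alpha_j\}$ part of a basis of $N$. Then $U_\sigma \cong \bA^k \times T'$ where $T'$ is a complementary subtorus. Equivariant bundles on $T'$ are classified by their fiber at the identity together with the $T'$-action, and applying the previous $\bA^1$-theorem to each coordinate axis gives a vector space $E$ with $k$ increasing filtrations $E^{\alpha_j}_{\leq \bullet}$. The key fact is that the $M$-grading on the corresponding module over $\sfR[\sigma^\vee \cap M]$ produces a simultaneous splitting $E = \bigoplus_\chi E_\chi$, from which each $E^{\alpha_j}_{\leq i}$ is recovered as a coordinate subspace—this is exactly condition (C). Conversely, condition (C) supplies such a splitting by pure linear algebra (simultaneous diagonalization of a commuting family of filtrations), and the splitting is the $M$-grading on the module of global sections. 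For a general cone $\sigma$, I would reduce to the smooth case by choosing a simplicial refinement: the filtrations are unchanged, and one verifies descent of the module structure by checking that the multigraded module produced by condition (C) is actually a module over $\sfR[\sigma^\vee \cap M]$ rather than just over the semigroup ring of the refinement.

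The main obstacle will be the non-simplicial case of Step 3. For simplicial cones, the $\bA^1$ theorem trivially bootstraps, but a general cone $\sigma$ whose semigroup $\sigma^\vee \cap M$ has many relations requires showing that condition (C)—which is phrased purely in terms of rays—forces the multiplication action of all semigroup generators (not just those dual to rays) to be well-defined on the decomposition. Concretely, one must verify that whenever $\chi \in \sigma^\vee \cap M$, the multiplication-by-$\chi$ map $E_\psi \to E_{\psi + \chi}$ produced from condition (C) satisfies the relations of the semigroup ring; this amounts to checking that the decomposition $E = \bigoplus_\chi E_\chi$ obtained from the rays is consistent with the \emph{face structure} of $\sigma^\vee$, which is where most of the genuine work of Klyachko's argument lies.
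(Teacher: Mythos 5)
The paper does not prove this theorem; it simply cites Klyachko~\cite{K} (and notes the change of indexing convention), so there is no argument in the text to compare against. Evaluating your sketch on its own terms, the functoriality and reduction-to-affine steps are fine, and the treatment of a smooth full-dimensional cone is essentially on target once one adds the graded Nakayama/Koszul argument showing that a $\bZ^{n}$-graded finitely generated projective module over a polynomial ring is a direct sum of shifts of the free rank-one module. That splitting statement \emph{is} the crux of Klyachko's argument (an equivariant bundle on an affine toric chart decomposes equivariantly into line bundles), and you invoke it rather than prove it, which is acceptable in a sketch but worth flagging as the place where the real content lives.

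The last step, however, is aimed in the wrong direction. Passing to a simplicial (or smooth) refinement of $\sigma$ \emph{adds} rays; the filtrations are indexed by rays, so the input data $(E,\{E^{\alpha}\})$ does not include filtrations for the new rays, and there is no canonical way to produce them without already having the bundle you are trying to construct. Conversely, a module over the semigroup ring of a refined cone does not descend to a module over $\sfR[\sigma^{\vee}\cap M]$ in any useful sense (the map goes $U_{\tilde\sigma}\to U_\sigma$, so pullback goes the other way, and pushforward of a locally free sheaf along a partial resolution need not be locally free). Also note that a simplicial cone need not be smooth, so even the first reduction leaves a gap. Klyachko's actual argument avoids all of this by working directly on each affine chart $U_\sigma$: condition~(C) on the filtrations indexed by the rays of $\sigma$ is precisely what produces a basis of $E$ diagonalizing all of them simultaneously, from which one reads off weight vectors and assembles $\Gamma(U_\sigma,\cE)$ as a direct sum of cyclic $M$-graded modules $\cO_{U_\sigma}(\chi_\ell)$; the compatibility on overlaps is then automatic. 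Your plan would be fixed by dropping the refinement idea entirely and proving the affine splitting lemma directly from condition~(C), including the integrality check that the weight vectors obtained from the jumps of the filtrations actually lie in $M$ modulo $\sigma^{\perp}$.
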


\begin{remark}
Klyachko uses a different convention and works with decreasing filtrations.  His $E^\alpha(i)$ is our $E^{\alpha}_{\leq -i}$.
\end{remark}

\subsection{Morse filtrations of $\Theta$-complexes}
\label{sec:morsetheta}

If $U$ is an open subset of $X$, and $F$ is a sheaf on $X$, then there is an inclusion $\Gamma_c(U;F\vert_U) \hookrightarrow \Gamma_c(X;F)$.   If $U$ is of the form $\{x \mid f(x) < t\}$ for some continuous function $f:X \to \bR$ and $t \in \bR$, let us put $\Gamma_{c,f < t}(F) = \Gamma_c(U;F\vert_U)$. This is a left-exact functor and we let $\RGa_{c,f < t}$ denote its right derived functor.  

\begin{proposition}
\label{prop:morsetheta}
Let $\sigma \subset N_\bR$ be a polyhedral cone and let $\chi \subset M_\bR$ be a coset of $\sigma^\perp$.  Suppose $f:M_\bR \to \bR$ is a convex continuous function, i.e. for each $t$ the set $\{f < t\} = \{x \in M_\bR \mid f(x) < t\}$ is convex.  Then if we pick an orientation of $M_\bR$ we have naturally
$$\bfR^i\Gamma_{c,f < t}(\Theta(\sigma,\chi)[\dim(M_\bR)])\cong \bigg\{
\begin{array}{cl}
\sfR & \text{ if $i = 0$ and $\{f < t\} \cap (\chi+ \sigma^\vee)^\circ \neq \varnothing$}\\
0 & \text{ otherwise}
\end{array} 
$$
\end{proposition}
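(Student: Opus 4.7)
The plan is to reduce the computation to ordinary compactly-supported cohomology of an open set, then invoke convexity.

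First I would unwind the definitions. Writing $n = \dim(M_\bR)$ and $U = (\chi + \sigma^\vee)^\circ$, the sheaf $\Theta(\sigma,\chi)$ is by definition $j_!\sfR_U$, where $j:U \hookrightarrow M_\bR$ is the (open) inclusion. Set $V = \{f < t\}$. The functor $\Gamma_{c,f<t}$ is just $\Gamma_c(V;-|_V)$, and restriction commutes with extension-by-zero along an open inclusion, so
\[
\Theta(\sigma,\chi)|_V \;=\; (j|_{U\cap V})_!\,\sfR_{U\cap V}.
\]
Applying the same identity $\bfR\Gamma_c(X;k_!F)\cong\bfR\Gamma_c(A;F)$ that was used in the proof of Proposition~\ref{prop:rgaacyc}, I get
\[
\bfR\Gamma_{c,f<t}(\Theta(\sigma,\chi)) \;\cong\; \bfR\Gamma_c(U\cap V;\sfR).
\]

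Next I would use convexity. The set $U$ is convex open in $M_\bR$ (as the interior of the translated cone $\chi + \sigma^\vee$), and $V$ is convex open by hypothesis on $f$. Their intersection $U\cap V$ is therefore convex and open. There are two cases: if $U\cap V = \varnothing$, the right-hand side is zero. Otherwise $U\cap V$ is a nonempty convex open subset of $M_\bR$, hence contractible and a topological manifold of dimension $n$ which inherits an orientation from the chosen orientation of $M_\bR$. For such a space, Poincaré--Lefschetz duality (or equivalently the standard computation $H^i_c(\bR^n;\sfR) = \sfR$ for $i=n$ and $0$ otherwise, transported along any homeomorphism from $\bR^n$ to an open convex set, or more cleanly via the contractibility of $U\cap V$) gives
\[
H^i_c(U\cap V;\sfR) \;=\; \begin{cases} \sfR & \text{if } i = n,\\ 0 & \text{otherwise.}\end{cases}
\]

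Finally, applying the shift $[n]$ turns this concentration in degree $n$ into concentration in degree $0$, which is exactly the claimed formula. The condition $\{f<t\}\cap(\chi+\sigma^\vee)^\circ\neq\varnothing$ in the statement is just $U\cap V\neq\varnothing$, separating the two cases above.

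I do not foresee a genuine obstacle: every step is a routine application of a standard identity (extension-by-zero and compactly-supported cohomology), an elementary convexity observation (intersections of convex sets are convex), and a classical duality/contractibility computation. The only small care needed is tracking the shift and the orientation, both of which enter only to identify the one-dimensional top cohomology with $\sfR$ canonically.
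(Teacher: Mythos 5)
Your argument is correct and coincides with the paper's proof: both reduce $\bfR\Gamma_{c,f<t}(\Theta(\sigma,\chi))$ to the compactly-supported cohomology of $(\chi+\sigma^\vee)^\circ \cap \{f<t\}$ via the extension-by-zero identity and then invoke convexity of both open sets. Your write-up merely spells out the intermediate steps (restriction commuting with $j_!$, the two cases) in slightly more detail than the paper does.
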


\begin{proof}
By definition, 
$$\begin{array}{rl}
\bfR^i\Gamma_{c,f < t}(\Theta(\sigma,\chi)[\dim(M_\bR)]) & = \bfR^{i + \dim(M_\bR)}\Gamma_c(\{f < t\};\Theta(\sigma,\chi)\vert_{\{f < t\}}) \\
& = H^{i + \dim(M_\bR)}_c((\chi + \sigma^\vee)^\circ \cap \{f < t\})
\end{array}
$$
The proposition follows from the fact that both $(\chi + \sigma^\vee)^\circ$ and $\{f < t\}$ are convex open subsets of $M_\bR$.
\end{proof}

As a consequence, when $F^\bullet$ is a $\Theta$-complex, we may compute $\RGa_{c,f < t}(F^\bullet)$ term by term.  The result is a subcomplex of $\RGa_c(F^\bullet)$, computed according to Proposition \ref{prop:rgaacyc} by applying $\Ga_c$ term by term.  Letting $t$ vary this gives us an open increasing filtration (see \ref{sec:filt}) of the complex $\RGa_c(F^\bullet)$.  We refer to this as the \emph{Morse filtration} of $F^\bullet$ associated to the function $f$.  The proposition says that the Morse filtration of $\RGa_c(\Theta(\sigma,\chi))$ is pure of weight $\inf_{x \in (\chi+ \sigma^\vee)^\circ} f(x)$.

\subsection{Proof of Theorems \ref{thm:1.1} and \ref{thm:1.1.5}}

We will only work with the Morse filtrations of Section \ref{sec:morsetheta} associated to linear functions, which we may identify with elements of $N_\bR$.

Let us record the following corollary of Proposition \ref{prop:morsetheta}

\begin{lemma}
\label{lem:morsetheta}
Let $\cE$ be an equivariant vector bundle on $X$, and suppose that $f \in N_\bR$ belongs to $\tau^\circ$.  Then the Morse filtrations of $\RGa_c(\kappa(\cE))$ and of $\RGa_c(\kappa(\cE\vert_{U_\tau}))$ with respect to $f$ agree.
\end{lemma}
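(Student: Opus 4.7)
I would prove the lemma by working explicitly with the \v{C}ech presentations of both sides as $\Theta$-complexes (Section \ref{sec:cech}) and appealing to Proposition \ref{prop:morsetheta} term-by-term.

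Setup. Both $\kappa(\cE)$ and $\kappa(\cE|_{U_\tau})$ are \v{C}ech $\Theta$-complexes with summands of the form $\Theta(\sigma,\chi)[\dim M_\bR]$; the summands indexed by $\sigma \preceq \tau$ appear in both with identical $\chi$-multiplicities (dictated by the Klyachko data of $\cE|_{U_\sigma}$), yielding a natural inclusion of subcomplexes $\iota : \kappa(\cE|_{U_\tau}) \hookrightarrow \kappa(\cE)$ whose cokernel $Q$ is supported on cones $\sigma \not\preceq \tau$. By Theorem \ref{thm:fiber} together with Proposition \ref{prop:nat}, $\iota$ induces the identity on $\RGa_c \cong E$, and hence $\RGa_c(Q) \simeq 0$; the lemma reduces to upgrading this to $\RGa_{c,f<t}(Q) \simeq 0$ for every $t$.

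Threshold computation. By Proposition \ref{prop:morsetheta} the summand $\Theta(\sigma,\chi)$ contributes to $\Gamma_{c,f<t}$ iff $(\chi+\sigma^\vee)^\circ \cap \{f<t\} \neq \varnothing$. Using $f \in \tau^\circ$, one checks that $f|_{\chi+\sigma^\vee}$ is bounded below iff $f \in \sigma$, iff $\tau \preceq \sigma$; in that case $f$ vanishes on $\sigma^\perp$ and the infimum equals $f(\chi)$. For $\tau \not\preceq \sigma$ the summand is always-on in the Morse filtration.

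Decomposition. I would then split $Q$ by the short exact sequence $0 \to Q_{\mathrm{inc}} \to Q \to Q_{\mathrm{str}} \to 0$, where $Q_{\mathrm{inc}}$ collects the summands with $\sigma$ and $\tau$ incomparable and $Q_{\mathrm{str}}$ the summands with $\tau \prec \sigma$; a direct check (if $\tau \preceq \sigma' \preceq \sigma$ then $\tau \preceq \sigma$) shows that $Q_{\mathrm{inc}}$ is a sub-$\Theta$-complex of $Q$. By the threshold computation, $Q_{\mathrm{inc}}$ is always-on so $\RGa_{c,f<t}(Q_{\mathrm{inc}}) = \RGa_c(Q_{\mathrm{inc}})$ is $t$-independent; and $Q_{\mathrm{str}}$ consists of \v{C}ech contributions from the open star of $\tau$, whose sub-complex at each fixed character $\chi$ is a \v{C}ech-type computation on the toric subvariety $X_\tau$.

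The main obstacle is verifying $\RGa_c(Q_{\mathrm{inc}}) \simeq 0$ and $\RGa_{c,f<t}(Q_{\mathrm{str}}) \simeq 0$ for every $t$, which together with the above short exact sequence and $\RGa_c(Q) \simeq 0$ close the argument. I expect the first to come out of the \v{C}ech combinatorics for the open-star-complement subfan at $\tau$, and the second to follow from the microlocal identification of Theorem \ref{thm:1.2} applied to $\cE|_{X_\tau}$ combined with a character-by-character matching of \v{C}ech differentials with the Morse-filtered structure.
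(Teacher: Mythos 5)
The proposal goes wrong at the very first step, the ``Setup.'' You model $\kappa(\cE|_{U_\tau})$ by the sub-$\Theta$-complex of $\kappa(\cE)$ indexed by the cones $\sigma \preceq \tau$ (the \v Cech complex built from the fan of $U_\tau$ alone). This object is indeed a subcomplex of $\kappa(\cE)$, but it is \emph{not} quasi-isomorphic to $\cE|_{U_\tau}$: the \v Cech resolution argument requires a complete fan, and the fan of $U_\tau$ is not complete. Concretely, take $X = \bP^1$, $\cE = \cO$, $\tau = \sigma_+$. Your subcomplex is $\Theta(\sigma_+,\chi)[1] \to \Theta(\{0\},\chi')[1]$, whose total $\RGa_c$ is acyclic, whereas $\RGa_c(\kappa(\cE|_{U_\tau}))$ must be a copy of $\sfR$ in degree zero by Theorem \ref{thm:fiber}. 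Consequently your assertion that $\iota$ induces the identity on $\RGa_c \cong E$ is false, and with it $\RGa_c(Q) \simeq 0$. In this example $Q$ consists of the single summand $\Theta(\sigma_-,\chi_-)[1]$ (so $Q = Q_{\mathrm{inc}}$ and $Q_{\mathrm{str}} = 0$), and one computes $\RGa_c(Q) \cong \sfR$, which also shows that the first of the two vanishing claims you defer---$\RGa_c(Q_{\mathrm{inc}}) \simeq 0$---is already false.

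The paper sidesteps all of this by choosing a different model for $\kappa(\cE|_{U_\tau})$: namely $\vC^\bullet(\cE)\vert_{U_\tau}$, the restriction of the \emph{full} \v Cech complex for $\Sigma$. Since $\Theta'(\sigma,\chi)\vert_{U_\tau} = \Theta'(\sigma\cap\tau,\chi + (\sigma\cap\tau)^\perp)$, this model is still indexed by the cones of $\Sigma$ (and is quasi-isomorphic to $\cE|_{U_\tau}$ because restriction to an open set is exact). The two complexes $\kappa(\cE)$ and $\kappa(\cE|_{U_\tau})$ are then aligned term-by-term over the \emph{same} index set, and the proof reduces to the observation---immediate from Proposition \ref{prop:morsetheta}---that when $f \in \tau^\circ$, the infimum of $f$ on $(\chi + \sigma^\vee)^\circ$ equals its infimum on $(\chi + (\sigma\cap\tau)^\vee)^\circ$. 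There is no short exact sequence to analyze and no cokernel to split. If you want to salvage your approach, you should replace your subcomplex model by the paper's restriction model; note that this reverses the direction of the natural comparison map (it is now $\kappa(\cE) \to \kappa(\cE|_{U_\tau})$, induced by the inclusions $\Theta(\sigma,\chi) \hookrightarrow \Theta(\sigma\cap\tau,\cdot)$), so the cone/cokernel you would need to analyze is a different object altogether, and the term-by-term weight comparison makes that detour unnecessary.
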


\begin{proof}
We have by definition $\kappa(\cE) = \kappa(\vC^\bullet(\cE))$.  Since $\Theta'(\sigma,\chi) \vert_{U_\tau} = \Theta'(\sigma \cap \tau,\chi + (\sigma \cap \tau)^\perp)$, we may compute $\kappa(\cE\vert_{U_\tau})$ by replacing each $\Theta(\sigma,\chi)$ in the $\Theta$-complex $\kappa(\vC^\bullet)$ by $\Theta(\sigma \cap \tau,\chi+ (\sigma \cap \tau)^\perp)$.  To show that the filtrations of $\RGa_c(\kappa(\cE))$ and $\RGa_c(\kappa(\cE\vert_{U_\tau}))$ agree it therefore suffices to show that the filtrations of $\RGa_c(\Theta(\sigma,\chi))$ and $\RGa_c(\Theta(\sigma \cap \tau,\chi + (\sigma \cap \tau)^\perp)$ agree.  This follows from Proposition \ref{prop:morsetheta}, and from the fact if $f \in \tau^\circ$ then the minimum value $f$ takes on $(\sigma \cap \tau)^\vee$ is the same as the minimum value it takes on $\sigma^\vee$.  
\end{proof}

If $C^\bullet$ has cohomology in only one degree then let us call a filtration of $C^\bullet$ \emph{strict} if each filtered piece has cohomology concentrated in the same degree.  Thus Theorem \ref{thm:1.1} says that the Morse filtration $\RGa_{c,f<t})$ of $\RGa_c(\kappa(\cE))$ is strict for every vector bundle $\cE$ and $f \in N_\bR$.

\begin{proof}[Proof of Theorem \ref{thm:1.1}]
The fact that $\RGa_c(F^\bullet)$ is concentrated in degree zero is proved in Theorem \ref{thm:fiber}.  To prove that the filtration is strict we have to show that $\bfR^i\Ga_{c,f< t}(F^\bullet)$ vanishes for $i > 0$.  By Lemma \ref{lem:morsetheta}, we may assume that $X = \Spec \sfR[\tau^\vee]$ and that $f$ belongs to $\tau^\circ$.  Since $X$ is affine, $\cE$ splits as a sum of equivariant line bundles $\cE = \cO_X(\chi_1) \oplus \cdots \oplus \cO_X(\chi_r)$ in which case $\kappa(\cE) = \Theta(\tau,\chi_1) \oplus \cdots \Theta(\tau,\chi_r)$.  The fact that $\bfR^i\Ga_{c,f < t}(F^\bullet)$ vanishes for $i > 0$ now follows from Proposition \ref{prop:morsetheta}.
\end{proof}

\begin{proof}[Proof of Theorem \ref{thm:1.1.5}]
We have seen in Theorem \ref{thm:fiber} that there is a natural isomorphism $H^0(M_\bR;\kappa(\cE)) \cong E$.  Let us abuse notation and let $\alpha$ denote both the generator of a ray in $\Sigma$ and the ray itself $\alpha = \bR_{>0} \cdot \alpha$.  The Morse filtration of $\RGa_c(\kappa(\cE))$ coincides with the Morse filtration of $\RGa_c(\kappa(\cE\vert_{U_{\alpha}}))$.  By definition, the Klyachko filtration of the generic fiber of $\cE$ coincides with the Klyachko filtration of the generic fiber of $\cE\vert_{U_{\alpha}}$.  We are therefore reduced to proving that the filtrations coincide in case $X = U_{\alpha}$.  As in the previous proof we use the fact that vector bundles on an affine $X$ split to reduce further to case when $\kappa(\cE) = \Theta(\bR \cdot \alpha,\chi)[\dim(M_\bR)]$.  In that case both the Morse filtration of $\RGa_c(\kappa(\cE))$ and the Klyachko filtration of the generic fiber of $\cO(c)$ are pure of weight $\langle \alpha,\chi\rangle$.  This completes the proof.
\end{proof}

\section{More Morse theory: microlocal sheaf theory}
\label{sec:five}

The Morse filtration defined in Section \ref{sec:morsetheta} is part of  a more general story of microlocal sheaf theory developed by Kashiwara and Schapira.  We will indicate how parts of this theory work for a special class of sheaves (polyhedral sheaves on real vector spaces) and what the consequences are for the CCC.  Most of the operations described in this section are sheaf-theoretic counterparts of operations on functions described in Section \ref{sec:ecm}.

\subsection{Polyhedral sheaves}
\label{sec:polyh}

Let $V$ be a real vector space, and recall from Section \ref{sec:ecm} the definition of a polyhedral stratification of $V$.  A \emph{polyhedral sheaf} on $V$ is a sheaf that is constant along the cells in a polyhedral stratification.  If $F$ is constant along the cells in $S$ then we will say that $F$ is $S$-polyhedral.  A theorem of Allen Shepard states that the inclusion functor from the derived category of the abelian category of $S$-polyhedral sheaves into the constructible derived category $D^b_c(V)$ is full \cite{Shep}.  We call objects of the essential image of this embedding $S$-polyhedral complexes---they are exactly those bounded complexes of sheaves whose cohomology sheaves are $S$-polyhedral.

To give an $S$-polyhedral sheaf is equivalent to giving the following data.
\begin{enumerate}
\item A vector space $F_a$ for each cell $a$ of $S$
\item A ``restriction map'' $r_{ab}:F_a \to F_b$ for every pair of cells with $a \subset \overline{b}$
\end{enumerate}
The restriction maps are moreover required to make all triangles commute: if $a \subset \overline{b}$ and $b \subset \overline{c}$, then $r_{bc} \circ r_{ab} = r_{ac}$.  Let us call the collection $(F_a,r_{ab})$ an $S$-combinatorial sheaf.  To extract an $S$-combinatorial sheaf from an $S$-polyhedral sheaf, we define the \emph{star} of $a \in S$ to be the union of cells belonging to $S$ and containing $a$ in their closure.  We set $F_a := \Gamma(\text{star}(a);F)$ and take $r_{ab}$ to be the restriction map associated to the inclusion $\text{star}(b) \subset \text{star}(a)$.  This operation is an equivalence of categories.  Shepard's theorem implies that to give an object of $D^b_S(V)$ is equivalent to giving a complex of $S$-combinatorial sheaves.

\begin{remark}
If $F$ is $S$-constructible, and $a\in S$ then for each point $x \in a$, the restrict-to-the-stalk map $\Gamma(\text{star}(a);F) \to F_x$ is an isomorphism.  Thus the values $F_a$ of an $S$-combinatorial sheaf are stalks of the corresponding $S$-constructible sheaf.
\end{remark}

\subsection{Morse groups and microlocal stalks}
\label{sec:mgams}

Let $V$ be a manifold and $f:V \to \bR$ a smooth function.  For each sheaf $F$ on $V$ define $\Gamma_{f \leq t}(F)$ to be the subgroup of $\Gamma(F)$ consisting of sections supported in the closed set $\{f \leq t\} := \{v \in V \mid f(v) \leq t\}$.  If $F^\bullet$ is a complex of flasque sheaves on $V$ then we can apply $\Gamma_{f \leq t}$ term-by-term to obtain a subcomplex $\RGa_{f \leq t}(F^\bullet) \subset \RGa(F^\bullet)$.  In this way we get a closed $\bR$-indexed filtration of $\RGa(F^\bullet)$.  

\begin{remark}
If $V = M_\bR$ and $F^\bullet$ is a $\Theta$-complex with compact support, then we have naturally $\RGa(F^\bullet) \cong \RGa_c(F^\bullet)$.  By definition, the Morse filtration $\RGa_{c,f<t}(F^\bullet)$ considered in Section \ref{sec:morsetheta} of this complex is the open counterpart of the closed Morse filtration $\RGa_{f\leq t}(F^\bullet)$ just defined.
\end{remark}

We can form a local version of this Morse filtration around a point $x \in V$: we can restrict $F^\bullet$ and $f$ to an $\epsilon$-ball $U$ around  $x$ and form $\RGa_{f \leq t}(U;F^\bullet) \subset \RGa(U;F^\bullet)$.  This filtration is not indexed by all of $\bR$ but by an interval around $f(x)$ whose length shrinks with $\epsilon$.  If $F^\bullet$ is cohomologically constructible, then the germ of this filtration is independent of $U$.  In particular there is a well-defined ``$f(x)$-graded piece'' of this filtered complex, yielding an object $\Mo_{x,-f}(F^\bullet) \in D^b(\Rmod)$ which we call the \emph{Morse group}.

\begin{remark}
An excision argument shows that the cohomology groups of the complex $\Mo_{x,-f}(F^\bullet)$ are naturally isomorphic to the relative groups $H^i(U,\{f > f(x)+\delta\};F^\bullet)$ for $U$ and $\delta$ suitably small.  (The Morse groups are typically defined as relative groups of the form $H^i(U,\{g < g(x) - \delta\};-)$; we have included a minus sign in the subscript to make our definition consistent with this.)  The figure below illustrates this latter definition
\begin{center}
\includegraphics[scale=.33,angle=45]{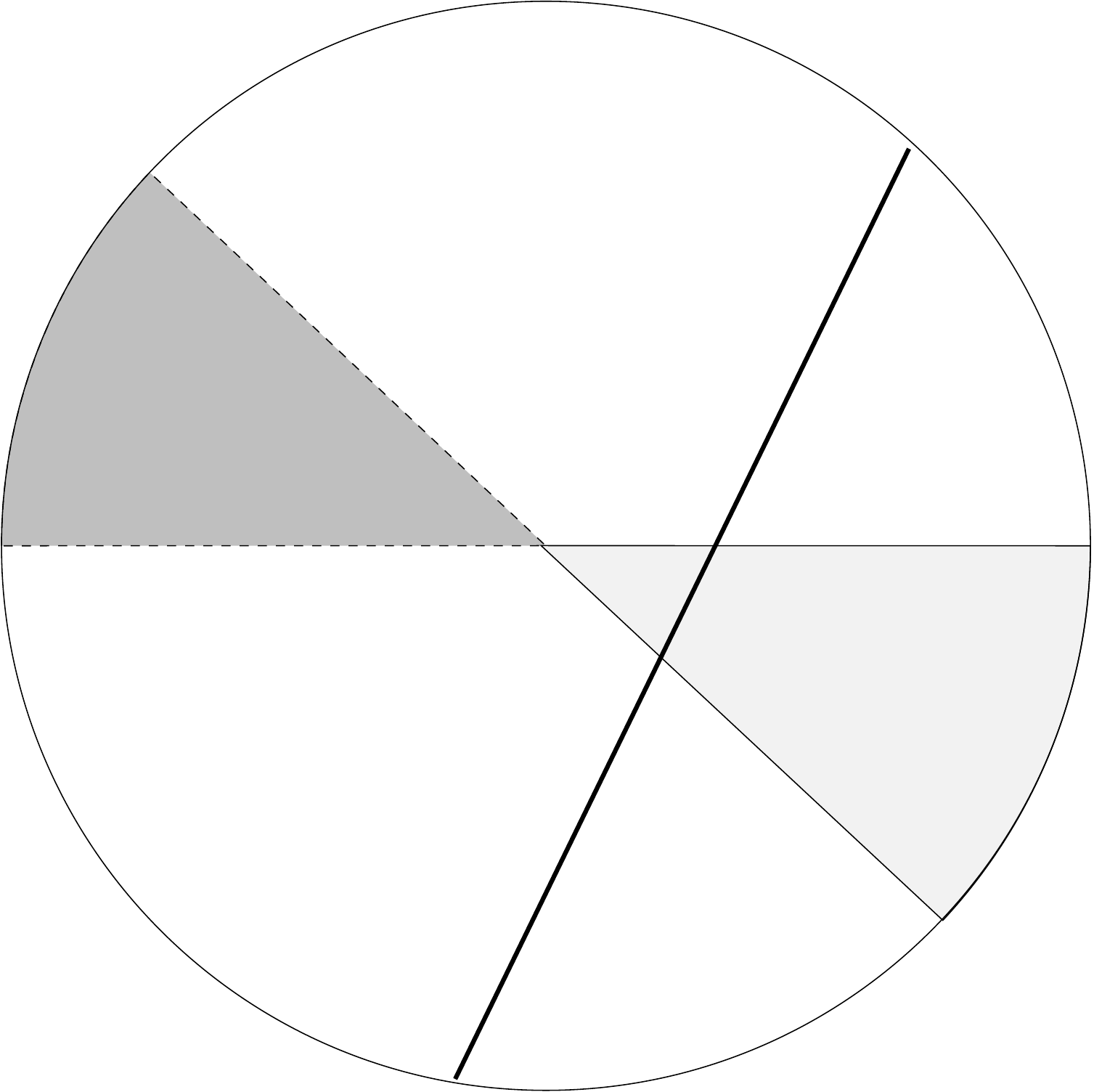}
\end{center}
The area within the circle is the neighborhood $U$, the central point is $x$, the shaded region is the sheaf $F^\bullet$ restricted to $U$, and the remaining line is the level set of $f$ at $f(x) + \delta$.
\end{remark}

Now suppose $V$ is a real vector space and $F^\bullet$ is a polyhedral sheaf on $V$, and let $(x,\xi) \in V \times V^* = T^* V$ be a cotangent vector.  We may regard $\xi$ as a linear function on $V$ and define the \emph{microlocal stalk} $\mu_{x,\xi}(F^\bullet)$ by the formula
$$\mu_{x,\xi}(F^\bullet) = \Mo_{x,\xi}(F^\bullet)$$
where $\xi$ is regarded as a cotangent vector in the left-hand subscript and a smooth (linear) function in the right-hand subscript.

\begin{remark}
Let us explain why we have given two names ($\mu$ and $\Mo$) to the same concept.  The Morse groups have the following property which is remarkable in general but trivial in the polyhedral setting: if $f$ is ``generic with respect to $F$'' in some sense (a condition that's irrelevant for polyhedral sheaves), then $\Mo_{x,-f}$ depends only on the first derivatives of $f$ at $x$, up to a shift depending on the second derivatives of $f$ at $x$.  (For instance, if $F^\bullet$ is the constant sheaf, $f$ is any Morse function, and $df$ vanishes at $x$, then the Morse group will be of rank one placed in cohomological degree equal to the index of the Hessian of $f$ at $x$).  In general $\mu_{x,\xi}$ is defined by first forming a sheaf $\nu_x F^\bullet$ called the ``specialization to the tangent space'' of $F^\bullet$, and then taking the Morse group of $\nu_x F^\bullet$ with respect to the function $\xi$.
\end{remark}

\subsection{Singular support and the CCC}

\begin{definition}
Let $V$ be a real vector space and let $F$ be a polyedral sheaf on $V$.  The \emph{singular support} of $F$ is the subset $\SS(F) \subset T^*M$ obtained by taking the closure of all pairs $(x,\xi)$ such that $\mu_{x,\xi} F \neq 0$.
\end{definition}

Singular support can be used to describe constructibility conditions.  For instance, a sheaf $F$ is constructible with respect to a polyhedral stratification $S$ if and only if $\SS(F)$ belongs to the \emph{conormal variety} $\Lambda_S$ of $S$:
$$\Lambda_S:= \bigcup_{s \in S} T^*_s V$$
We have a similar characterization of constructible sheaves which appear in the image of $\kappa$.  Recall we have defined in equation \ref{eq:2} a conical Lagrangian subset of $T^* M_\bR = M_\bR \times N_\bR$ by
$$
\LS = \bigcup_{\tau\in\Sigma} (\tau^\perp + M)\times -\tau$$

\begin{theorem}[\cite{fltz}]
\label{thm:fltz}
Let $\Sigma \subset N_\bR$ be a complete fan and $X$ the corresponding toric variety.  Then $\kappa$ restricts to an equivalence
$$\Perf_T(X) \cong D^b_{cc}(M_\bR;\LS)$$
In particular, if $F^\bullet$ is a sheaf on $M_\bR$ with compact support, and $\SS(F) \subset \LS$, then $F$ is quasi-isomorphic to a $\Theta$-complex, and the corresponding $\Theta'$-complex is perfect.
 \end{theorem}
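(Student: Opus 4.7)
The plan is to bootstrap from Theorem 3.4 (the two full embeddings $h\ltrp \hookrightarrow D^b(\cQ_T(X))$ and $h\ltr \hookrightarrow D^b(\Sh(M_\bR))$ together with the equivalence $h\ltrp \simeq h\ltr$ induced by $\kappa$) by identifying the essential images on both sides with the categories named in the theorem. Concretely, the claim factors as
\[
\Perf_T(X) \;\simeq\; h\ltrp \;\xrightarrow{\kappa}\; h\ltr \;\simeq\; D^b_{cc}(M_\bR;\LS),
\]
so two identifications remain to be justified.

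For the coherent identification, I would use the \v{C}ech construction $\vC^\bullet(\cE)$ from Section \ref{sec:cech}, which produces a $\Theta'$-resolution of any equivariant vector bundle: each term $j_{\sigma*}j_\sigma^*\cE$ splits (affine-locally on the invariant open $U_\sigma$) as a direct sum of objects $\Theta'(\sigma,\chi)$. Totalizing $\vC^\bullet$ over a bounded complex of vector bundles yields an object of $\ltrp$ quasi-isomorphic to it; since $\Perf_T(X)$ consists (on smooth or projective $X$) of such complexes, essential surjectivity onto $h\ltrp$ follows. Fullness is already supplied by Theorem 3.4, whose hom calculation comes from Proposition 3.2.

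For the constructible identification, one direction is a direct microlocal computation: each $\Theta(\sigma,\chi)$ is the extension by zero of the constant sheaf on the convex open set $(\chi+\sigma^\vee)^\circ$, whose singular support is supported on the inward conormals to faces. A face of $\chi+\sigma^\vee$ cut out by a sub-cone $\tau\subseteq\sigma$ lies in the affine subspace $\chi+\tau^\perp\subset M+\tau^\perp$, and its inward conormal is $-\tau$; hence $\SS(\Theta(\sigma,\chi))\subset (\tau^\perp+M)\times(-\tau)\subset\LS$. Compactness of support for $\kappa(\cE)$ follows because the signed \v{C}ech assembly cancels the unbounded tails of the individual $\Theta$'s (as is visible in Example \ref{ex:taut}). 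This places the image of $h\ltr$ inside $D^b_{cc}(M_\bR;\LS)$.

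The hard part, and the main obstacle, is essential surjectivity in the other direction: every $F^\bullet\in D^b_{cc}(M_\bR;\LS)$ must be shown to be quasi-isomorphic to a $\Theta$-complex. My approach is a d\'evissage along the periodic polyhedral stratification $\cH_\Sigma$ implied by the containment $\SS(F^\bullet)\subset\LS\subset\Lambda_{\cH_\Sigma}$. Using Shepard's theorem from Section \ref{sec:polyh}, represent $F^\bullet$ as a complex of $\cH_\Sigma$-combinatorial sheaves $(F_a,r_{ab})$. The condition $\SS(F^\bullet)\subset\LS$ forces the microlocal stalks $\mu_{x,\xi}F^\bullet$ to vanish unless $-\xi$ lies inside a cone of $\Sigma$ with $x$ in the corresponding face of $\cH_\Sigma$; combinatorially, this constrains the restriction maps $r_{ab}$ so that the only ``new'' cohomology appearing when passing from a cell to a smaller cell in its closure is of the shape contributed by a $\Theta(\sigma,\chi)$ summand. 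Filtering by dimension of support and peeling off such contributions cell-by-cell (compact support guarantees the induction terminates), one assembles an explicit $\Theta$-complex mapping quasi-isomorphically to $F^\bullet$. Combining this with Steps 1--3 and Theorem 3.4 delivers the equivalence, and the final sentence of the theorem is then just the restatement that every object of $D^b_{cc}(M_\bR;\LS)$ is represented by a $\Theta$-complex whose partner $\Theta'$-complex is perfect.
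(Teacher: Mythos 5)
This theorem is not proved in the present paper at all: it is stated with the citation \cite{fltz} and invoked as a black box, so there is no in-paper argument for you to be compared against.  That said, your outline does broadly track the strategy of the cited work (the \v{C}ech resolution $\vC^\bullet(\cE)$ giving the coherent identification, and a singular-support computation for the $\Theta(\sigma,\chi)$'s giving the forward inclusion on the constructible side), so the first three steps are in the right spirit.

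The genuine gap is in the step you flag as ``the hard part.''  Saying that $\SS(F^\bullet)\subset\LS$ ``constrains the restriction maps $r_{ab}$ so that the only new cohomology\ldots is of the shape contributed by a $\Theta(\sigma,\chi)$ summand,'' and that one can therefore ``peel off such contributions cell-by-cell,'' is a statement of intent rather than an argument.  Two concrete difficulties: (i) the cells of $\cH_\Sigma$ are not in bijection with the $(\sigma,\chi)\in\bGamma(M_\bR,\Sigma)$, so you cannot simply match combinatorial generators to $\Theta$'s --- the objects $\Theta(\sigma,\chi)$ are supported on half-open polytopes $(\chi+\sigma^\vee)^\circ$ that span many cells of $\cH_\Sigma$, and a single cell can carry contributions from several $(\sigma,\chi)$; (ii) the singular-support bound does not directly control the shape of a combinatorial presentation; translating $\SS(F^\bullet)\subset\LS$ into a statement about the maps $r_{ab}$ requires a nontrivial local-to-global argument.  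This is precisely the technical heart of the proof in \cite{fltz}, and it is not reproduced by the sketch.  Two lesser points: the claim that ``compactness of support for $\kappa(\cE)$ follows because the signed \v{C}ech assembly cancels the unbounded tails'' needs justification (each term of $\vC^\bullet(\cE)$ has noncompact support, and finiteness of the cohomology is exactly what must be proven, e.g.\ via properness of $X$), and the identification $\Perf_T(X)\simeq h\ltrp$ requires some care for $X$ singular and non-projective, as the paper notes in Section \ref{sec:filt}.
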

 
The theorem is an analog of Morelli's theorem \ref{thm:morelli2}, or the rephrasing (4) given in Corollary \ref{cor:morelli3}.  

For fixed $x \in M_\bR$, the microlocal stalks $\mu_{x,\xi}$ assemble to a sheaf on $T_x^* M_\bR$.  More precisely, there is a complex $\mu_{x,\xi}F^\bullet$ of constructible, conical sheaves on $T_x^* M_\bR$ whose stalk at $\xi$ is naturally idenitified with $\mu_{x,\xi}F^\bullet$.  A basic consequence of Theorem \ref{thm:fltz} is that $\mu_x \kappa(\cE^\bullet)$ is constructible with respect to the fan $-\Sigma$.  In particular, we may describe this sheaf ``combinatorially'' as in Section \ref{sec:polyh}.  Let us write $\mu_{x,-\sigma}(F^\bullet)$ for the stalk of $\mu_x$ at any point of the interior of $-\sigma$.

\begin{remark}
We do not define the Fourier-Sato transform for sheaves here, but its existence implies that if $F^\bullet$ is polyhedral, then the sheaf $\mu_x F^\bullet \in D^b_c(N_\bR)$ determines $F^\bullet$ in a neighborhood of $x$.
\end{remark}

The next proposition computes these microlocal stalks for $\Theta$-sheaves.  As a consequence we describe what the coherent counterpart of the microlocal stalks are.

\begin{proposition}
\label{prop:mutheta}
Let $x \in M_\bR$, $\sigma,\tau \in \Sigma$, and $\chi \in M_\bR/\tau^\perp$.  After choosing an orientation of $M_\bR$, we have canonically
$$\mu_{x,-\sigma}(\Theta(\tau,\chi)[\dim(M_\bR)]) = \Bigg\{
\begin{array}{ll}
\sfR & \text{if $\sigma \subset \tau$ and $x$ is in the closed face of $\chi + \tau^\vee$}\\
& \text{corresponding to $\sigma$}\\
0 & \text{otherwise}
\end{array}
$$
\end{proposition}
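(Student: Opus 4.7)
The plan is to reduce the computation to a local model at $x$---a single open convex polyhedral cone---and then evaluate the microlocal stalks of that model directly.

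First, if $x \notin \overline{\chi + \tau^\vee}$, then $\Theta(\tau,\chi)$ vanishes in a neighborhood of $x$, so all $\mu_{x,\xi}$ vanish; this matches the statement, since the closed face condition forces $x$ to lie in $\chi + \tau^\vee$. Otherwise, $x$ lies in the relative interior of a unique face of $\chi + \tau^\vee$, and this face has the form $\chi + (\sigma_x^\perp \cap \tau^\vee)$ for a unique face $\sigma_x$ of $\tau$. Using the standard identity $\sigma_x^\vee = \tau^\vee + \sigma_x^\perp$ (valid for any face $\sigma_x$ of a pointed cone $\tau$), the tangent cone to $\overline{\chi+\tau^\vee}$ at $x$ is a translate of $\sigma_x^\vee$. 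Since microlocal stalks depend only on the germ of the sheaf at $x$, this reduces the problem to computing $\mu_{0,\xi}\bigl(j_!\sfR_{(\sigma_x^\vee)^\circ}[n]\bigr)$ at the origin.

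The core of the proof is then a direct computation: for any full-dimensional pointed convex polyhedral cone $C \subset M_\bR$ with apex at the origin,
$$\mu_{0,\xi}\bigl(j_!\sfR_{C^\circ}[n]\bigr) \cong \begin{cases} \sfR & \text{if } \xi \in -C^\vee, \\ 0 & \text{otherwise,} \end{cases}$$
placed in cohomological degree zero. I would verify this in either of two ways: by translating the functional Fourier--Sato identity $\FT(j_C) = i_{-C^\vee}$ from Section \ref{sec:ecm} into a sheaf statement (noting that $j_!\sfR_{C^\circ}[n]$ is the sheaf-level lift of the costandard indicator function $j_C$, up to the orientation-induced sign), or more directly by relative cohomology. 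For the latter, the short exact sequence $0 \to j_!\sfR_{C^\circ} \to \sfR_V \to i_*\sfR_{V \setminus C^\circ} \to 0$ and the long exact sequence of the pair $(V, V \cap \{\xi > \delta\})$ reduce everything to the topology of $V \setminus C^\circ$ and its intersection with a half-space. A Mayer--Vietoris argument shows that when $\xi \in -C^\vee$ the half-space is disjoint from $C^\circ$ and an extra generator appears (giving $\sfR$ in degree $n$ before the shift), whereas when $\xi \notin -C^\vee$ the half-space cuts transversally into $C^\circ$ and the restriction map is an isomorphism, so the relative cohomology vanishes.

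Finally, to match this back to the proposition: for $\xi$ in the relative interior of $-\sigma$, the condition $\xi \in -\sigma_x$ is equivalent to $-\sigma \subseteq -\sigma_x$, i.e.\ $\sigma \subseteq \sigma_x$, which is precisely the condition that $\sigma$ is a face of $\tau$ (so $\sigma \subseteq \tau$) and $x$ lies in the closed face $\chi + (\sigma^\perp \cap \tau^\vee)$ of $\chi + \tau^\vee$. The main obstacle is the middle cone computation: one must reconcile the $[\dim M_\bR]$ shift with the various sign conventions (standard versus costandard, open versus closed Morse filtration, and compactly-supported versus ordinary cohomology, which differ here because $\Theta(\tau,\chi)$ has unbounded support along $\tau^\vee$) so that the nonzero value lands in degree zero as claimed.
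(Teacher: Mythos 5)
Your reduction matches the paper's almost exactly: your face $\sigma_x$ is what the paper calls $\upsilon$ (the largest face of $\tau$ perpendicular to $x$, after normalizing $\chi = 0$), and your observation that the germ of $\Theta(\tau,\chi)$ at $x$ is the germ of the sheaf on $(\sigma_x^\vee)^\circ$ is the paper's observation that the canonical map $\Theta(\tau,0) \hookrightarrow \Theta(\upsilon,0)$ is an isomorphism near $x$. So the skeleton of the argument is the same.

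Two small comments on the middle step. First, you describe the local model cone $C = \sigma_x^\vee$ as ``pointed,'' but it is not: $\sigma_x^\vee = \tau^\vee + \sigma_x^\perp$ contains the linear subspace $\sigma_x^\perp$, which is nonzero unless $\sigma_x$ is top-dimensional. This is harmless --- the only thing used is that $C^\circ$ is a convex open set of full dimension and that $C^\vee = \sigma_x$ --- but it should be corrected. Second, you flag the final degree/sign bookkeeping as ``the main obstacle'' without carrying it out, and propose either a Mayer--Vietoris computation or a transfer of the functional Fourier--Sato identity. The paper closes this gap more directly: by excision, $\mu_{x,-\sigma}(\Theta(\upsilon,0))$ has cohomology $H^i_c\bigl((\upsilon^\vee)^\circ \cap \{\xi < 1\}\bigr)$, and $(\upsilon^\vee)^\circ \cap \{\xi < 1\}$ is a convex open subset of dimension $n$ when $\xi \in \upsilon$ (so its compactly supported cohomology is $\sfR$ concentrated in degree $n$, canceling the $[\dim M_\bR]$ shift) and is empty otherwise. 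That one-line convexity observation is exactly what you need to close the step you left open, and it is shorter than Mayer--Vietoris.
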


\begin{proof}
If $x$ is not in the support of $\Theta(\tau,\chi)$ then there is nothing to prove, and so suppose $x \in \chi + \tau^\vee$.  Without loss of generality we may assume $\chi = 0$, $x \in \tau^\vee$.  Let $\upsilon$ be the largest face of $\tau$ that is perpendicular to $x$.  When $\upsilon = 0$, then $x$ is in the interior of $\tau^\vee$ and the proposition is easy to verify.  Suppose that $\upsilon \neq 0$.

The canonical map $\Theta(\tau,0) \hookrightarrow \Theta(\upsilon,0)$ is an isomorphism in a neighborhood of $x$, so we have naturally
$\mu_{x,-\sigma}(\Theta(\tau,0)) \cong \mu_{x,-\sigma}(\Theta(\upsilon,0))$.  The cohomology of the latter 
complex is naturally identified with the relative compactly supported cohomology groups $H^i_c((\upsilon^\vee)^\circ,\{\xi > 1\} \cap (\upsilon^\vee))$, which by excision is isomorphic to $H^i_c((\upsilon^\vee)^\circ \cap \{\xi < 1\})$.  But $(\upsilon^\vee)^\circ \cap \{\xi < 1\}$ is an oriented convex open set when $\xi \in \upsilon$, and empty otherwise.  This completes the proof.
\end{proof}

\begin{theorem}
\label{thm:mu}
Suppose that $F^\bullet$ is a  constructible sheaf on $M_\bR$  with compact support and with $\SS(F) \subset \LS$.  Let $\cE^\bullet$ be the corresponding $T$-equivariant perfect complex on $X$.  Let $\sigma \in \Sigma$ be a cone and let $X_\sigma \subset X$ the corresponding toric subvariety.  If $x \in M_\bZ \subset M_\bR$ is an integral point, and we choose an orientation of $M_\bR$ then we have a natural quasi-isomorphism
$$\mu_{x,-\sigma} (F^\bullet) \cong \RGa(X_\sigma,\cE^\bullet\vert_{X_\sigma})_x$$
where $(-)_x$ denotes the $x$th weight space.
\end{theorem}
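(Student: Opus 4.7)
The plan is to reduce the statement to a comparison on the generating objects $\Theta'(\tau,\chi)$ using Proposition \ref{prop:nat}. First I would observe that both sides of the asserted quasi-isomorphism are exact functors of $F^\bullet$ (equivalently, of $\cE^\bullet$): the microlocal stalk $\mu_{x,-\sigma}$ is an exact functor on $D^b_c(M_\bR)$, and on the coherent side, restriction to $X_\sigma$ is exact on perfect complexes, which combined with $\RGa$ and extraction of the $x$th weight space gives an exact functor. By Proposition \ref{prop:nat}, it therefore suffices to construct, naturally in $(\tau,\chi) \in \bGamma(\Sigma,M)$, an isomorphism
\[
\mu_{x,-\sigma}\bigl(\Theta(\tau,\chi)[\dim M_\bR]\bigr) \;\cong\; \RGa\bigl(X_\sigma,\Theta'(\tau,\chi)\vert_{X_\sigma}\bigr)_x,
\]
and to verify that the square in Proposition \ref{prop:nat} commutes for every relation $(\tau_1,\chi_1) \leq (\tau_2,\chi_2)$.

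Second, I would compute both sides on a generator $(\tau,\chi)$. The constructible side is handed to us by Proposition \ref{prop:mutheta}: the value is $\sfR$, concentrated in degree zero, precisely when $\sigma \subset \tau$ and $x \in \chi + (\tau^\vee \cap \sigma^\perp)$, and is zero otherwise. The coherent side is the weight-$x$ part of $\RGa(X_\sigma,(j_{\tau *}\cO_{U_\tau}(\chi))\vert_{X_\sigma})$. If $\sigma \not\subset \tau$ then $X_\sigma \cap U_\tau = \varnothing$ and this is zero; if $\sigma \subset \tau$, then open/closed base change identifies the restriction with $j'_* \cO_{U_\tau \cap X_\sigma}(\chi)$, and since $U_\tau \cap X_\sigma$ is affine with coordinate ring $\sfR[\tau^\vee \cap \sigma^\perp \cap M]$, its $\RGa$ is concentrated in degree zero and equals $\sfR[\tau^\vee \cap \sigma^\perp \cap M]\cdot e_\chi$. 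The $x$-weight space is $\sfR$ exactly when $x - \chi \in \tau^\vee \cap \sigma^\perp$, in perfect agreement with the constructible side.

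Third, naturality in $\bGamma(\Sigma,M)$ must be checked. A relation $(\tau_1,\chi_1) \leq (\tau_2,\chi_2)$ gives canonical maps $\Theta(\tau_1,\chi_1) \to \Theta(\tau_2,\chi_2)$ and $\Theta'(\tau_1,\chi_1) \to \Theta'(\tau_2,\chi_2)$, and on both sides the induced map on the rank-one pieces computed above is the identity on $\sfR$ whenever both source and target are nonzero and zero otherwise; this is straightforward to verify by tracing the two canonical inclusions through the formulas. For part (2) of the theorem, the restriction $\mu_{x,-\sigma}F^\bullet \to \mu_{x,-\tau}F^\bullet$ for $\sigma \subset \tau$ is, in the combinatorial description of $\mu_x F^\bullet$ as a $(-\Sigma)$-polyhedral sheaf, the structure map in a combinatorial sheaf; on generators $\Theta(\upsilon,\chi)$ this is the identity on the $\sfR$ sitting over the face $\upsilon^\vee \cap \sigma^\perp$ whenever that face contains the smaller face $\upsilon^\vee \cap \tau^\perp$, and zero otherwise. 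On the coherent side the restriction along $X_\tau \hookrightarrow X_\sigma$ induces exactly the inclusion $\sfR[\upsilon^\vee \cap \tau^\perp \cap M] \hookrightarrow \sfR[\upsilon^\vee \cap \sigma^\perp \cap M]$ going the other way on sections, and the comparison of $x$-weight spaces matches the microlocal structure map.

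The main obstacle is the verification that these two restriction structures genuinely give the \emph{same} natural transformation rather than one differing by a sign or by a permutation of generators; this requires care because the natural transformation constructed by Proposition \ref{prop:nat} depends on the chosen identifications of $\Theta$ with its shift and on the orientation of $M_\bR$. Once one fixes the orientation convention used in Proposition \ref{prop:rgaacyc} and Proposition \ref{prop:mutheta}, the matchup on generators propagates through Proposition \ref{prop:nat}, and the compatibility with restriction maps $\mu_{x,-\sigma} \to \mu_{x,-\tau}$ versus $\RGa(X_\sigma,-) \to \RGa(X_\tau,-)$ follows from the corresponding commutative squares in the index category $\bGamma(\Sigma,M)$.
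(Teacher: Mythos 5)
Your proposal follows essentially the same route as the paper: reduce to generators via Proposition \ref{prop:nat}, compute $\mu_{x,-\sigma}\Theta(\tau,\chi)$ from Proposition \ref{prop:mutheta}, compute the weight-$x$ part of $\RGa(X_\sigma,\Theta'(\tau,\chi)|_{X_\sigma})$ directly, and match. The only thing you leave implicit that the paper makes explicit is the appeal to Theorem \ref{thm:fltz} to justify that the hypothesis ($F^\bullet$ compactly supported with $\SS(F)\subset\LS$) places $F^\bullet$ in the essential image of $h\ltr$, so that Proposition \ref{prop:nat} applies; you should add that sentence.
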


\begin{proof}
We will prove the theorem whenever $F^\bullet$ is a $\Theta$-complex and $\cE^\bullet$ is the corresponding $\Theta'$-complex.  That this includes the case where $F^\bullet$ has compact support and $\cE^\bullet$ is perfect is Theorem \ref{thm:fltz}.  Clearly we have natural isomorphisms
$$\RGa(X_\sigma,\Theta'(\tau,\chi)) = \Bigg\{
\begin{array}{ll}
\sfR & \text{if $\sigma \subset \tau$ and $x$ is in the closed face of $\chi + \tau^\vee$}\\
& \text{corresponding to $\sigma$}\\
0 & \text{otherwise}
\end{array}
$$
so to construct the natural isomorphism
$\mu_{x,-\sigma}(F^\bullet) \cong \RGa(X_\sigma,\cE^\bullet\vert_{X_\sigma})_x$ we simply apply propositions \ref{prop:mutheta} and \ref{prop:nat}.
\end{proof}

\subsection{Some examples of $\mu_x$}
\label{sec:seomx}

The most basic computation is the following: for $x \in \chi$, and after choosing an orientation of $M_\bR$, we have
\begin{equation}
\label{eq:3}
\mu_x(\Theta(\sigma,\chi))[\dim(M_\bR)] \cong \sfR_{-\sigma}
\end{equation} 
where the right-hand side denotes the constant sheaf on $-\sigma$, placed in homological degree zero and extended by zero to all of $N_\bR$.  This may be used to deduce all of the following examples:

\begin{example}
If $F$ is constant in a neighborhood of $x$, then $\mu_x(F)$ is a skyscraper sheaf at $0 \in T_x^* M_\bR$.  More precisely, $\mu_{x,0}(F) = F_x[-\dim(M_\bR)]$.
\end{example}

\begin{example}
\begin{enumerate}
 \item
If $F$ is constant on the open interval $(0,1)$, extended by zero to all of $\bR$, then $\mu_0 F$ is the constant sheaf on the nonpositive numbers it $T_0^* \bR \cong \bR$ and $\mu_1 F$ is the constant sheaf on the nonnegative numbers.  Again $\mu_x$ introduces a shift-by-$(-\dim(M_\bR))$.
\item 
If $F$ is constant on the closed interval $[0,1]$, then $\mu_0 F$ is constant on nonnegative numbers and $\mu_1 F$ is constant on nonpositive numbers.  This time $\mu_x$ introduces no shift.
\end{enumerate}
\end{example}

\begin{example}
\label{ex:weirdlocal}
In this example we consider a sheaf $F$ on $\bR^2$ supported on the union of the second and fourth quadrants.  The fourth sheaf of Example \ref{ex:hirz}, and the sheaf of Example \ref{ex:fujino}, are isomorphic in neighborhoods of their ``interesting'' points to this sheaf, at least after a linear change of coordinates.

Let $A$ denote the constant sheaf on the closed second quadrant, and let $B$ denote the constant sheaf on the interior of the fourth quadrant, extended by zero.  One may compute $\Ext^2(A,B) \cong \sfR$.  We may regard a generator $f \in \Ext^2(A,B)$ as a homomorphism $A \to B[2]$.  We let $F$ denote the cone on this map---it is isomorphic to $B[2]$ in the fourth quadrant, and to $A[1]$ in the second quadrant.

We may compute $\mu_0 F$ from the exact triangle
$$\mu_0 A \to \mu_0 B[2] \to \mu_0 F \to$$
From Equation \ref{eq:2} we have $\mu_0 B[2] \cong A$.  A similar computation shows that $\mu_0(A)$ is the constant sheaf on the interior of the second quadrant, extended by zero, and the map $\mu_0 A \to A$ is the canonical inclusion.  Thus the cone $\mu_0 F$ is the constant sheaf supported on $\{(x,0) \mid x \leq 0\} \cup \{(0,y) \mid y \geq 0\}$.
\end{example}

\subsection{Proof of Theorems \ref{thm:vecbund} and  \ref{thm:nefness}}
\label{sec:positivity}

In this section we will apply Theorem \ref{thm:mu} to give a second characterization of those sheaves coming from vector bundles (proving Theorem \ref{thm:vecbund}) and from nef vector bundles (proving Theorem \ref{thm:nefness}).  The second result uses a characterization of nefness for equivariant vector bundles due to Hering, Mustata, and Payne.


\begin{proof}[Proof of Theorem \ref{thm:vecbund}]
A perfect complex $\cE^\bullet$ is concentrated in degree zero if and only if its pulback to a resolution of singularities is concentrated in degree zero.  In \cite[Example 3.11]{fltz} it is shown that pulling back to a toric resolution of singularities has no effect on the associated constructible sheaf, so to prove the Theorem we may assume that $X$ is smooth.

Let $\sigma$ be a top-dimensional cone, and let $X_\sigma \in X$ be the associated $T$-fixed point.  By Theorem \ref{thm:mu}, the fiber of $\cE^\bullet$ at $X_\sigma$ is the direct sum over lattice points $x$ of microlocal stalks $\mu_{x,-\sigma}(\kappa(\cE^\bullet))$---in particular, condition (2) of Theorem \ref{thm:vecbund} holds if and only if the fiber of $\cE^\bullet$ at each $T$-fixed point is concentrated in degree zero.

To complete the proof we have to show that this latter condition is equivalent to $\cE^\bullet$ being concentrated in degree zero.  Since $X$ is smooth we may assume $X = \bA^n$, and since $\cE^\bullet$ is $T$-equivariant it is $\Gm$-equivariant for the standard $\Gm$-action on $\bA^n$.  By Koszul duality \cite{bgg} such a complex is  quasi-isomorphic to a vector bundle if and only if its fiber at $0$ is concentrated in degree zero.
\end{proof}

\begin{proof}[Proof of Theorem \ref{thm:nefness}]
For $\sigma$ a top-dimensional cone let $X_\sigma \in X$ be the associated fixed point, and for $\tau$ a codimension-one cone let $X_\tau$ denote the associated invariant curve. 
By the theorem of \cite{HMP}, $\cE$ is nef if and only if its restriction to each $X_\tau$ is nef.  We claim that this is the case if and only if $H^i(\cE\vert_{X_{\tau}})$ vanishes for $i < 0$ and $H^0(\cE\vert_{X_\tau}) \to \cE_{x_\sigma}$ is a surjection for every $\sigma \supset \tau$.  Indeed if the higher cohomology of $\cE\vert_{X_\tau}$ vanishes, then it cannot have a summand of the form $\cO(k)$ for $k < -1$, and if the surjectivity condition holds then it cannot have a summand of the form $\cO(-1)$.

A map of $T$-modules is surjective if and only if the induced map on $x$-weight spaces is surjective for every character $x:T \to \Gm$.  Thus the condition that $H^0(\cE\vert_{X_\tau}) \to \cE_{X_\sigma}$ is surjective for every $\sigma \supset \tau$ is equivalent to the condition that $H^0(\cE\vert_{X_\tau})_\chi \to \cE_{X_\sigma,x}$ is surjective for every $x$.  Theorem \ref{thm:mu} completes the proof. 
\end{proof}

\emph{Acknowledgments:}  I thank Bohan Fang, Melissa Liu, Sam Payne, David Speyer and Eric Zaslow for useful comments and corrections.

\end{document}